\newtheorem{thm}{Theorem}
\newtheorem{cor}{Corollary}
\newtheorem{lem}{Lemma}
\newcommand{\thmref}[1]{theorem~\ref{#1}}
\newcommand{\secref}[1]{section~\ref{#1}}
\numberwithin{equation}{section}
\renewcommand\a{\alpha}
\newcommand\g{\gamma}
\renewcommand\d{\delta}
\newcommand\e{\varepsilon}
\renewcommand\l{\lambda}
\newcommand\D{\Delta}
\renewcommand\D{\Delta}
\newcommand\G{\Gamma}
\newcommand\f{\frac}
\newcommand{\Z}{{\mathbb{Z}}}
\newcommand{\R}{{\mathbb{R}}}
\newcommand{\C}{{\mathbb{C}}}
\newcommand{\A}{{\mathbb{A}}}
\newcommand{\Q}{{\mathbb{Q}}}
\newcommand{\U}{{\mathbb{H}}}
\renewcommand\i{^{-1}}
\renewcommand\({\left(}
\renewcommand\){\right)}
\newcommand{\ttwo}[4]{
\(\begin{smallmatrix}{#1} & {#2}
\\ {#3} & {#4} \end{smallmatrix}\)}
\newcommand{\tthree}[9]{
\(\begin{smallmatrix}{#1} & {#2} & {#3}
\\ {#4} & {#5} & {#6} \\ {#7} & {#8} & {#9} \end{smallmatrix}\)}
\newcommand{\sgn}{\operatorname{sgn}}
\newcommand\srel[2]{\begin{smallmatrix} {#1} \\ {#2} \end{smallmatrix}}
\newcommand\nrel[2]{\begin{matrix} {#1} \\ {#2} \end{matrix}}
\newcommand{\gobble}[1]{}
  \newcommand{\rangeref}[2]{%
    \ref{#1}--\afterassignment\gobble\fam 0\ref{#2}%
  }
\def\imod#1{\allowbreak\mkern5mu({\operator@font mod}\,#1)}
\begin{document}

\title{Fourier coefficients of automorphic forms, character variety orbits, and small representations}

\date{April 15, 2012}

\author{Stephen D. Miller\thanks{Supported by NSF grant DMS-0901594.}~  and Siddhartha Sahi\\
Rutgers University\\
\tt{miller@math.rutgers.edu, sahi@math.rutgers.edu}}

\maketitle

\begin{abstract}

We consider the Fourier expansions of automorphic forms on general Lie groups, with a particular emphasis on exceptional groups.  After describing some principles underlying known results on $GL(n)$, $Sp(4)$, and $G_2$, we perform an analysis of the expansions on split real forms of $E_6$ and $E_7$ where simplifications take place for automorphic realizations of real representations which have small Gelfand-Kirillov dimension.  Though the character varieties are more complicated for exceptional groups, we explain how the nonvanishing Fourier coefficients for small representations behave analogously to Fourier coefficients on $GL(n)$.  We use this mechanism, for example, to show that the minimal representation of either $E_6$ or $E_7$ never occurs in the cuspidal automorphic spectrum.  We also give a complete description of the {\em internal Chevalley modules} of all complex Chevalley groups -- that is, the orbit decomposition of the Levi factor of a maximal parabolic  on  its unipotent radical.  This generalizes classical results on trivectors and in particular includes a full description of the complex character variety orbits for all maximal parabolics. The results of this paper have been applied in the string theory literature to the study of BPS instanton contributions to graviton scattering \cite{GMV}.

\end{abstract}

\section{Introduction}

The most common way to dissect a modular form is to take its Fourier expansion.  Any smooth function $f$ on the complex upper half plane $\U = \{x+i y | y >0 \}$ that is periodic in $x$ with period $r$ has an absolutely convergent Fourier series expansion
\begin{equation}\label{fourierseries1}
    f(x+iy) \ \ = \ \ \sum_{n\,\in\,\Z} a_n(y)\,e^{2\pi i n x/r}\,,
\end{equation}
with coefficients that depend on $y$.  The holomorphy of $f$ further demands that each coefficient $a_n(y)$ satisfy a first order differential equation, whose general solution is a scalar multiple of $e^{-2\pi i ny/r}$.
 The property that modular forms are bounded as $y\rightarrow\infty$ requires this solution to vanish if $n<0$, so $f$ furthermore has the form
 \begin{equation}\label{fourierseries2}
    f(x+iy) \ \ = \ \ \sum_{n\,\ge\,0} a_n\,e^{2\pi i n (x+iy)/r}
 \end{equation}
for some  coefficients $a_n \in \C$.
This is the well-known {\em $q$-expansion} of a classical holomorphic modular form.
Any expression of the form (\ref{fourierseries2}) is of course periodic; the modularity of $f$ is deeper and comes from identities satisfied by the $a_n$.  A similar argument applies to the non-holomorphic Maass forms, which are instead eigenfunctions of the non-euclidean laplacian $\D=-y^2(\f{\partial^2}{\partial x^2}+\f{\partial^2}{\partial y^2})$.  In this case a differential equation and boundedness condition is again used to pin down the coefficient $a_n(y)$ as a Bessel function times a scalar  coefficient $a_n$.

The Fourier expansions of classical, holomorphic modular forms reveal a tremendous amount of arithmetic information, such as Hecke eigenvalues and point counts of elliptic curves over varying finite fields.  They also play a crucial analytic role, as they completely determine the form  and provide the Dirichlet series coefficients for its $L$-functions.  Thanks to the work of Hecke \cite{Hecke}, Maass \cite{Maass}, Jacquet-Langlands \cite{jacquetlanglands}, and Atkin-Lehner \cite{atkinlehner}, there is now a very complete theory of  Fourier expansions for all $GL(2)$ automorphic forms, that is, the classical holomorphic modular forms, the non-holomorphic Maass forms, and Eisenstein series.

The expansion (\ref{fourierseries1}-\ref{fourierseries2}) can be interpreted group theoretically as follows. Suppose $F$ is now a function on a Lie group $G$ that is left invariant under a discrete subgroup $\G$.  This includes the case of classical modular forms for $G=SL(2,\R)$ or $GL(2,\R)$, by setting $F(g)=f(g\cdot i)$, where $g\cdot i=(\sgn\det g)\f{ai+b}{ci+d}$ is the point in the upper half plane mapped from $i$ by the fractional linear transformation corresponding to $g=\ttwo abcd$.
Suppose that $U$ is an abelian subgroup for which $\G\cap U\backslash U$ is compact  and consequently has finite volume (normalized to be 1) under its Haar measure $du$.  We may then   expand
\begin{equation}\label{fourierseries3}
    F(g) \ \ = \ \ \sum_{\chi \, \in \, {\mathfrak C}(\G\cap U)} F_\chi(g) \,,
\end{equation}
where ${\mathfrak C}(\G\cap U)$ is the group\footnote{Even though the notation does not explicitly reflect the ambient group $U$, we will primarily work with arithmetic subgroups $\G$ and unipotent groups $U$, making it possible to recover  $U$   by taking the Zariski closure of $\G\cap U$.}  of characters on $U$ which are trivial on $\G\cap U$, and
\begin{equation}\label{fourierseries4}
    F_\chi(g) \ \ = \ \ \int_{\G\cap U\backslash U} F(ug)\,\chi(u)\i\,du\,.
\end{equation}  Note that each term $F_\chi$ satisfies the transformation law
\begin{equation}\label{fourierseries5}
    F_\chi(ug) \ \ = \ \ \chi(u)\,F_\chi(g) \ \, , \ \ \ \ u\,\in\,U\,,\ g\,\in\,G\,,
\end{equation}
and is so  determined by its restriction to the quotient $U\backslash G$ -- much  like the functions $a_n(y)$ in (\ref{fourierseries1}) depend  only on a single real variable.  Thus (\ref{fourierseries4}) is the Fourier expansion of $F(ug)$ evaluated at $u=e$, and each $F_\chi$ is a Fourier coefficient.

In our example of the hyperbolic upper half plane $\U$, $U$ corresponds to the subgroup of $2\times 2$ unit upper triangular matrices, which is isomorphic to the real line $\R$;  $\G\cap U$ corresponds to $r\Z$ under this isomorphism.  The ability to write $F_\chi$ there in terms of an unknown scalar multiple of an explicit special function depended on a differential equation; in more modern terms, it has to do with dimensionality of certain functionals on a representation space.  In any event, it is a special circumstance that does not occur for every choice of abelian subgroup $U$:~for example,  (\ref{fourierseries3}) contains no information at all when $U$ is the trivial group.  This reflects the tension that -- even if one somehow relaxes the requirement that $U$ be abelian --  the larger $U$ is, the easier it may be to pin down functions satisfying (\ref{fourierseries5}) and a relevant differential equation, but the more complicated $U$ may become.  Of course (\ref{fourierseries3}) fails to hold without the assumption that $U$ is abelian:~in general the sum on the righthand side of (\ref{fourierseries3}) must be augmented by other terms coming from higher dimensional representations of $U$, even though each $F_\chi$ from (\ref{fourierseries4}) still makes sense (see (\ref{charexp1})).

As a manifestation of this tension, an expansion like (\ref{fourierseries2}) is hard to come by in most situations.
  A famous theorem, proven independently by Piatetski-Shapiro \cite{psmult} and Shalika \cite{shalika}, gives a type of Fourier expansion for cusp forms on $GL(n)$ by an inductive argument involving  abelian subgroups of the maximal unipotent subgroup $N=N_n=\{n\times n$~unit upper triangular matrices$\}$.  Their result is quite general but is somewhat cumbersome to state for congruence subgroups and number fields.  For that reason let us consider a cusp form $F$ for $GL(n,\Z)\backslash GL(n,\R)$, and let $N'=[N,N]$ denote the derived subgroup of $N$.  All characters of $N(\R)$ are trivial on $N'(\R)$, and so
  \begin{equation}\label{psshalexpn2}
    V(g) \ \ = \  \ \int_{N'(\Z)\backslash N'(\R)}F(ng)\,dn\
\end{equation}
represents the sum (\ref{fourierseries3}) over all  characters of $N_n(\R)$ that are trivial on $N_n(\Z)$.
This Fourier expansion is entirely analogous  to the $GL(2)$ expansion (\ref{fourierseries2}), but with coefficients indexed by $n-1$ integral parameters and a different special function (a ``Whittaker'' function) which we shall say more about in section~\ref{sec:PSshal}.  As such, $V(g)$ can be thought of as the contribution of the ``abelian'' terms in the Fourier expansion of $F(g)$ -- the ones that come from the abelianization of $N$.  However, unlike the case of $n=2$, in general the integration in (\ref{psshalexpn2}) loses information about $F(g)$.  Piatetski-Shapiro and Shalika proved that nevertheless
$F(g)$
   can be reconstructed as the sum of translates of $V$,
\begin{equation}\label{psshalexpn1}
    F(g) \ \ = \ \ \sum_{\g \in N_{n-1}(\Z)\backslash GL(n-1,\Z)} V\(\ttwo{\g}{}{}{1} g\)\,.
\end{equation}
Section~\ref{sec:PSshal} contains more details about the proof of this formula and its relation to Whittaker functions.

   The Piatetski-Shapiro/Shalika expansion has been extremely useful in the analytic theory of automorphic forms on $GL(n)$, perhaps most famously because it allows one to reconstruct a form in terms of its abelian Fourier coefficients -- in particular, coefficients which have a direct arithmetic nature, e.g., are directly related to  $L$-function data.   Such a result is a particularly friendly special feature of  the general linear group that is absent in the general situation; this is because their argument  relies on an essential special fact about character variety orbits in $GL(n)$.
However, even though their statement does not generalize and the orbit structure becomes considerably more complicated, one can still derive important pieces of the Fourier expansion of an automorphic form from their approach. In this paper we give a generalization of the Piatetski-Shapiro/Shalika expansion to automorphic forms on arbitrary reductive algebraic groups.  This generalization is rarely as precise as (\ref{psshalexpn1}), but we indicate some conditions (on both the group and the cusp form) under which it simplifies to have a comparable form, or at least a useful enough form for some applications.  More  detailed Fourier expansions have been given in a number of particular examples by earlier authors, which we try to review in \secref{sec:characterexpansions}.  The study of Fourier coefficients of automorphic forms is a very rich subject which is too broad to recount here.  Our main goal is to say something general for a broad range of groups, including details for exceptional groups.  We would like to mention the recent paper \cite{ginzhund}, which takes a complementary approach on exceptional groups.

  In a different direction, we apply results of Matumoto \cite{matumoto} to show that many of the Fourier coefficients $F_\chi$ from (\ref{fourierseries4})  vanish if the archimedean component of its automorphic representation is ``small'' in the sense of having a small wavefront set.  These results are analogs of a related nonarchimedean vanishing theorem of M\oe glin-Waldspuger \cite{Mowa}.  We give a detailed analysis for maximal parabolic subgroups of exceptional groups in section~\ref{sec:tables}.  These results are in turn used in \cite[\S6]{GMV} to verify string-theoretic conjectures about the vanishing of certain Fourier coefficients of automorphic forms (without having to explicitly compute them).
  Indeed, providing background results for the investigation in \cite{GMV} was a primary motivation for writing this paper.
  However, we also pursue some more general statements that are perhaps of wider interest internally to  automorphic forms.  For example, our methods show the following:

  \begin{thm}\label{ournewtheorem} Let $G$ denote a split Chevalley group  of type $E_6$ or $E_7$, and let  $\pi$ be an (adelic) automorphic representation of $G$ over a number field $k$ for which at least one component $\pi_v$ is a minimal representation of $G(k_v)$  -- that is, the wavefront set of $\pi_v$ is the closure of the smallest nontrivial coadjoint nilpotent orbit.  Fix a choice of positive roots and let $N$ be the maximal unipotent $k$-subgroup of $G$ generated by their root vectors. Then the vectors $F$ in the representation space for $\pi$ are completely determined by the degenerate Whittaker integrals
\begin{equation}\label{degenwhitint}
    \int_{N(k)\backslash N(\A_k)} F(ng)\,\psi(n)\i\,dn\,,
\end{equation}
in which   $\psi:N(\A_k)\rightarrow\C$  is trivial on $N(k)$ and on all but at most a single one-parameter subgroup corresponding to a simple positive root.
  \end{thm}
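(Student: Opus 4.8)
The plan is to prove the equivalent assertion that the map sending $F$ to the tuple of its integrals \eqref{degenwhitint} -- over the trivial character, and over the characters $\psi_{\a}$ attached to the simple roots $\a$ -- is injective on the automorphic realization of $\pi$; that is, if all of these integrals vanish identically in $g$, then $F=0$. The first step is to invoke the general Fourier expansion of $F$ along $N$ developed in \secref{sec:characterexpansions} (cf.\ \eqref{charexp1}). Because $N$ is nonabelian this is not merely a sum over characters of $N$: organized along the descending central series of $N$, it is a sum over the unitary dual of $N$, so by Kirillov's orbit method each summand is attached to a coadjoint $N$-orbit in $\fn^{*}$ and hence, after $\Ad(G)$-saturation, to a nilpotent orbit $\O\subseteq\fg$. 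The summand attached to $\O=\{0\}$ is the constant term $\int_{N(k)\backslash N(\A_{k})}F(ng)\,dn$, the $\psi\equiv 1$ case of \eqref{degenwhitint}; since $F$ has moderate growth the expansion reconstructs $F$.

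The decisive input is the vanishing theorem of \secref{sec:tables}, deduced from Matumoto \cite{matumoto}: if the summand attached to $\O$ did not vanish identically, then -- writing $\pi\cong\bigotimes'_{w}\pi_{w}$, evaluating the resulting functional on a pure tensor, and restricting to the factor at $v$ -- $\pi_{v}$ would admit a nonzero degenerate Whittaker functional associated to $\O$, forcing $\O\subseteq\operatorname{WF}(\pi_{v})=\overline{\O_{\min}}$. Since $\overline{\O_{\min}}=\O_{\min}\cup\{0\}$, every summand vanishes except the constant term and those attached to the single orbit $\O_{\min}$. In particular the abelian coefficient $F_{\psi}$ of $F$ along $N/[N,N]$ vanishes whenever $\psi$ is nontrivial on two or more simple root subgroups, since such a $\psi$ is attached to an element $\sum_{\a\in S}c_{\a}X_{\a}$ with $|S|\ge 2$ and all $c_{\a}\ne 0$, which is regular nilpotent in the Levi subalgebra spanned by $S$ and so lies in an orbit -- of Bala--Carter type $A_{2}$ or $2A_{1}$ -- strictly larger than $\O_{\min}$; when $|S|=1$ the element is a long root vector lying in $\O_{\min}$, which is exactly the case kept in \eqref{degenwhitint}. (The same remark kills the ordinary Whittaker coefficient, which is attached to the regular orbit.)

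It remains to account for the summands attached to $\O_{\min}$ itself. Each is, up to the inner constant-term integrations built into the expansion, a Fourier coefficient of $F$ along a one-dimensional root subgroup $U_{\gamma}$ with $X_{\gamma}\in\O_{\min}$, i.e.\ $\gamma$ an arbitrary positive root, including roots of height $\ge 2$ lying in $[N,N]$. Because $E_{6}$ and $E_{7}$ are simply laced, the Weyl group acts transitively on roots, so $\gamma=w\a$ for a simple root $\a$ and a Weyl element $w$; choosing a lift $\dot w\in G(k)$ of $w$, left $G(k)$-invariance of $F$ identifies the $U_{\gamma}$-coefficient at $g$ with the $U_{\a}$-coefficient at $\dot w^{-1}g$. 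The wavefront constraint then forces \emph{that} coefficient to coincide with its own constant term along the complementary directions of $N$ -- its deeper coefficients being attached to orbits strictly larger than $\O_{\min}$ -- and this constant term is precisely the integral \eqref{degenwhitint} with $\psi=\psi_{\a}$, evaluated at $\dot w^{-1}g$ (or, after re-assembling the rational translates dictated by the expansion, at a $G(k)$-translate of $g$). Hence if all the integrals \eqref{degenwhitint} vanish at every argument, the constant term and every $\O_{\min}$-summand vanish, so $F=0$.

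I expect the main obstacle to lie in making the last two paragraphs precise: in the passage from coadjoint orbits of the nonabelian $N$ to $\Ad(G)$-orbits, and in ruling out any ``accidental'' configuration of root vectors that lands in $\O_{\min}$ besides single root vectors. This is exactly where the explicit orbit decomposition of the internal Chevalley modules and the complex character varieties of $E_{6}$ and $E_{7}$ carried out in this paper is used: one must check that the only coadjoint $N$-orbits whose $\Ad(G)$-saturation lies in $\overline{\O_{\min}}$ are the trivial one and the ``rank-one'' family, each member of the latter being a $G(k)$-translate of the highest-root character $\psi_{\theta}$, so that the entire part of the expansion of $F$ surviving the wavefront constraint reduces, by conjugation and a Piatetski-Shapiro/Shalika-style unfolding, to the degenerate Whittaker integrals \eqref{degenwhitint}. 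The remaining ingredients -- convergence of \eqref{charexp1}, the Matumoto-type vanishing, and the restriction of a global functional to $\pi_{v}$ -- come from \secref{sec:characterexpansions} and \secref{sec:tables}.
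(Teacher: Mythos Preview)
Your approach is genuinely different from the paper's, and in its current form has a real gap.

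The paper does \emph{not} expand $F$ along the full nonabelian $N$. Instead it runs an induction down the chain of Levi subgroups
\[
E_7 \ \supset \ E_6 \ \supset \ D_5 \ \supset \ A_4 \ \supset \ A_2 \times A_1 \ \supset \ A_1 \times A_1 \ \supset \ A_1
\]
obtained by successively deleting the last node. At each stage one expands along the \emph{abelian} unipotent radical $U$ of the maximal parabolic $P(\{\a_n\})$. Because $U$ is abelian, \eqref{charexp1} is a genuine sum over characters and Matumoto's theorem (which concerns linear functionals $\l$, i.e., characters) applies directly: for minimal $\pi_v$, only the trivial character and those in the smallest nontrivial $L(\C)$-orbit survive, and in the simply-laced case the latter are all $L(k)$-equivalent to $\psi|_U$. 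For the trivial character one descends to the Levi. For $\chi=\psi|_U$, the key ingredient is \lemref{lem:lemma1}: if $0\ne X\in\fn'$ (the positive root vectors of the Levi) then $X+X_{\a_n}\notin\O_{\min}$, proved by explicit matrix computations in the $27$- and $56$-dimensional representations. This shows that the further expansion of $F_\chi$ along the next abelian radical has only trivial characters --- the minimal orbit has been ``used up'' --- and one descends again. At the bottom one is left with the integrals \eqref{degenwhitint}.

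The gap in your sketch is precisely the passage you flag. You invoke a Kirillov-type decomposition of $F$ along $N$ indexed by coadjoint $N$-orbits and then apply Matumoto summand by summand. But Matumoto's theorem as formulated here gives vanishing of $F_\chi$ for \emph{characters} $\chi$ of unipotent subgroups, not for the higher-dimensional summands of $L^2(N(k)\backslash N(\A_k))$ attached to non-point coadjoint orbits; you have not explained how the vanishing input reaches those pieces. Your Weyl-conjugation trick for moving a root $\g$ to a simple root is correct as an identity, but the ``$U_\g$-coefficient'' is not a summand of any expansion you have actually written down --- a central-series expansion produces characters of successive abelian quotients, each involving many roots at once, not single-root coefficients. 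The paper's abelian-radical induction is engineered to avoid exactly this problem: at every step one integrates only against characters of an abelian group, and \lemref{lem:lemma1} --- which is your anticipated ``no accidental configuration in $\O_{\min}$'' check, but localized to the specific pair $(\fn',X_{\a_n})$ rather than stated globally for $N$ --- replaces any appeal to the coadjoint geometry of $N$.
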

In fact, the argument gives a formula for $F$ analogous to (\ref{psshalexpn1}) for these automorphic realizations of minimal representations (see \cite{kazhpoli} for a different formula, which also extends to $E_8$).  If the nonarchimedean multiplicity one results from \cite{savin} were generalized to archimedean fields, it furthermore would give a global multiplicity one theorem for these automorphic minimal representations. Each character $\psi$ in the statement of the theorem is  trivial on the unipotent radical of a proper parabolic subgroup, namely one which contains $N$ and whose Levi component contains the one-parameter subgroup that $\psi$ does not vanish on.  The integration in (\ref{degenwhitint}) then factors over this unipotent radical; by definition, it   vanishes when $F$ is a cusp form.  Therefore we conclude:
\begin{cor}\label{nocuspidalminimalaut}
There are no cuspidal automorphic representations of Chevalley groups of type $E_6$ or $E_7$ which have a minimal local component.  In particular, the Gelfand-Kirillov dimension (which is half the dimension of the wavefront set) of any component of a cuspidal automorphic representation of a Chevalley group of type $E_6$ or $E_7$ must be at least 16 or 26, respectively.
\end{cor}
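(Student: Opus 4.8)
The plan is to read the corollary off of Theorem~\ref{ournewtheorem} together with the defining vanishing property of cusp forms, and then to repackage the resulting nonexistence statement as the stated numerical bound using the list of small nilpotent orbits of $E_6$ and $E_7$.

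For the first assertion I would argue by contradiction. Suppose $\pi$ is a cuspidal automorphic representation of a Chevalley group $G$ of type $E_6$ or $E_7$ possessing a minimal component $\pi_v$ at some place $v$, and let $F\neq 0$ lie in the space of automorphic forms realizing $\pi$. Theorem~\ref{ournewtheorem} applies and tells us that $F$ is completely determined by the degenerate Whittaker integrals \eqref{degenwhitint} as $\psi$ ranges over the characters of $N(\A_k)$ trivial on $N(k)$ and on all but at most one simple-root subgroup. As explained immediately after that theorem, each such $\psi$ is trivial on the unipotent radical $U_P$ of some proper standard parabolic $P\supseteq N$ -- a maximal parabolic whose Levi factor contains the one simple-root subgroup on which $\psi$ is nontrivial, or the standard Borel if $\psi$ is trivial on all of $N$. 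Hence \eqref{degenwhitint} factors through the period $\int_{U_P(k)\backslash U_P(\A_k)}F(ug)\,du$, i.e.\ through the constant term of $F$ along $P$; since $P$ is proper and $F$ is cuspidal this constant term vanishes identically. Thus every integral \eqref{degenwhitint} is zero, and Theorem~\ref{ournewtheorem} forces $F\equiv 0$, contradicting that $\pi$ is a nonzero automorphic representation. So no cuspidal automorphic representation has a minimal component at any place.

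For the ``in particular'' clause I would reason as follows. The Gelfand--Kirillov dimension of a component $\pi_v$ equals half the dimension of its wavefront set, which is a closed union of coadjoint nilpotent orbits and so has dimension $2\dim\mathcal O$ for the largest nilpotent orbit $\mathcal O$ occurring in it; hence $\operatorname{GK}(\pi_v)=\dim\mathcal O$. The nilpotent orbits of $E_6$ have dimensions $0,22,32,\dots$ and those of $E_7$ have dimensions $0,34,52,\dots$ (Bala--Carter classification; see also \secref{sec:tables}), so the only values of $\operatorname{GK}(\pi_v)$ below $16$ (type $E_6$) or below $26$ (type $E_7$) are $0$ and $11$, respectively $0$ and $17$. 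If $\operatorname{GK}(\pi_v)=11$ (resp.\ $17$) then its wavefront set is the closure of the unique orbit of dimension $22$ (resp.\ $34$), i.e.\ $\pi_v$ is minimal, which the first part excludes for cuspidal $\pi$. If $\operatorname{GK}(\pi_v)=0$ then $\pi_v$ is finite-dimensional, hence one-dimensional, which forces $\pi$ to be one-dimensional and thus (having nonvanishing constant term) not cuspidal. The remaining possibilities are $\operatorname{GK}(\pi_v)\ge 16$ for $E_6$ and $\ge 26$ for $E_7$, as claimed.

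The entire weight of the argument rests on Theorem~\ref{ournewtheorem}; once that is granted the corollary is essentially bookkeeping, and I do not expect any serious obstacle in the steps above. The only genuinely external inputs are the identification -- built into the statement of the theorem -- of each $\psi$ in \eqref{degenwhitint} as trivial on the unipotent radical of a proper parabolic (so that cuspidality alone annihilates the integral), and the classical facts that there is no nilpotent orbit of $E_6$ or $E_7$ strictly between $\{0\}$ and the minimal orbit and that the orbit immediately above the minimal one has dimension $32$ (resp.\ $52$). The one place where a little care is needed is to confirm that \eqref{degenwhitint} unfolds over a \emph{single} proper parabolic's unipotent radical, so that one application of the cusp condition suffices; this is immediate from the description of the admissible $\psi$.
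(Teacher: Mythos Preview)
Your argument is correct and matches the paper's own reasoning, which appears in the paragraph immediately preceding the corollary: each admissible $\psi$ is trivial on the unipotent radical of a proper parabolic, so \eqref{degenwhitint} factors through a constant term and vanishes by cuspidality, forcing $F\equiv 0$ via Theorem~\ref{ournewtheorem}. Your additional remarks justifying the numerical bounds $16$ and $26$ (via the gap between the minimal orbit and the next orbit in the Bala--Carter classification, plus the easy exclusion of the trivial orbit) spell out details the paper leaves implicit, but the approach is identical.
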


Piatetski-Shapiro raised the   question as to what the smallest Gelfand-Kirillov dimension can be for a cuspidal automorphic representation of a given group (see \cite{jsli} for results in the symplectic case).   A folklore conjecture asserts that the wavefront set of any component must be the closure of a {\em distinguished} orbit:~one which does not intersect any proper Levi subgroup.  This would replace the lower bounds in the corollary by 21 and 33, respectively.
 A related analysis can be provided for slightly larger representations, but with weaker conclusions.

 One of the main ingredients in our results is the full complex orbit structure of the adjoint action of the  Levi component of a maximal parabolic subgroup on the Lie algebra of its nilradical.  These Levi actions are known as {\em internal Chevalley modules} and were classified completely when the unipotent radical is abelian \cite{rrs} or Heisenberg   \cite{rohrle}.  They are well-known for classical groups through classical rank theory.   For the general exceptional group case, we used computer programs of Littelmann \cite{littelmann} and de Graaf \cite{deG} to arrive at the following:

  \begin{thm}\label{thm:internal}
  Let $\frak g$ be a complex exceptional  simple Lie algebra, $G$ a complex Lie group with Lie algebra $\frak g$, and $P$ a maximal parabolic subgroup with unipotent radical $U$.  Decompose the Lie algebra $\frak u$ of $U$ as a direct sum ${\frak u}=\oplus{\frak u}_i$ of irreducible subspaces for the adjoint action of a Levi component $L$ of $P$.  Then all complex orbits of $L$ on ${\frak u}$ are explicitly listed in the tables of \secref{sec:tables}, along with the adjoint nilpotent orbits of $G$ in $\frak g$ in which they are contained.
  \end{thm}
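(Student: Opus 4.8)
The plan is to reduce Theorem~\ref{thm:internal} to a finite case-by-case verification, one case for each node of each of the five exceptional Dynkin diagrams (up to the diagram automorphism of $E_6$), since a maximal parabolic is the same datum as a choice of simple root $\a$ to omit from the Levi. Fixing such an $\a$, the integer ``$\a$-height'' defines a $\Z$-grading $\fg=\bigoplus_k\fg_k$, where $\fg_k$ is the span of the root spaces $\fg_\b$ for which the coefficient of $\a$ in $\b$ equals $k$ (with $\fh\subset\fg_0$); then $\fl=\fg_0$ is a Levi component of $P$, $\fu=\bigoplus_{k>0}\fg_k$, and each $\fu_i:=\fg_i$ is an irreducible $L$-module whose highest weight is read off directly from the roots. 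So the first step is purely combinatorial: for each node, tabulate the grading, the semisimple type of $L$ (together with the one-dimensional center acting by a known character on each $\fg_k$), and the list of modules $\fu_i$. This is exactly the sort of root-system bookkeeping carried out by Littelmann's program \cite{littelmann}.

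The central step is, within each case, to enumerate the $L$-orbits on $\fu$ and to identify for each the nilpotent $G$-orbit of $\fg$ containing it. The key structural fact is that every element of $\fu$ is $\mathrm{ad}$-nilpotent in $\fg$, so every $L$-orbit lies in a unique nilpotent $G$-orbit $\mathcal O$, and $\mathcal O\cap\fu$ is a union of $L$-orbits; running over the (finitely many, classified) nilpotent orbits of $\fg$ therefore organizes the search. Given a candidate representative $X\in\fu$, one determines its $G$-orbit by constructing an $\mathfrak{sl}_2$-triple through $X$ and reading off the resulting weighted Dynkin diagram, i.e.\ the Bala--Carter label; one determines the dimension of its $L$-orbit from $\dim L\!\cdot\! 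X=\dim\fl-\dim\{Y\in\fl:[Y,X]=0\}$, a linear-algebra computation. These are precisely the computations that de Graaf's software \cite{deG} performs. When $\fu$ is abelian ($\fu=\fg_1$) or Heisenberg ($\fu=\fg_1\oplus\fg_2$ with $\dim\fg_2=1$) the orbit list is already available from \cite{rrs} and \cite{rohrle}; for the classical-type Levi actions -- most notably the trivector modules $\wedge^3$ of a $7$- or $8$-dimensional space attached to the $GL$-type Levis in $E_7$ and $E_8$ -- one matches against the classical literature; the remaining cases are handled directly by the above procedure, and one records the containing $G$-orbit alongside each $L$-orbit.

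It then remains to prove \emph{completeness}: that the representatives found exhaust $\fu$. For this one checks that the union of the closures of the listed orbits has dimension $\dim\fu$ -- the expected dimension of the dense orbit being supplied by Richardson's theorem -- and that each lower stratum is covered by exhibiting the degenerations between listed orbits; equivalently, that a generic point of each $\fu$ lies in the largest listed orbit and that no stratum is missed. In the cases where $L$ acts with finitely many orbits this is a finite verification; in any case where continuous moduli appear, the tables of \secref{sec:tables} record the parametrizing families rather than individual orbits. I expect the main obstacle to be precisely this completeness bookkeeping together with the correct assignment of Bala--Carter labels: in the exceptional groups distinct nilpotent orbits can share many invariants, so one must take care to separate orbits of equal dimension (using, say, the reductive part of the centralizer or the full weighted Dynkin diagram, not dimension alone), and one must make sure that passing between $L$ and its derived group -- which differ by the central torus acting by scalars on each $\fg_k$ -- does not spuriously merge or split orbits. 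No single computation here is deep, but their number is large, which is why the verification is delegated to \cite{littelmann} and \cite{deG}; the content of the theorem is the organization of the output into the uniform tables of \secref{sec:tables} and the systematic cross-identification with the adjoint nilpotent orbits of $G$.
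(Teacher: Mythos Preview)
Your proposal is essentially correct and close in spirit to the paper's approach---both reduce the theorem to a computer-assisted case-by-case tabulation using the software of Littelmann and de~Graaf---but you are missing the paper's main organizational device, and a couple of your side remarks are off.

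The paper's key observation (attributed to \cite{shahidi}) is that for each maximal parabolic, every higher graded piece $\fu_i$ with $i>1$ reappears as the \emph{first} graded piece $\fu_1$ of an internal Chevalley module for a maximal parabolic of a \emph{smaller-rank} group. Since Littelmann's program computes orbits on $\fu_1$ only, this reduction is what allows the paper to cover all graded pieces uniformly: one never computes directly on $\fu_i$ for $i>1$, but instead looks up the answer in the table for the appropriate lower-rank case. Your proposal treats each $\fu_i$ on its own via $\mathfrak{sl}_2$-triples and centralizer dimensions, which would also work, but it forgoes this inductive economy and the cross-checks it provides.

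Two smaller points. First, your hedge about ``continuous moduli'' is unnecessary: the $L$-action on each $\fu_i$ is a graded piece of a $\Z$-grading, hence a $\theta$-group representation in Vinberg's sense, and these are known to have finitely many orbits (indeed the paper states this finiteness outright). There are no parametrizing families in the tables. Second, Richardson's theorem gives a dense $P$-orbit on $\fu$, not a dense $L$-orbit on an individual $\fu_i$; the existence of an open $L$-orbit on $\fu_1$ is a separate (though related) statement about prehomogeneous vector spaces, so your completeness argument should not lean on Richardson in the form you cite.
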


Section~\ref{sec:characterexpansions} gives a general framework for Fourier expansions, and shows how several known results can be seen as specializations.  In section~\ref{sec:matumotosec} we explain local results of Matumoto and M\oe glin-Waldspurger on the vanishing of Fourier coefficients for certain types of representations.
Theorem~\ref{ournewtheorem} is proven in section~\ref{sec:newthmpf}, along with information from the tables in section~\ref{sec:tables}.  Section~\ref{sec:tables} also contains the list of orbits in theorem~\ref{thm:internal}.

It is a pleasure to acknowledge Manjul Bhargava, Roe Goodman, Dmitry Gourevitch,  Michael Green, Dick Gross, Roger Howe, Joseph Hundley, Erez Lapid, Ross Lawther, Peter Littelmann,  Colette M\oe glin, Gerhard R\"orhle, Gordan Savin, Wilfried Schmid, Takashi Taniguchi, Pierre Vanhove, Jean-Loup Waldspurger, and Nolan Wallach for their valuable conversations.  In particular we would like to thank Peter Littelmann and Pierre Vanhove for their assistance in generating the tables in \secref{sec:tables}.

\section{Character Expansions}\label{sec:characterexpansions}

Let $G_{\text{lin}}$ denote the real points of a connected reductive linear algebraic group defined over $\Q$, and let  $G$ be a finite cover of $G_{\text{lin}}$  which is  a central extension of $G_{\text{lin}}$ by a finite abelian group.
Let $\G\subset G$ be an arithmetic subgroup, meaning that it is commensurate with the inverse image of $G_{\text{lin}}\cap GL(N,\Z)$ in $G$, where $G_{\text{lin}}$ is realized as a subgroup of $GL(N)$ (compatibly with its natural $\Q$-structure).
 Likewise, let $G_\Q$ denote the inverse image of $G_{\text{lin}}(\Q)=G_{\text{lin}}\cap GL(N,\Q)$ in $G$.   All unipotent subgroups of $G_{\text{lin}}$ split over the finite cover and so can be identified with subgroups of $G$.  If $U$ is defined over $\Q$, as we shall now assume, then the arithmetic subgroup $\G$ intersects both $U$  and its derived subgroup $U'=[U,U]$  in cocompact lattices.

 Let ${\mathfrak C}(\Gamma\cap U)$ denote the group of characters of $U$ which are trivial on $\Gamma\cap U$, which we refer to as the integral points of the {\em character variety} of $U$.  Any function $F\in C^\infty(\G\backslash G)$ has the Fourier expansion
\begin{equation}\label{charexp1}
    \Pi_UF(g) \ \ = \ \ \sum_{\chi\,\in\,{\mathfrak C}(\G\cap U)} F_\chi(g)
\end{equation}
generalizing (\ref{fourierseries3}), where $F_\chi(g)$ is defined exactly as in (\ref{fourierseries4})   and $\Pi_U:C^\infty((\G\cap U)\backslash G)\rightarrow C^\infty(((\G\cap U)\cdot U')\backslash G)$ denotes the projection operator
\begin{equation}\label{charexp2}
    \Pi_UF(g) \ \ = \ \ \int_{\G\cap U'\backslash U'}F(ug)\,du\,.
\end{equation}
This is because the Fourier series expansion is valid on functions  on the quotient $U/[U,U]$, the abelianization of $U$.

Let $H$ denote the normalizer of $U$ in $G$,
\begin{equation}\label{charexp3}
    H \ \ = \ \ \{ g \,\in\,G \, | \, gU=Ug \}\,.
\end{equation}
Then $H$ is defined over $\Q$  and $\G\cap H$ is an arithmetic subgroup of $H$. The group $\G\cap H$ acts on $U$ by conjugation, and dually on  ${\mathfrak C}(\G\cap U)$ by
\begin{equation}\label{actiononchars}
h:\chi(u) \ \ \mapsto \ \  \chi(h\i uh)\,.
\end{equation}
 Let ${\mathcal O}(\G\cap H,\G\cap U)$ denote the set of orbits of this action, and ${\mathcal C}(\G\cap H,\G\cap U)$ denote a fixed set of base points for these orbits.  Note that if $\chi$ and $\chi'(u)=\chi(h\i uh)$ are in the same orbit, then
 \begin{equation}\label{charexp4}
 \aligned
    F_{\chi'}(g) \ \ & =  \ \ \int_{\G\cap U\backslash U} F(ug)\,\chi(h\i uh)\i\,du \\
    & = \ \  \int_{\G\cap U\backslash U} F(h uh\i g)\,\chi(u)\i\,du \\
    & = \ \ F_{\chi}(h\i g)\,.
    \endaligned
 \end{equation}
 Here we have used the invariance of $F$ under $h\in \G\cap H$,  as well as the fact that conjugation by an arithmetic subgroup of $H$ leaves the $du$ invariant.
 In particular, this computation shows that $F_\chi(g)$ is automorphic under $(\G\cap H)_\chi$, the stabilizer of $\chi$ in $\G\cap H$.  Furthermore,   the stabilizers
  \begin{equation}\label{Hchistabdef}
    H_\chi \ \ = \ \ \{  \,h\,\in\,H \,\mid \, \chi(u)\,=\,\chi(h\i u h)\,\} \ \ \subset \ \ H
  \end{equation}    of characters $\chi\in {\mathfrak C}(\G\cap U)$    are also defined over $\Q$, and have $(\G\cap H)_\chi$ as arithmetic subgroups.

  We may hence rewrite (\ref{charexp1}) as
  \begin{equation}\label{charexp5}
   \Pi_UF(g) \ \ = \ \ \sum_{\chi\, \in \,{\mathcal C}(\G\cap H,\G\cap U)   } \sum_{\ h\in (\G\cap H)_\chi\backslash (\G\cap H)} F_\chi(hg)\,.
  \end{equation}
  Let ${\mathcal O}(H,\G\cap U)$ denote the  equivalence classes  of characters in  ${\mathcal O}(\G\cap H,\G\cap U)$  under the action (\ref{actiononchars}) of the complexification $H(\C)$ of $H$.  We refer to   ${\mathcal O}(H,\G\cap U)$ as  {\em complex orbits} and ${\mathcal O}(\G\cap H,\G\cap U)$ as  {\em integral orbits}.   A complex orbit groups  integral orbits into characters which have a similar algebraic nature, even though they may not be equivalent under the action of the discrete subgroup $\G\cap H$.  The expression
  \begin{equation}\label{charexp6}
 \Pi_UF(g) \ \ = \ \ \sum_{{\mathfrak o}\,\in\,{\mathcal O}(H,\G\cap U)}
 \sum_{\ \chi\,\in\,{\mathfrak o} \, }
     \sum_{\,h\in (\G\cap H)_\chi\backslash (\G\cap H)} F_\chi(hg)
  \end{equation}
  packages the terms more usefully, because certain  properties of the Fourier coefficients (e.g., vanishing) are often controlled by the complex orbits ${\mathcal O}(H,\G\cap U)$ rather than the individual orbits $\frak o$.  Furthermore, the Fourier coefficients within an orbit ${\frak o} \in {\mathcal O}(H,\G\cap U)$ are sometimes related by an external mechanism, such as the action of Hecke operators.   The complex orbits  ${\mathcal O}(H,\G\cap U)$ have been classified in many cases, and are often finite in number; this is in particular the case in the important example where $U$ is the unipotent radical of a maximal parabolic subgroup of a split Chevalley group.  This classification is well known for classical groups and given in section~\ref{sec:tables} for the five exceptional groups.

The derivation of  formula (\ref{charexp6}) can be iterated further by   using the fact that $F_\chi(hg)$ is an automorphic function on $(\G\cap H)_\chi\backslash H_\chi$.   This gives a further refinement, though it will necessarily  lose some information if nonabelian unipotent subgroups of $H$ are used.
  In the rest of this  section we describe  some important instances of (\ref{charexp6}) and this iterative refinement that have appeared in the literature, in particular covering the rank 2 Chevalley groups.

 \subsection{Example:~The Piatetski-Shapiro/Shalika expansion on $GL(n)$}\label{sec:PSshal}

We begin by stating a motivating geometric fact:~for any field $K$ and integer $m\ge 1$,
\begin{equation}\label{glnorbitfact}
    \text{$GL(m,K)$ acts on $K^m$ with two orbits, namely $\{0\}$ and $K^m-\{0\}$.}
\end{equation}
In more pedestrian terms,  any nonzero vector can be extended to a basis of $K^m$.  As we shall see, this furnishes a rare situation where the set of complex orbits ${\mathcal O}(H,\G\cap U)$ has only two elements, which   makes for an elegant Fourier expansion on $GL(n)$.  The Fourier expansion on a general group often includes similar terms in addition to more complicated ones not present here.

We shall now explain how Piatetski-Shapiro and Shalika derived (\ref{psshalexpn1}) from an iteration of the principle behind (\ref{charexp6}).
Let $G=SL(n,\R)$ and $\G=SL(n,\Z)$.  Let $P=P_n$ be the standard parabolic subgroup of $G$ of type $(n-1,1)$
and $U=U_n\simeq \R^{n-1}$ its unipotent radical
\begin{equation}\label{Udef}
    U \ \ = \ \ \left\{\, u \ = \
\ttwo{I_{n-1}}{\srel{u_1}{\srel{u_2}{\srel{\vdots}{u_{n-1}}}}}{0}{1} \ | \ u_1,\ldots,u_{n-1}\,\in\,\R\,
    \right\}.
\end{equation}
The normalizer $H$ of $U$ coincides with $P$.  Since $P$ factors as $LU$, where $L\simeq GL(n-1,\R)$ is its Levi component, $(\G\cap H)_\chi\backslash (\G\cap H)$ is in bijective correspondence with $L_\chi\backslash L$, where $L_\chi$ is the stabilizer of $\chi$ in $L$.  Since ${\cal C}(\G\cap U)$ is isomorphic to the lattice $\Z^m$,
the orbit statement (\ref{glnorbitfact}) indicates that  the complexification of $L$, $GL(n-1,\C)$, breaks up  ${\mathcal O}(H,\G\cap U)$ into two complex orbits:~the orbit consisting only of the trivial character, and its complement.  The $\Z$-structure in this case is also  easy to work out using the theory of elementary divisors -- $GL(m,\Z)$ acts on $\Z^m$ with orbits of the form $\{(v_1,\ldots,v_m)\in\Z^m|\gcd(v_1,\ldots,v_m)=d\}$, indexed by $d\in\Z_{\ge 0}$.  Putting this together we see that ${\mathcal O}(\G\cap H,\G\cap U)$ has orbit representatives given by the  characters  $\chi_k(u)=e^{2\pi i k u_{n-1}}$ (in terms of the parameterization (\ref{Udef})), one for each $k\in \Z_{\ge 0}$.

The character $\chi_0$ is trivial and $L_{\chi_0}=L$.    However, for $k\neq 0$ the stabilizer of $L_\chi$ is isomorphic to the quotient of the parabolic subgroup $P_{n-2}\subset GL(n-1,\R)$ by its center.
Then (\ref{charexp6}) specializes to
\begin{equation}\label{charexpgln1}
    F(g) \ \ = \ \ \int_{U(\Z)\backslash U(\R)}F(ug)\,du \ + \
    \sum_{k\,=\,1}^\infty \sum_{\g \,\in\,P_{n-2}(\Z)\backslash P_{n-1}(\Z)}V_k(\g g)\,,
\end{equation} where
$P_{n-2}(\Z)$ is embedded in the top left corner of matrices in $P_{n-1}(\Z)$  and  $V_k(g)$ is the period
\begin{equation}\label{charexpgln2}
    V_k(g) \ \ = \ \ \int_{U(\Z)\backslash U(\R)}F(ug)\,\chi_k(u)\i\,du\,.
\end{equation}
Note that the first term on the right hand side of (\ref{charexpgln1}), which equals $V_0(g)$, vanishes by definition when $F$ is {\em cuspidal}.  The second summand on the right hand side of (\ref{charexpgln1}) corresponds to the large orbit of $GL(n-1,\Q)$ on $\Q^n$ from (\ref{glnorbitfact}), which is responsible for the tautologically equivalent phrasing of (\ref{charexpgln1}) in  the adelic terminology originally used by Piatetski-Shapiro \cite{psmult} and Shalika \cite{shalika}.

 Of course $V_k(hg)$ too is an automorphic function,  under $P_{n-2}(\Z)$.  Hence we may repeat this discussion with $n$ replaced by $n-1$  and get a nested expansion, involving sums of translated periods over $N=\{$unit upper triangular matrices$\}\subset G$ of the form
 \begin{equation}\label{charexpgln3}
    \int_{N(\Z)\backslash N(\R)}F\(\left(
                                 \begin{smallmatrix}
                                   1 & x_1 & \star & \star & \star \\
                                   0 & 1 & x_2 & \star & \star \\
                                   0 & 0 & 1 & \ddots & \star \\
                                    \vdots & \vdots & \ddots & \ddots & x_{n-1} \\
                                   0 & 0 & \hdots &0 & 1 \\
                                 \end{smallmatrix}
                               \right)
    g\)e(-k_1x_1-k_2x_2-\cdots-k_{n-1}x_{n-1})\,dn\,
 \end{equation}
 with $k_1,\ldots,k_{n-1} \in \Z_{\ge 0}$.  These latter integrals are Whittaker integrals (in particular, ``degenerate'' Whittaker integrals if some $k_i=0$), and have been widely studied.  Assume now that $F$ is a cusp form, so that the   degenerate Whittaker integrals  vanish automatically.
When $F$ is an eigenfunction of the ring of invariant differential operators and of moderate growth,  a uniqueness principle allows one to write the nondegenerate Whittaker integrals in terms of  multiples of a  special function $W_{(k_1,\ldots,k_{n-1})}(g)$.
  The uniqueness principle further relates these to each other by the formula
\begin{equation}\label{Wksimplerform}
\aligned
    W_{(k_1,\ldots,k_{n-1})}( g) \ \ &  = \ \ W_{(1,1,\ldots,1)}(\D_k\, g) \,, \\
   \text{with~~} \D_k \ \ & = \ \ \(
    \begin{smallmatrix}
 k_1k_2\cdots k_{n-1} &   &  & &   &  \\
  &  k_2\cdots k_{n-1} &   &  & &  \\
  &   &  k_3\cdots k_{n-1} & &  &  \\
  &   &   &  \ddots &  & \\
  &   &   &   & k_{n-1} & \\
  &   &   &   &  & 1 \\
    \end{smallmatrix}
    \)
  \endaligned
\end{equation}
(both sides transform identically under left translation by $N$).  Thus we can write  $V(g)$ from (\ref{psshalexpn2}) as
\begin{equation}\label{psexpansformula}
    V(g) \ \ = \ \ \sum_{k_1,\ldots,k_{n-1}\,\in\,\Z_{> 0}} a_{(k_1,\ldots,k_{n-1})}\,W_{(k_1,\ldots,k_{n-1})}( g)\ , \ \ a_{(k_1,\ldots,k_{n-1})}\,\in\,\C\,,
\end{equation}
a relation   entirely analogous to (\ref{fourierseries2}).  The left translations by $\g\in P_{n-1}(\Z)$  that came from (\ref{charexpgln1}) and its nested descendants result  in  the Piatetski-Shapiro/Shalika expansion  (\ref{psshalexpn1}).

It should be noted that  in the particular case of $n=3$, the iteration stage of the argument becomes much simpler  because $P_{n-2}(\Z)$ contains $N(\Z)$ as a finite index subgroup.  The remaining invariance under the subgroup
 $\left\{\tthree{1}{\star}{0}{0}{1}{0}{0}{0}{1}\right\}$ leads to a Fourier expansion in the starred entry.  This is also the situation for the final step of the iteration for general $n$.  Though the Fourier expansion (\ref{psshalexpn1}) is most frequently used for cusp forms, its analog  for Eisenstein series -- and in particular the degenerate Whittaker coefficients contained therein --  is still important for a number of applications.  See \cite[\S7]{bumpgl3} for complete details of the Fourier expansions for Borel Eisenstein series on $SL(3)$.

\subsection{Example:~Jiang's expansion on $Sp(4)$}

Let us now consider the split Lie group $G=Sp(4,\R)$, defined as
\begin{equation}\label{sp4expn1}
 G \ \ =  \ \ \{\,g \in SL(4,\R) \, \mid \, gJg^t \,=\,J\, \} \, , \ \ \   J \ \ = \ \ \(\begin{smallmatrix} 0 & 0 & 0 & 1 \\ 0 & 0 & 1 & 0 \\ 0 & -1 & 0 & 0 \\ -1 & 0 & 0 & 0
    \end{smallmatrix}\).
\end{equation}  The root system of $G$ has 8 roots.  With respect to the fixed maximal torus
\begin{equation}\label{sp4maxtorus}
    T \ \ = \ \ \left\{ \(\begin{smallmatrix} t_1 & 0 & 0 & 0 \\ 0 & t_2 & 0 & 0 \\ 0 & 0 & t_2\i & 0 \\ 0 & 0 & 0 & t_1\i
    \end{smallmatrix}\) \, | \  t_1, t_2 \,\neq\,0\right\},
\end{equation}
root vectors with respect to the four positive roots are given by
\begin{equation}\label{npluselts}
\gathered
    X_{\a_1} \ \ = \ \ \left[\begin{smallmatrix}0 & 1&0 & 0 \\ 0&0 & 0 &0 \\ 0&0  & 0& -1 \\ 0 &0 &0 &0
\end{smallmatrix}\right] \, , \,
    X_{\a_2} \ \ = \ \ \left[\begin{smallmatrix} 0&0 &0 &0  \\ 0& 0& 1 &0 \\0 & 0 & 0& 0\\  0    &0 & 0&
0 \end{smallmatrix}\right] \, , \,
    X_{\a_1+\a_2} \ \ = \ \ \left[\begin{smallmatrix} 0&0 & 1& 0 \\ 0&0 &  0& 1\\0 &0  &0 & 0\\0  & 0& 0  &0
\end{smallmatrix}\right]\, , \\
\ \ \ \ \qquad\qquad\qquad\qquad\qquad\qquad \ \text{and } \
    X_{2\a_1+\a_2} \ \ = \ \ \left[\begin{smallmatrix} 0&0 &0 & 1 \\ 0& 0& 0 &0 \\ 0& 0 &0 &0 \\ 0 &0 &0 &0
\end{smallmatrix}\right]
\,,
\endgathered
\end{equation}
where $\a_1$ is the short simple positive root and $\a_2$ is the long simple positive root.  A root vector for the negative of any of these roots is given by the transpose  of the corresponding matrix.

Let $P$ denote the standard ``Klingen'' parabolic subgroup of $G$,
\begin{equation}\label{Klingendef}
    P \ \ = \ \  \left\{  \, \(\begin{smallmatrix}
                             \star & \star & \star & \star \\
                             0 & \star & \star & \star \\
                            0 & \star & \star & \star \\
                            0 & 0 & 0 & \star \\
                           \end{smallmatrix}
    \)  \ \in \ G \, \right\}.
\end{equation}
Its unipotent radical $U$ is  a 3-dimensional Heisenberg group, with  Lie algebra $\frak u$   spanned by $X_{\a_1}$, $X_{\a_1+\a_2}$, and $X_{2\a_1+\a_2}$.  Its center is $[U,U]=\{e^{t X_{2\a_1+\a_2}}|t\in\R\}$.  The Levi factor of this parabolic has semisimple part  $SL(2,\R)$, and acts on $U/[U,U]\simeq \R^2$ by the same action   as in the previous $SL(3,\R)$ example.\footnote{In fact, both this and the $SL(3,\R)$ expansion can be proven directly via harmonic analysis of  Heisenberg groups (see \cite[\S3]{voronoi}), without the translation and induction argument just presented.} Hence the projection $\Pi_UF(g)$ has an expansion essentially identical to the one there.  The resulting formula was discovered by Dihua Jiang \cite[Lemma 2.1.1]{jiang}, who used it as an important tool in deriving an integral representation for the degree 16 tensor product $L$-function on $GSp(4)\times GSp(4)$.
A very similar expansion exists for the split  rank 2 group $SO(3,2;\R)$, which is a quotient of $Sp(4,\R)$.

\subsection{Example:~Siegel's expansion on $Sp(4)$}\label{sec:siegelsexpn}

Instead of  the Klingen parabolic (\ref{Klingendef}), consider the standard ``Siegel'' parabolic subgroup of $G=Sp(4,\R)$,
\begin{equation}\label{Siegeldef}
    P \ \ = \ \  \left\{  \, \(\begin{smallmatrix}
                             \star & \star & \star & \star \\
                             \star & \star & \star & \star \\
                            0 & 0 & \star & \star \\
                            0 & 0 & \star & \star \\
                           \end{smallmatrix}
    \)  \ \in \ G \, \right\}.
\end{equation}
Its unipotent radical $U$ is now a 3-dimensional abelian group, whose lie algebra $\frak u$ is spanned by $X_{\a_2}$, $X_{\a_1+\a_2}$, and $X_{2\a_1+\a_2}$.  The Levi component of $P$ is isomorphic to $GL(2,\R)$, and acts by the 3-dimensional symmetric square action, or alternatively the similarity action on the upper right $2\times 2$ matrix block in $U$.  The  complex orbits ${\mathcal O}(H,\G\cap U)$ can be classified in terms of the rank of this block (see section~\ref{sec:Cn}).

The arithmetic structure of the integral orbits in ${\mathcal C}(\G\cap H,\G\cap U)$ is quite subtle, as it is described in terms of ideal classes and class numbers in quadratic fields.  This is the basis for classical expansions of genus 2 holomorphic forms. The generalization of the Siegel parabolic to $Sp(2n,\R)$ or $O(n,n;\R)$ consists of matrices whose lower left $n\times n$ block vanishes.  Since its unipotent radical is abelian, this type of expansion  naturally generalizes to give a Fourier expansion of an automorphic form $F$ on either of these two groups.

\subsection{Expansions on exceptional groups}\label{sec:matumotosec}

Gan-Gross-Savin \cite{ggs} give a theory of Fourier expansions for particular types of automorphic forms on  $G$ = the split real form of $G_2$, namely those  whose archimedean component is a quaternionic discrete series representation.  They consider a maximal parabolic subgroup $P=LU$  whose unipotent radical $U$ is a $5$-dimensional Heisenberg group; the semisimple part of its Levi component $L$ is an $SL(2)$ subgroup determined by a short root.    The restriction to quaternionic discrete series representations is made to apply a uniqueness principle  that  pins down their coefficients as scalar multiples of particular special functions (as in (\ref{fourierseries2})), as well as to avoid coefficients from smaller orbits.  However,  some  aspects of their theory  apply to more general representations.  In this regard it is similar to Siegel's study of holomorphic forms for $Sp(2n,\R)$, in which the coefficients from (\ref{sec:siegelsexpn}) must be augmented by Whittaker coefficients for generic representations.

Brandon Bate \cite{bate} considers the general automorphic form on   $G$, and in particular   a maximal parabolic subgroup determined by a long root.  He finds an explicit version of  (\ref{charexp1}) similar to the Piatetski-Shapiro/Shalika and Jiang expansions, and applies it to obtain the functional equation of the degree 7 $L$-function on $G_2$ (the first explicit functional equation on this group, because the Langlands-Shahidi method only applies to groups whose Dynkin diagram is  part of a larger Dynkin diagram).  Hence all rank 2 groups have an essentially identical piece of their Fourier expansions of the same type:~an average over an embedded $SL(2)$ determined by a long root.

The maximal parabolics of the split real forms of larger exceptional groups have particularly rich structures.    We give a listing of the complex orbits in section~\ref{sec:tables}.  We remark that classifying the integral orbits can be extremely subtle, as it is already in the case of $G_2$ (see, for example, the Fourier expansion in \cite{jiang-rallis}).  Recently Bhargava \cite{bhargava},  Krutelevich \cite{krutelevich}, and Savin-Woodbury \cite{savinwoodbury} have made major progress on some of these group actions.  This subtlety is apparently more striking for groups of uneven root length than it is for the simply laced groups such as $E_6$, $E_7$, and $E_8$, where it is nevertheless very intricate.

\section{Vanishing of coefficients for certain small automorphic representations}

From now until the end of the paper we take $G_{\text{lin}}$ to be the real points of a Chevalley group, defined compatibly with the Chevalley $\Z$-basis.
We shall also suppose that the automorphic form $F \in C^\infty(\Gamma \backslash G)$ is a smooth vector for an automorphic representation.

 After fixing a maximal torus and choice of positive root system for $G$ with respect to this torus, let $\Sigma^+$ denote the positive simple roots.  Let $S$ be an arbitrary subset of $\Sigma^+$ and $P=P(S)$ the standard  parabolic subgroup associated to $S$:~$P$ contains the one-parameter subgroups generated by root vectors $X_{\a}$ of all positive roots, as well as the negative roots such that $-\a \in S$.  It has a Levi decomposition $P=LU$, where  $L$ is a maximal reductive subgroup of $P$ (containing the one-parameter subgroups generated by the root vectors $X_\a$, $X_{-\a}$ of roots $\a \in S$),
    and $U$ is its unipotent radical (containing all  one-parameter subgroups generated by root vectors for positive roots, aside from the ones that are contained in $L$).  Since $\G$ is arithmetic,  $\G\cap [L,L]$ and $\G\cap U$ are  arithmetic subgroups of $[L,L]$ and $U$, respectively.

 The Lie algebra ${\frak u}$ of the unipotent radical $U$ decomposes as the direct sum
\begin{equation}\label{lieV}
    {\frak u} \ \ = \ \ \bigoplus_{i\,=\,1}^m\, {\frak u}_i \,,
\end{equation}
 where ${\frak u}_i$ is the span of the root vectors
for all positive roots $\sum_{\a\in\Sigma^+} n_\a \a$ such that  $\sum_{\a\in S}n_\a = i$.
 In the case that $P$ is a maximal parabolic (that is, $S$ has exactly one element), the adjoint action of $L$ acts irreducibly on each ${\frak u}_i$, actions that are known as  {\em internal Chevalley modules}.  It is furthermore known that each of these actions has finitely many complex orbits.  We shall give an enumeration of these later in section~\ref{sec:tables}.

Let $U\supset U^{(1)}\supset U^{(2)}\supset \cdots$ be the lower central series of $U$, i.e., $U^{(j)}:=[U,U^{(j-1)}]$.
Notice that the Lie algebra of  $U^{(j-1)}$ of $U$ is just  ${\frak u}^{(j-1)}=\oplus_{i\ge j}{\frak u}_i$.
Suppose now that $\l$ is a nontrivial linear functional on some ${\frak u}_j$, which we extend trivially to the rest of ${\frak u}^{(j-1)}$,
 and then exponentiate to  a character $\chi$ of  $U^{(j-1)}$.  We can now consider the Fourier expansion of the type (\ref{charexp6}), but with the subgroup $U$ replaced by $U^{(j-1)}$ and the projection operator $\Pi_U$ from (\ref{charexp2}) replaced by integration over $\G\cap U^{(j)}\backslash U^{(j)}$.   Consider the linear functional on an automorphic representation which maps an automorphic form $F$ to the Fourier coefficient $F_\chi$ defined in  (\ref{fourierseries4}).  This map commutes with the automorphic representation's right translation by the adele group of $G$, and thus gives a global linear functional which is $\chi$-equivariant with respect to $U^{(j-1)}$.  By restriction, it thus gives nonvanishing $\chi$-equivariant local linear functionals for each completion of the ground field.

  For an intricately defined character $\chi$ and a relatively simple automorphic form $F$, the
Fourier coefficient $F_\chi(g)$ from (\ref{fourierseries4}) may vanish identically in $g$; for example, this happens if $F$ is constant, but $\chi$ is not.  The following theorem of Matumoto gives a condition that often ensures this vanishing for all automorphic forms in an automorphic representation with archimedean component $\pi$. It is the archimedean analog of a more well-known theorem of M\oe glin-Waldspurger \cite{Mowa}.  Both results work with the nonvanishing equivariant local linear functionals of the previous paragraph. The
statement involves the complexified wavefront set $WF(\pi)_{\mathbb{C}}$ in
the dual Lie algebra $\mathfrak{g^{\star }}$ to $\mathfrak{g}$, which has
several different definitions:~for example, it is the associated variety of
the annihilator ideal of $\pi $, and it can also be computed in terms of the
support of the Fourier transform of the character of $\pi $. It is always
the closure of a unique coadjoint nilpotent orbit in $\mathfrak{g}$ \cite{joseph,borho}.

\begin{thm}(Matumoto \cite{matumoto})\label{matumoto}
Consider $\l$ as an element of $\frak g^\star$ by  trivially extending it to the rest of ${\frak g}$, and assume that  $\l\notin WF(\pi)_{\mathbb{C}}$.  Then $F_\chi\equiv 0$ for all vectors $F$ in any automorphic representation that has $\pi$ as an archimedean component.
\end{thm}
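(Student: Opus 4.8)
The strategy is to reduce the vanishing of the global Fourier coefficient $F_\chi$ to a purely local, representation-theoretic statement about the existence of certain equivariant functionals, and then invoke the characterization of when such functionals can exist in terms of the wavefront set. The key observation, already recorded in the paragraph preceding the theorem, is that the map $F\mapsto F_\chi$ is a linear functional on the automorphic representation $\Pi=\otimes'_v \pi_v$ which intertwines the right action of $U^{(j-1)}(\A_k)$ with the character $\chi$. If this functional were not identically zero, then by the factorizability of $\Pi$ and of $\chi$ over the places of $k$, one would obtain for \emph{every} place $v$ a nonzero $\chi_v$-equivariant continuous linear functional on $\pi_v$ — in particular, at the archimedean place, a nonzero continuous functional $\ell:\pi\to\C$ satisfying $\ell(\pi(u)w)=\chi_\infty(u)\ell(w)$ for $u$ in the archimedean points of $U^{(j-1)}$. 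So the first step is to carry out this reduction carefully: pass from the adelic period integral to a local functional at the real place, noting that the character $\chi$ restricted to $U^{(j-1)}$ is the exponential of the functional $\lambda$ (extended trivially), so $\chi_\infty$ is genuinely a nontrivial unitary character whose ``derivative'' is $\lambda$.

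The second step is to recognize $\ell$ as precisely the kind of degenerate-Whittaker / generalized-Whittaker functional to which Matumoto's analysis applies. The datum $\lambda\in\frak g^\star$, trivially extended, determines (via the Kirillov–Kostant form, or concretely via the $\Z$-grading attached to the parabolic $P(S)$) a nilpotent element and an associated unipotent subgroup $U^{(j-1)}$ together with the character $\chi_\infty=\exp\circ\lambda$; the functional $\ell$ is then a $(U^{(j-1)},\chi_\infty)$-equivariant functional on $\pi$, i.e. an element of $\operatorname{Hom}_{U^{(j-1)}}(\pi,\C_{\chi_\infty})$. Matumoto's theorem — which is the actual engine here, and which we are permitted to assume as a cited external input — asserts that the nonvanishing of such a space of functionals forces $\lambda$ to lie in the closure of the wavefront set $WF(\pi)_\C$; equivalently, the contrapositive says that if $\lambda\notin WF(\pi)_\C$ then $\operatorname{Hom}_{U^{(j-1)}}(\pi,\C_{\chi_\infty})=0$. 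Combining this with the first step: $\lambda\notin WF(\pi)_\C$ forces $\ell=0$, hence the global functional $F\mapsto F_\chi$ is zero, i.e. $F_\chi\equiv 0$ for all $F$ in any automorphic representation with archimedean component $\pi$.

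The third step is simply bookkeeping: one must check that the unipotent group and character appearing in the statement of the theorem match those in Matumoto's hypotheses — in particular that extending $\lambda$ trivially from $\frak u_j$ to all of $\frak g$ gives a well-defined element of $\frak g^\star$ whose associated ``Whittaker datum'' is exactly $(U^{(j-1)},\chi)$, and that the adic character $\chi$ at the archimedean place has the required form. This is where the grading $\frak u=\oplus \frak u_i$ and the identification $\operatorname{Lie}(U^{(j-1)})=\oplus_{i\ge j}\frak u_i$ from the preceding discussion are used: $\lambda$ kills $\frak u_i$ for $i>j$ automatically, so $\chi$ is automatically trivial on $U^{(j)}$ and the period integral $\int_{\G\cap U^{(j)}\backslash U^{(j)}}$ is exactly the projection making $F_\chi$ well-defined on the quotient. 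No genuinely new inequality or estimate is needed; the content is entirely imported from Matumoto's theorem.

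**Main obstacle.** The only delicate point is the passage from the \emph{global} nonvanishing of $F\mapsto F_\chi$ to the \emph{local} archimedean nonvanishing of $\ell$. One has to know that if the adelic period functional is nonzero on $\Pi$, then its restriction to a pure tensor can be arranged nonzero, and that the resulting functional factors as a product of local functionals so that the archimedean factor is itself nonzero and continuous in the appropriate (smooth, moderate-growth) topology on $\pi$. This is standard once one works with the smooth-vector model of the automorphic representation — which is why the running hypothesis ``$F$ is a smooth vector for an automorphic representation'' was imposed at the start of the section — but it is the step that must be stated with care, since Matumoto's theorem is about continuous functionals on a Casselman–Wallach representation, not about abstract algebraic Whittaker models. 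Everything after that is a direct quotation of Matumoto's result.
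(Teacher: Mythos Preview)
Your proposal is correct and follows exactly the approach the paper takes: the paper does not supply an independent proof of this statement but rather attributes it to Matumoto, with the global-to-local reduction sketched in the paragraph immediately preceding the theorem (the observation that $F\mapsto F_\chi$ gives a $\chi$-equivariant functional on the automorphic representation, hence by restriction nonzero local $\chi_v$-equivariant functionals at every place). Your write-up is a more careful elaboration of that same argument, with the added attention to continuity in the Casselman--Wallach topology being a welcome clarification the paper leaves implicit.
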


\noindent The paper \cite{Mowa} by M\oe glin-Waldspurger  contains the same assertion, but with $\pi$ a nonarchimedean representation.\footnote{Though \cite{Mowa} makes a restriction that the field have residual characteristic greater than 2, the authors have informed us that its use on pp.~429 and 431 of their paper can be avoided, and hence the restriction removed.}

\section{Abelian unipotent radicals and small representations}
\label{sec:newthmpf}

Let us now consider a standard maximal parabolic subgroup $P=P(\{\a\})=LU$ with abelian  unipotent radical $U$.  This is the case precisely when no  root has a coefficient of $\a$ greater than one when expanded in terms of the basis of positive simple roots.

According to the tables in \secref{sec:tables}, the action of the complexification $L(\C)$ on $U$ will in general have more than two orbits.  In general the smallest orbit is always the trivial orbit, while the next biggest orbit corresponds to a character which is sensitive to a single root vector in the Lie algebra $\frak u$ of  $U$.   An orbit representative can be furnished by restricting the generic character $\psi$ to $U$; recall $\psi$ is the   character of the unipotent radical $N$ of the minimal parabolic $P(\Sigma^+)$  which satisfies
\begin{equation}\label{psidef}
    \psi(e^{t X_\a}) \ \ = \ \ e^{2\pi i t} \ \ \ \ \text{for all} \ \, \a \,\in\,\Sigma^+\,,
\end{equation}
and which is trivial on the one parameter subgroups generated by all other positive root vectors.  Not only is $\chi$ $L(\C)$-equivalent to $\psi|_U$, but it is furthermore $L(k)$-equivalent if $\chi$ is defined over the number field $k$ and $G$ is simply laced (see \cite{rrs} and \cite[pp.~759-760]{savinwoodbury}).

Now suppose that the archimedean component $\pi$ of an automorphic representation is a minimal representation.  According to Matumoto's theorem~\ref{matumoto} and the tables in section~\ref{sec:tables}, functions $F$ in this automorphic representation will have nonzero Fourier coefficients $F_\chi$ only for $\chi$ in these smallest two orbits  -- i.e., $\chi$ must be trivial, or $L(\C)$-equivalent to $\psi|_U$. As a result, the Fourier expansion for automorphic realizations of minimal representations behaves very similarly to the $GL(n)$ case, with a formula analogous to (\ref{psshalexpn1}).  Since this logic breaks down when $U$ is nonabelian, we  restrict to the cases in theorem~\ref{ournewtheorem} (where this complication does not occur).

\begin{lem}\label{lem:lemma1}
Consider the $E_n$ Dynkin diagrams as numbered in figure~\ref{EnDynkins} and the chain of Levi components of maximal parabolics $P(\{\a_n\})$ of types
\begin{equation}\label{lemmachain}
   E_7 \ \ \supset \ \  E_6 \ \ \supset  \ \ D_5 \ \ \supset \ \  A_4 \ \ \supset  \ \ A_2  \times A_1 \ \ \supset \ \ A_1\times A_1 \ \ \supset \ \ A_1
\end{equation}
formed by successively deleting the highest numbered node.  The unipotent radicals $U$ of each of these parabolics is abelian.  Decompose the Lie algebra $\frak n$ of $N$ as the direct sum of $\frak n'$ and the Lie algebra $\frak u$ of $U$, where $\frak n'$ is spanned by root vectors $X_\a$ of positive roots whose coefficient of $\a_n$ is zero.  Then if $X$ is a nonzero element of $\frak n'$, the sum $X+X_{\a_n}$ cannot lie in the minimal adjoint nilpotent orbit $\cal O$ of $\frak g$.
\end{lem}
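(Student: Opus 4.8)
The plan is to reduce the statement to a single condition on the root system: that the element $X_{\a_n}$ cannot be part of an $\mathfrak{sl}_2$-triple together with an element of $\mathfrak n'$ whose bracket relations with $X_{\a_n}$ force the resulting nilpotent into the minimal orbit. The cleanest route is to exploit the characterization of the minimal orbit $\cal O$ via the Joseph ideal / the vanishing of certain quadratic invariants, but for a concrete argument I would instead use the grading induced by the parabolic $P(\{\a_n\})$. Since $U$ is abelian, $\mathfrak g$ acquires a $3$-term grading $\mathfrak g = \mathfrak g_{-1}\oplus\mathfrak g_0\oplus\mathfrak g_1$ with $\mathfrak g_1=\mathfrak u$, $\mathfrak g_{-1}=\bar{\mathfrak u}$, and $\mathfrak g_0 = \mathfrak l$. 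A nonzero element $X\in\mathfrak n'\subset\mathfrak g_0$ and the vector $X_{\a_n}\in\mathfrak g_1$ lie in distinct graded pieces, so $Y:=X+X_{\a_n}$ is not homogeneous; I want to show $\operatorname{Ad}(G)\cdot Y$ does not meet $\cal O$.

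First I would recall the dimension of the minimal orbit in each of the relevant types ($E_7$: $\dim\cal O=34$; $E_6$: $22$; $D_5$: $16$; $A_4$: $8$; and the products $A_2\times A_1$, $A_1\times A_1$, $A_1$ — here $\mathfrak g$ is the ambient simple algebra $E_8,\dots$ actually no: in \lemref{lem:lemma1} the relevant $\mathfrak g$ is always the full exceptional algebra whose maximal parabolic is being taken, since these are the Levi chain inside a fixed $E_n$). So I would fix $\mathfrak g$ to be $\mathfrak e_7$ (the first entry), and note that all the later entries are Levi subalgebras $\mathfrak l^{(k)}$ of successively smaller parabolics, with $\mathfrak n'$ for the $k$-th parabolic being the nilradical-complement inside $\mathfrak l^{(k-1)}$. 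The key uniform input is that the minimal orbit $\cal O$ of $\mathfrak g$ consists of highest-root vectors and their conjugates, and every element of $\cal O$ is a rank-one element in a precise sense: $(\operatorname{ad} Y)^3 = 0$ and $(\operatorname{ad} Y)^2$ has rank $1$ (it is the map $Z\mapsto B(Y_{\text{dual}},Z)\,Y$ up to scalar, where $B$ is the Killing form). This is the characterization I would lean on.

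The heart of the argument is then a rank computation: for $Y = X + X_{\a_n}$ with $X\in\mathfrak n'$ nonzero, I would show $(\operatorname{ad} Y)^2$ has rank at least $2$, or that $(\operatorname{ad} Y)^3\ne 0$, or — most directly — that $Y$ does not satisfy the defining equations of (the closure of) $\cal O$. Since $\mathfrak n'$ lies in the derived subalgebra of the Levi $\mathfrak l$, and $\mathfrak l$ acts on $\mathfrak u$ with the one relevant internal Chevalley module structure, $[X, X_{\a_n}]$ is a nonzero element of $\mathfrak u$ whenever $X$ has a component along any root $\b$ with $\b+\a_n$ a root; handling the case where $X$ is orthogonal to all such $\b$ (so $[X,X_{\a_n}]=0$ and $Y$ is a commuting sum) is the subtle sub-case, because then $Y$ is a nilpotent in a Levi-type configuration and one must see it is "too big" for $\cal O$ by a weight/orbit-closure argument using the tables of \secref{sec:tables}: the $\mathfrak l$-orbit of such an $X$ already sits in an adjoint nilpotent orbit strictly larger than $\cal O$, and adding $X_{\a_n}$ cannot shrink it. The main obstacle I expect is precisely this commuting case — ruling out that $X+X_{\a_n}$ degenerates into $\cal O$ requires knowing, for each entry of the chain \eqref{lemmachain}, which adjoint nilpotent orbit contains a generic (and every nonzero) element of $\mathfrak n'$, and that it is never contained in $\overline{\cal O}\setminus\cal O = \{0\}$; this is exactly the content extracted from the orbit tables, so the proof ultimately appeals to \thmref{thm:internal} and the explicit data in \secref{sec:tables} rather than a purely root-combinatorial estimate.
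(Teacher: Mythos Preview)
Your strategy differs from the paper's: you stay in the adjoint representation and aim to violate the rank-$1$ condition on $(\operatorname{ad} Y)^2$, whereas the paper works in the \emph{smallest} fundamental representation of each $\mathfrak g$ in the chain. For $\mathfrak e_7$ (resp.\ $\mathfrak e_6$) they observe that in the $56$- (resp.\ $27$-) dimensional representation the matrix of $X_{\alpha_n}$ squares to zero; since this is a conjugation-invariant condition, every element of $\mathcal O$ has minimal polynomial $x^2$ there. They then check by direct computation that $(X+X_{\alpha_n})^2=0$ forces every coefficient $c_\alpha$ in $X=\sum_\alpha c_\alpha X_\alpha$ (over the $36$, resp.\ $20$, positive roots of the Levi) to vanish. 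For $\mathfrak{so}(5,5)$ the squaring condition in the $10$-dimensional representation leaves three coefficients undetermined, but a rank check (elements of $\mathcal O$ have rank $2$, while the surviving $X+X_{\alpha_5}$ have rank $\ge 4$) finishes that case. The remaining smaller $n$ are handled by Jordan form. No appeal to the tables of \secref{sec:tables} is made.

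Your proposal has a genuine gap in the commuting sub-case. You assert that when $[X,X_{\alpha_n}]=0$, ``the $\mathfrak l$-orbit of such an $X$ already sits in an adjoint nilpotent orbit strictly larger than $\mathcal O$.'' This is false: take $X=X_\beta$ for any single positive root $\beta$ of the Levi --- it lies in the \emph{minimal} orbit of $\mathfrak g$ (all root vectors are $G$-conjugate in the simply-laced case), not in a larger one. What is actually true is that a sum $X_\beta+X_{\alpha_n}$ of two commuting, linearly independent root vectors never lies in $\mathcal O$; but that is a different statement and itself needs proof. Your adjoint-rank approach could in principle be pushed through, but you have not carried out the rank estimate in either the commuting or the non-commuting case, and the proposed reduction to the orbit tables rests on the incorrect claim above. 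The paper's device of passing to a small representation turns the question into a finite matrix check, which is why their argument is short.
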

\begin{proof}
The unipotent radicals are abelian because the highest root of each of these root systems has coefficient 1 of the last simple root.  For the second statement, we note that $X_{\a_n}$ is an element of $\cal O$.  In the minimal (56-dimensional) representation of $\frak e_7$, $X_{\a_7}$ is a matrix which squares to zero, and therefore $p(x)=x^2$ is the minimal polynomial of any element of $\cal O$ in this representation.  However, when one writes $X=\sum c_\a X_\a$ as a linear combination of the 36 root vectors $X_\a\in {\frak n}'$ (corresponding to the positive roots of the embedded $\frak e_6$ in (\ref{lemmachain})), the condition that $p(X+X_{\a_n})=0$ forces each of the 36 coefficients $c_\a$ to vanish.  This proves the lemma for $n=7$.  The analogous argument applies to the case of $n=6$ in its 27-dimensional representation, but with a linear combination of 20 root vectors (corresponding to the positive roots of the embedded $\frak{so}(5,5)$).

The case of $n=5$ is slightly different:~in the standard 10-dimensional representation of $\frak{so}(5,5)$ the above argument does not rule out $c_\a\neq 0$ for the roots $\a=\a_1$, $\a_2$, or $\a_1+\a_2$.  However, the matrices for $c_{\a_1}X_{\a_1}+c_{\a_2}X_{\a_2}+c_{\a_1+\a_2}X_{\a_1+\a_2}+X_{\a_5}$ have rank at least 4 unless $c_{\a_1}=c_{\a_2}=c_{\a_1+\a_2}=0$.  The matrix for $X_{\a_5}$ has rank 2 and hence so must all elements of $\cal O$, proving the lemma for $n=5$.  The cases of smaller $n$ can be handled directly in terms of Jordan canonical form.
\end{proof}

\begin{proof}[Proof of theorem~\ref{ournewtheorem}]
Let $G$ be a split Chevalley group of type $E_7$ and let $P=P(\{\a_7\})$ denote the standard maximal parabolic subgroup of $G$ associated with the last node (in the numbering of figure~\ref{EnDynkins}).  Its unipotent radical is 27 dimensional, with four complex character variety orbits of dimensions 0, 17, 26, and 27 (see section~\ref{sec:E7node7}).  According to the table there, the 26- and 27-dimensional orbits lie in coadjoint nilpotent orbits strictly larger than the minimal coadjoint nilpotent orbit in the usual closure ordering.  Hence Matumoto's   and M\oe glin-Waldspurger's theorems imply that $F$ can be written as a sum (\ref{fourierseries3}) in which
the only characters which contribute are either
trivial  or $L(k)$-equivalent to the restriction of $\psi$ to $U$.  Since the adelic automorphic form $F$ is left invariant under $G(k)\supset L(k)$, in considering these Fourier coefficients we may furthermore assume that either $\chi$ is trivial  or equal to $\psi|_U$.

We now separately examine these two types of contributions.  First, assume that $\chi=\psi|_U$.  The coefficients $F_\chi$ are automorphic under the stabilizer of $\chi$ inside the Levi component $L$.  This stabilizer is a maximal $k$-parabolic of $L$ associated with node 6 in the $E_6$ Dynkin diagram, and contains $N\cap L$.  In particular it has a 16-dimensional unipotent radical $V$ in which we may take a Fourier expansion of $F_\chi$.    Together $V$ and $U$ generate the unipotent radical of the nonmaximal parabolic subgroup  $P(\{\a_6,\a_7\})$ in $G$.
However,  lemma~\ref{lem:lemma1} and Matumoto's   and M\oe glin-Waldspurger's theorems imply no nontrivial characters on $V$ can contribute to this expansion (essentially because the minimal orbit has already been ``used up'' by the nontrivial character $\chi$ on $U$).  Thus $F_\chi$ is trivial under left translation by $V$,  and is consequently automorphic on the Levi component of the stabilizer, of type $D_5$.

In the case that $\chi$  trivial, $F_\chi$ is  automorphic on $\G\cap L$ by dint of the fact that the stabilizer of the trivial character is the full group.  Thus it is automorphic on a Chevalley group of type $E_6$.  An automorphic function on $E_6$, be it $F_\chi$ or  an automorphic form for a Chevalley group of type $E_6$, can then be expanded in the $P(\{\a_6\})$ parabolic as above.  Thus in all cases we can look at a further Fourier expansion of an automorphic function on a smaller group in the chain (\ref{lemmachain}).
Proceeding downwards and using the fact that the unipotent radicals in lemma~\ref{lem:lemma1} are always abelian, we see that $F$ is a sum of   translates of Fourier coefficients of the form (\ref{degenwhitint}).

\end{proof}

\section{Orbit structure of internal Chevalley modules}\label{sec:tables}

In this section, we list the full complex orbit structure for all internal Chevalley modules of maximal parabolic subgroups.  Recall that these are the actions of the Levi component on the individual graded pieces $\frak u_i$ from (\ref{lieV}).
  We also give examples for some low rank classical groups, noting  that those with rank $\le 2$ have been discussed earlier in section~\ref{sec:characterexpansions}.  Papers \cite{rohrle,rohrleinternal,rrs,MR1219660} give a complete discussion in a number of important cases; see also \cite{djokovic,igusa,haris,popov} for some historically important examples.  Peter Littelmann's computer software\footnote{We are grateful to Peter Littelmann and Pierre Vanhove for helping us to get this software to run in modern computing environments.} \cite{littelmann} computes the orbits in first graded piece $\frak u_1$.  Because of the observation (stated precisely in each case below) that each higher graded piece $\frak u_i$, $i>1$, occurs as the first graded piece of another internal Chevalley module, the software thus handles all cases over $\C$.  This observation was previously used in \cite{shahidi} as part of an induction that establishes analytic properties in the Langlands-Shahidi method.

  A few comments are in order about covers.  First of all, the orbit structure of internal Chevalley modules is unaffected by taking a central extension:~this is because the center acts trivially on the Lie algebra under the adjoint action.  Therefore in working out the examples for the Lie algebras of various types below, it is sufficient to calculate with a particular semisimple Lie group having that Lie algebra.
  Furthermore,  the action of the Levi $L$ is essentially pinned down by that of $[L,L]$, since the action of the center of $L$ on each $\frak u_i$ can be easily described in terms of the structure of the root system.  The action of $[L,L]$ can itself be identified using the Weyl character formula, which is slightly more difficult but still straightforward.

  Though the tables here compute the orbits on the graded pieces $\frak u_i$ only for $i>0$, the orbits for $\frak u_{-i}$ are related using the Cartan involution.  In particular, the character variety $\frak u_{-1}$'s orbits are identical to those of $\frak u_1$.

For the sake of compact notation, we often say that a subalgebra of a Lie algebra ``contains a root'' $\a$ when it contains a root vector $X_\a$ for $\a$; likewise, we apply this same terminology to a subgroup that contains the one-parameter subgroup generated by $X_\a$.  We shall also sometimes write a nonsimple root by stringing together its coefficients when expanded a sum of the positive simple roots (for example, the root $\a_1+2\a_2+\a_3+\a_4$ of $D_4$ could be more concisely written as 1211).  Furthermore, we will indicate a basepoint of an orbit is a sum of root vectors by formally adding these abbreviated labels of the respective roots.  (In each  case, linear combinations with nontrivial coefficients of these root vectors gives a basepoint of the same orbit, so omitting coefficients is harmless.)  We also write the basepoint of the trivial orbit as $00\cdots00$. In the tables below we list basepoints for each orbit, their dimensions, and the adjoint nilpotent orbit they are contained in.  Since these are isomorphic to coadjoint nilpotent orbits, we will label its column as such (in deference to the commonly recognized terminology).  Such orbits will be listed in terms of their marked Dynkin diagrams, which are also strings of nonnegative integers.

Before giving the list of orbits we shall first make some remarks pertinent to the classical group cases.
As we remarked in section~\ref{sec:PSshal}, the orbit structure for groups of type $A_n$ crucially depends on the fact that any linearly independent set of vectors can be extended to a basis.  We now describe the  analogous vector space statements needed for the other classical groups.  These are somewhat more complicated and involve  Witt's theorem (which we  recall below).  We shall keep the statements here flexible enough to cover the case of type $A_n$, which we will be able to study directly using elementary linear algebra.

Suppose that $V$ is a complex {\em  bilinear} space, that is, a  vector space equipped with a symmetric or skew-symmetric bilinear form
$\left\langle .,.\right\rangle _{V}$.  Let
$r_{V}$ and $n_{V}$ denote the rank and nullity of $\left\langle .,.\right\rangle _{V}$, respectively, and let   $\epsilon_{_{V}}=1,-1,0$ according as $\left\langle .,.\right\rangle_{V}$ is symmetric, skew-symmetric, or both (i.e., identically $0$).  These three cases correspond to groups of type $B_n$ or $D_n$, groups of type $C_n$, and groups of type $A_n$.

Two complex bilinear spaces $V$ and $W$ are {\em isometric} when there exists  an {\em isometry} between them, i.e., an invertible  linear map $s:V\rightarrow W$ satisfying $\left\langle sv_{1},sv_{2}\right\rangle _{W}=\left\langle v_{1},v_{2}%
\right\rangle _{V}$ for all $v_{1},v_{2}\in V$.
Witt's theorem \cite[Theorem~3.9]{artin} asserts that
this is the case precisely when  $(r_V,n_V,\e_{_V})=(r_W,n_W,\e_{_{W}})$.  Moreover, when this condition holds and furthermore the nullities $n_V=n_W=0$, any isometry between subspaces of $V$ and $W$ extends to one between all of  $V$ and $W$.

The set of isometries from $V$ to
itself constitutes its isometry group $I\left(  V\right)  \subset GL\left(
V\right)$.   Witt's theorem implies that two linear maps $a,b$ from a vector space $X$ into a bilinear space $V$ have isometric images if and only if $b$ can be written as $s\i a c$ for some $c\in GL\left(  X\right)  $ and $s\in I\left(  V\right)$.

We now record the translations of these statements in terms of matrices, via the bilinear form on $\C^n$ defined from an $n\times n$ matrix by the formula $\left\langle v_{1},v_{2}\right\rangle =v_{1}^{t}%
Mv_{2}$.  This form is symmetric or skew-symmetric according as $M$ is symmetric or skew-symmetric,
and its rank and nullity are the rank and nullity of $M$, respectively.
An isometry between bilinear spaces corresponding to $n\times n$ matrices $M_{1},M_{2}$ is a matrix $S\in
GL(n,\C)$ such that $S^{t}M_{1}S=M_{2}$.
  The isometry group
for the bilinear space is%
\[
I\left(  M\right)  \ \  = \ \ \left\{  S\in GL(n,\C) \, \mid \,  S^{t}MS=M\right\} ,
\]
and the rank of the column space of an   $n\times m$ matrix  $A$  is
$\operatorname{rank}\left(  A^{t}MA\right)$.
The consequences of Witt's theorem mentioned above can be restated as follows:

\begin{lem}
\label{M12}
\enumerate
\item[(1)]  Let $M_{1}$ and $M_{2}$ be two complex $n\times n$  matrices, both
symmetric or both skew-symmetric. Then
$\operatorname{rank}\left(  M_{1}\right)  =\operatorname{rank}\left(  M_{2}\right)
$ if and only if
 there exists a matrix $S\in GL(n,\C)$ such that
$S^{t}M_{1}S=M_{2}$.
\item[(2)]
Let $M$ be a complex $m\times m$ nonsingular symmetric or skew-symmetric  matrix, and let $A,B$
be  complex $m\times n$  matrices. Then
\[
\operatorname{rank}\left(  A\right)  =\operatorname{rank}\left(  B\right)  \text{  ~~ \emph{and}~~ }\operatorname{rank}\left(
A^{t}MA\right)  =\operatorname{rank}\left(  B^{t}MB\right)
\]
if and only if there exist $C\in GL(n,\C)$ and $S\in I\left(
M\right)  $ such that $B=S^{-1}AC$.
\end{lem}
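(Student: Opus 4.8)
The plan is to deduce both parts directly from Witt's theorem, as stated just above, by passing back and forth between the matrix and the coordinate-free formulations. For part (1), the matrix $M_i$ defines a bilinear form on $\C^n$ by $\langle v_1,v_2\rangle = v_1^t M_i v_2$, and the two forms are symmetric (resp.\ skew-symmetric) exactly as $M_i$ is. Since symmetry type is assumed equal, Witt's theorem says the two bilinear spaces are isometric precisely when their ranks and nullities agree; but on $\C^n$ the nullity is $n$ minus the rank, so equality of ranks is equivalent to equality of the pairs $(r,n,\e)$. An isometry in coordinates is exactly a matrix $S\in GL(n,\C)$ with $S^t M_1 S = M_2$, so part (1) is immediate.

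For part (2), I would use the reformulation of Witt's theorem for maps into a nonsingular bilinear space, recorded in the paragraph preceding the lemma: two linear maps $a,b\colon X\to V$ have isometric images if and only if $b = s^{-1} a c$ for some $c\in GL(X)$ and $s\in I(V)$. Taking $X=\C^n$, $V=\C^m$ with the form attached to $M$, and $a,b$ the maps given by the matrices $A,B$ (so that $b=s^{-1}ac$ reads $B = S^{-1} A C$), it remains only to check that ``$A$ and $B$ have isometric images'' translates into the stated rank conditions. The image of $A$, with its induced form, is an abstract bilinear space; its rank (of the restricted form) is $\operatorname{rank}(A^t M A)$ and its dimension is $\operatorname{rank}(A)$, hence its nullity is $\operatorname{rank}(A)-\operatorname{rank}(A^t M A)$. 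By part (1) applied inside $\C^m$ — or directly by Witt again — two such subspaces are isometric iff these two invariants agree for $A$ and for $B$, which is exactly the pair of rank equalities in the statement. Combining, $B=S^{-1}AC$ is solvable iff those rank equalities hold.

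The one point that needs a little care — and the step I expect to be the main obstacle — is the passage from ``images are isometric as abstract bilinear spaces'' to ``the inclusion maps extend to a global isometry $S\in I(M)$.'' This is precisely where the hypothesis that $M$ is \emph{nonsingular} ($n_V=0$) is used: the ``moreover'' clause of Witt's theorem guarantees that an isometry between subspaces of $V$ and $W$ extends to all of $V$ and $W$ only when the ambient nullities vanish, and here $V=W=\C^m$ with the same nonsingular form $M$. One must also be slightly careful that $C$ lands in $GL(n,\C)$ rather than merely being injective: since $B=S^{-1}AC$ with $S$ invertible forces $\operatorname{rank}(AC)=\operatorname{rank}(B)=\operatorname{rank}(A)$, one chooses $C$ to match bases of $\C^n$ compatibly on $\ker A$ and a complement, which can always be done invertibly. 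With those two observations in place, both directions of (2) follow formally.
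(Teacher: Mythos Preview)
Your proposal is correct and matches the paper's approach: the paper does not give a separate proof of the lemma at all, but presents it explicitly as a matrix restatement of the two consequences of Witt's theorem recorded in the preceding paragraphs (isometry is determined by $(r,n,\e)$, and in the nonsingular case isometries of subspaces extend). Your write-up simply unpacks that restatement, and the two points you flag as needing care --- the use of nonsingularity of $M$ for the Witt extension, and the construction of an invertible $C$ once the images agree --- are exactly the details one fills in; both are handled correctly.
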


\subsection{Type $A_n$ : $SL(n+1)$}\label{sec:An}

Owing to the Dynkin diagram symmetry, there are essentially $\lceil\f n2\rceil$ cases here.   A standard maximal parabolic subgroup is block upper triangular according to a decomposition $n_1+n_2=n+1$, with Levi component $L$ isomorphic to the subgroup of $GL(n_1)\times GL(n_2)$ defined by $\{(g,g') \in GL(n_1)\times GL(n_2)|\det(g)\det(g')=1\}$.  The unipotent radical is abelian in each case, so ${\frak u}={\frak u}_1$ is isomorphic to $n_1 \times n_2$ matrices. The action of $L$ on $\frak u$ is the tensor product action of the standard representation of $GL(n_1)$ on  $n_1$-dimensional vectors, with the contragredient representation of $GL(n_2)$ on $n_2$-dimensional column vectors.

This action has been classically studied  and has $\min(n_1,n_2)+1$ complex orbits, classified by the rank of the  $n_1 \times n_2$  matrix.  Indeed, the general orbit classification for unipotent radicals in classical groups goes by the name of ``classical rank theory'' because of its similarity to this prototypical case.  Representatives, accordingly, are given by matrices which are zero except for an $r\times r$ identity matrix in, say, their top right hand corner, $0 \le r \le \min(n_1,n_2)$.

Besides \thmref{matumoto}, a very complete analysis of vanishing Fourier  coefficients for representations of $GL(n)$ is given in \cite{Gurevich-Sahi}.

\subsection{Type $B_n$ : $SO(n+1,n)$}\label{sec:Bn}

Consider
the standard parabolic subgroup $P_{\a_k}$ of $B_n$, where $1\le k \le n$.
In this case the Levi factor $L$ has $[L,L]$ of type $A_{k-1}\times B_{n-k}$.  The unipotent radical is abelian when $k=1$, but is otherwise a two-step nilpotent group.  Here the action of $L$ on $\frak u_1$ is the tensor product of the standard (vector) actions of $SL(k)$ and $SO(n-k+1,n-k)$.  The action on $\frak u_2$ is the exterior square action of $SL(k)$ on antisymmetric $n$-tensors and arises for $D_k$ node $k$; its orbits are described in  section~\ref{sec:Dnnodebig}.

The action on $\frak u_1$ can be thought of as $SL(k)\times SO(n-k+1,n-k)$ acting on $k\times (2n-2k+1)$ matrices $A$.  Here $\C^{2n-2k+1}$ is viewed as a bilinear space, equipped with a symmetric bilinear form given by a $(2n-2k+1)\times (2n-2k+1)$ matrix $J$ that defines the orthogonal group. According to lemma~\ref{M12}, orbits of this action have a fixed value of the rank of $A$, and a fixed value of the rank of $A^t JA$.  Because $\ker\left(  A^{t}JA\right)  \supset\ker\left(  A\right)$, the ranks must also satisfy the inequality
\begin{equation}\label{Bnineq1}
    \operatorname{rank}(A^t J A) \ \ \le  \ \  \operatorname{rank}(A)\,.
\end{equation}
Furthermore,
\begin{equation}\label{Bnineq2}
    2\, \operatorname{rank}(A) \ - \ (2n-2k+1) \ \ \le \ \   \operatorname{rank}(A^t J A) \,.
\end{equation}
To see this, let $X=\ker(A^t JA)$ and consider the subspace $AX\subset \C^{2n-2k+1}$, which is the radical of the image of $A$ when thought of as a bilinear subspace.  The orthogonal complement of $AX$ in $\C^{2n-2k+1}$ contains the image of $A$.  Because $J$ is nondegenerate, $\C^{2n-2k+1}$ is the direct sum of $AX$ and its orthogonal complement, which gives the inequality  $\dim(AX)\le 2n-2k+1-\operatorname{rank}(A)$.  At the same time, $\dim(AX)=\operatorname{rank}(A)-\operatorname{rank}(A^t JA)$ because of the ``rank plus nullity equals dimension'' formula for the image of $A$.  Inequality (\ref{Bnineq2}) immediately follows.

Finally,
\begin{equation}\label{Bnineq3}
     \operatorname{rank}(A^t J A)\,, \  \operatorname{rank}(A) \ \ \le \ \ \min\{k,2n-2k+1\}
\end{equation}
because the rank of a matrix is always bounded by its row and column size.

The complex orbits on $\frak u_1$ not only satisfy (\ref{Bnineq1})-(\ref{Bnineq3}), but exist for each possibility.  This can be  directly seen by listing $k\times(2n-2k+1)$ matrix representatives of the form
\begin{equation}\label{Bnrepresenative}
  A \ \ = \ \ A_{s,p} \ \ = \ \   \(
    \begin{matrix}
    w_1+iw_2 \\
    w_3+iw_4 \\
    \vdots\\
    w_{2s-1}+iw_s \\
    w_{2s+1}\\
    w_{2s+2}\\
    \vdots\\
    w_{2s+p}\\
    0\\
    0\\
    \vdots\\
    0
    \end{matrix}
    \),
\end{equation}
where $\{w_1,\ldots,w_{2n-2k+1}\}$ is a basis of $\C^n$ satisfying $w_j^t J w_\ell=\d_{j=\ell}$.
Indeed, $A_{s,p}$ has rank $p+s$ and $(A_{s,p})^t J A_{s,p}$ has rank $p$.
Part (2) of Lemma~\ref{M12} states that the
  matrices $A_{s,p}$, where $0\le p\le \min\{k,2n-2k+1\}$, $0\le p+s\le \min\{k,2n-2k+1\}$, $2s+p\le k$, and $2(p+s)-(2n-2k+1)\le p$, furnish a complete set of orbit representatives for the action of $GL(k,\C)\times O(n-k+1,n-k)$.  A short computation with stabilizers shows that they are furthermore a complete set of  basepoints for the complex orbits of $L$ on $\frak u_1$.

\subsection{Type $C_n:Sp(2n)$}\label{sec:Cn}

Because most of the statements in section~\ref{sec:Bn} did not distinguish between symmetric and skew-symmetric bilinear forms,
Witt's theorem again applies in a very similar way.  The main difference is a distinction between the standard maximal parabolic subgroups $P_{\a_k}$ for $k<n$  and $k=n$.

\subsubsection{$C_n$ node $k<n$}\label{sec:Cnnodenotn}

Here the semisimple part of $L$ has type $A_{k-1}\times C_{n-k}$ and
the unipotent radical  $U$ is a two-step nilpotent group with $\frak u=\frak u_1\oplus \frak u_2$.
 The action on $\frak u_1$ is   the tensor product of the standard actions of $SL(k)$ and $Sp(2n-2k)$, while the action on $\frak u_2$ is the symmetric square action of $SL(k)$ on symmetric $k$-tensors. The latter occurs for $C_k$ node $k$ and is described in section~\ref{sec:Cnnoden}.

Witt's theorem applies here nearly exactly as it does for $B_n$ in  section~\ref{sec:Bn}.  The main difference is that the rank of $A^t JA$ must be even because $J$ is now skew-symmetric.  Aside from this,  the conditions (\ref{Bnineq1})-(\ref{Bnineq3}) stand after replacing $2n-2k+1$ with $2n-2k$.
Fix a basis $\{w_1,\ldots,w_{n-k},z_1,\ldots,z_{n-k}\}$ of $\C^{2n-2k}$ satisfying
\begin{equation}\label{Cnbasis}
\left\langle x_{i},y_{j}\right\rangle  \ \ = \ \ \delta_{ij}\,,\text{\quad\ }\left\langle
x_{i},x_{j}\right\rangle \ \  = \ \ \left\langle y_{i},y_{j}\right\rangle \  \ = \ \ 0\,.
\end{equation}
Let $A_{p,s}$ denote the $k\times (2n-2k)$ matrix whose first $2p+s$ rows are $x_1,\ldots,x_p$, $y_1,\ldots,y_{p+s}$, and the rest all zeros.  Then $A_{p,s}$ has rank $2p+s$ while $(A_{p,s})^t JA_{p,s}$ has rank $2p$.  This shows that each rank inequality is met.  Another application of lemma~\ref{M12}
and a short stabilizer computation show that
the matrices $A_{p,s}$ for $p,q$ satisfying $0\le 2p \le   \min\{k,2n-2k\}$,  $0\le 2p+s \le   \min\{k,2n-2k\}$, and  $p+s\le n-k$ then furnish a complete set of orbit representatives for $L(\C)$ on $\frak u_1$.

\subsubsection{$C_n$ node $n$}\label{sec:Cnnoden}

Here $[L,L]$ is $SL(n)$ and $U$ is abelian of dimension $\f{n(n+1)}{2}$.  The action on $\frak u$ is the symmetric square action of $SL(n)$ on symmetric $n$-tensors.  This can be naturally viewed as the action of $g\in SL(n)$ on symmetric $n\times n$ matrices $X$ given by $g:X\mapsto gXg^t$.
  Because symmetric matrices can be orthogonally diagonalized, this action has $n+1$ orbits, each of which is represented by a  matrix which has zero entries except for precisely $k$ ones on its diagonal, where $k$ ranges from $0$ to $n$ (see lemma~\ref{M12}).

\subsection{Type $D_4$ : $SO(4,4)$}\label{sec:D4}

Before giving the general theory, we give some detailed examples for special cases that were important in  \cite{GMV} and in section~\ref{sec:newthmpf}.

\begin{center}
\begin{figure}
  \includegraphics[scale=.7]{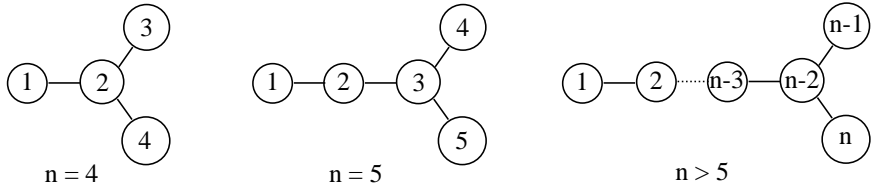}\\
  \caption{$D_n$ Dynkin diagrams}\label{DnDynkins}
\end{figure}
\end{center}

The triality makes $D_4$ exceptional among classical groups.  Let us first consider a maximal parabolic $P=LU$ associated to one of its 3 terminal nodes (numbered 1, 3, or 4 in figure~\ref{DnDynkins}).  Then $L$ is of type $A_3$ and $U$ is 6 dimensional.  Because the largest root of the $D_4$ root system is $\a_1+2\a_2+\a_3+\a_4$, $U$ is abelian and $\frak u=\frak u_1$.  The action of  $L$  on $\frak u$ is the 6-dimensional representation of $SL(4)$ on anti-symmetric tensors.  The general theory of this action is well understood and described in comments below in section~\ref{sec:Dnnodebig}.  The action of $L$ carves the 6-dimensional vector space into 3 orbits:~a zero orbit, one whose basepoint is a root vector for any positive simple root in $\frak u$, and an open dense one whose basepoint is a sum of root vectors for $\a_1+\a_2+\a_4$ and $\a_1+\a_2+\a_3$.

Next, let us consider $P=LU$ associated to the central node (numbered 2).  Then the semisimple part of $L$ has type $SL(2,\R)\times SL(2,\R)\times SL(2,\R)$  and $U$ is 9 dimensional, breaking up as ${\frak u}={\frak u}_1\oplus {\frak u}_2$, where $\frak u_1$ is the 8-dimensional triple tensor product representation of $SL(2,\R)\times SL(2,\R)\times SL(2,\R)$  and $\frak u_2$ is the one dimensional span of $X_{1211}$.   The action on ${\frak u}_1$ breaks up into 7 orbits under $L(\C)$:

\begin{center}\begin{tabular}{||p{3.81cm}p{2.41cm}||p{2.78cm}||}
\hline
 \multicolumn{1}{|p{3.81cm}|}{\centering Orbit Basepoint} &  \multicolumn{1}{p{2.41cm}|}{\centering Dimension} &  \multicolumn{1}{p{2.78cm}|}{\centering Coadjoint orbit intersected} \\
\hline
 \multicolumn{1}{|p{3.81cm}|}{\centering 0101+1110} &  \multicolumn{1}{p{2.41cm}|}{\centering 8} &  \multicolumn{1}{p{2.78cm}|}{\centering 0200} \\
 \cline{1-1} \cline{2-2} \cline{3-3}
 \multicolumn{1}{|p{3.81cm}|}{\centering 0111+1101+1110} &  \multicolumn{1}{p{2.41cm}|}{\centering 7} &  \multicolumn{1}{p{2.78cm}|}{\centering 1011} \\
\hline
 \multicolumn{1}{|p{3.81cm}|}{\centering 0111+1101} &  \multicolumn{1}{p{2.41cm}|}{\centering 5} &  \multicolumn{1}{p{2.78cm}|}{\centering 0002} \\
 \cline{1-1} \cline{2-2} \cline{3-3}
 \multicolumn{1}{|p{3.81cm}|}{\centering 1101+1110} &  \multicolumn{1}{p{2.41cm}|}{\centering 5} &  \multicolumn{1}{p{2.78cm}|}{\centering 2000} \\
 \cline{1-1} \cline{2-2} \cline{3-3}
 \multicolumn{1}{|p{3.81cm}|}{\centering 0111+1110} &  \multicolumn{1}{p{2.41cm}|}{\centering 5} &  \multicolumn{1}{p{2.78cm}|}{\centering 0020} \\
 \cline{1-1} \cline{2-2} \cline{3-3}
 \multicolumn{1}{|p{3.81cm}|}{\centering 1111} &  \multicolumn{1}{p{2.41cm}|}{\centering 4} &  \multicolumn{1}{p{2.78cm}|}{\centering 0100} \\
 \cline{1-1} \cline{2-2} \cline{3-3}
 \multicolumn{1}{|p{3.81cm}|}{\centering 0000} &  \multicolumn{1}{p{2.41cm}|}{\centering 0} &  \multicolumn{1}{p{2.78cm}|}{\centering 0000} \\
\hline
\end{tabular}\end{center}%
Since $\frak u_2$ is a line, $L$ acts by multiplication by a scalar on it.  Hence there are two orbits:~zero and nonzero elements.

\subsection{Type $D_5$ : $SO(5,5)$}

There are essentially 4 different cases here, owing to the fact that the two spinor nodes 4 and 5 are interchanged by a diagram automorphism.  The behavior for the maximal parabolics associated to the terminal nodes 1, 4, and 5 is similar to that described for $D_4$, but the internal Chevalley modules for nodes 2 and 3 are new.   The actions for all nodes are listed in the following table.  The   actions are described by the
 highest weight of the representation of the semisimple group $[L,L]$. For uniformity, the numbering of the fundamental weights $\varpi_1, \varpi_2, \ldots$ corresponds  the $D_5$ Dynkin diagram, not a standard numbering scheme of the individual Levi factors.  We use the same convention for higher rank groups whenever such an ambiguity arises.

\begin{center}\begin{tabular}{|c|c|c|c|}
\hline
 Node &  Type of $[L,L]$ &  $i \, = \, 1$ & $i \, = \, 2$ \\
\hline
 1 &  \multicolumn{1}{l|}{$SO(4,4)$} &  Spin representation &  \\
  &  $\dim \frak u_i$ &  8 &  \\
  &  action &  $\varpi_2$ &  \\
\hline
 2 &  \multicolumn{1}{l|}{$SL(2)\times SO(3,3)$} &  Standard $\otimes$ Standard & Trivial \\
  &  $\dim \frak u_i$ &  12 & 1 \\
  &  action &  $\varpi_1+\varpi_3$ &  \\
\hline
 3 &  \multicolumn{1}{l|}{$SL(3)\times SL(2)\times SL(2)$} &  Tensor product & Standard \\
  &  $\dim \frak u_i$ &  12 & 3 \\
  &  action &  $\varpi_1+\varpi_4+\varpi_5$ & $\varpi_2$ \\
\hline
 4 &  \multicolumn{1}{l|}{$SL(5)$} &  Exterior square &  \\
  &  $\dim \frak u_i$ &  10 &  \\
  &  action &  $\varpi_2$ &  \\
\hline
 5 &  \multicolumn{1}{l|}{$SL(5)$} &  Exterior square &  \\
  &  $\dim \frak u_i$ &  10 &  \\
  &  action &  $\varpi_2$ &  \\
\hline
\end{tabular}\end{center}

\subsubsection{$D_5$ node 1}\label{sec:D5node1}

The parabolic $P=LU$ associated to node 1 has $[L,L]$ of type $SO(4,4)$, and $U$ an 8-dimensional abelian group.   The action on $\frak u = {\frak u}_1$ is the 8-dimensional vector representation  and has 3 distinct complex orbits:
\begin{center}\begin{tabular}{|cp{1.97cm}||p{2.53cm}||}
\hline
 \multicolumn{1}{|c|}{Orbit Basepoint} &  \multicolumn{1}{p{1.97cm}|}{\centering Dimension} &  \multicolumn{1}{p{2.53cm}|}{\centering Coadjoint orbit intersected} \\
\hline
 \multicolumn{1}{|c|}{11101+11110} &  \multicolumn{1}{p{1.97cm}|}{\centering 8} &  \multicolumn{1}{p{2.53cm}|}{\centering 20000} \\
 \hline
 \multicolumn{1}{|c|}{12211} &  \multicolumn{1}{p{1.97cm}|}{\centering 7} &  \multicolumn{1}{p{2.53cm}|}{\centering 01000} \\
 \hline
 \multicolumn{1}{|c|}{00000} &  \multicolumn{1}{p{1.97cm}|}{\centering 0} &  \multicolumn{1}{p{2.53cm}|}{\centering 00000} \\
\hline
\end{tabular}\end{center}

\subsubsection{$D_5$ node 2}\label{sec:D5node2}

The parabolic $P=LU$ associated to node 2 has $[L,L]$ of type $SL(2)\times SL(4)$, and $U$ a 13-dimensional Heisenberg group.  Thus $\frak u={\frak u}_1\oplus {\frak u}_2$ where $\frak u_1$ is 12 dimensional and $\frak u_2$ is 1 dimensional.  The action on ${\frak u}_1$ is the tensor product of the standard representation of $SL(2)$ with the exterior square representation of $SL(4)$ (that occurred in section~\ref{sec:D4}).  It breaks up into 6 distinct complex orbits:

\begin{center}\begin{tabular}{||p{3.72cm}p{2.09cm}||p{2.5cm}||}
\hline
 \multicolumn{1}{|p{3.72cm}|}{\centering Orbit Basepoint} &  \multicolumn{1}{p{2.09cm}|}{\centering Dimension} &  \multicolumn{1}{p{2.5cm}|}{\centering Coadjoint orbit intersected} \\
\hline
 \multicolumn{1}{|p{3.72cm}|}{\centering 01101+11110} &  \multicolumn{1}{p{2.09cm}|}{\centering 12} &  \multicolumn{1}{p{2.5cm}|}{\centering 02000} \\
 \cline{1-1} \cline{2-2} \cline{3-3}
 \multicolumn{1}{|p{3.72cm}|}{\centering 01211+11101+11110} &  \multicolumn{1}{p{2.09cm}|}{\centering 11} &  \multicolumn{1}{p{2.5cm}|}{\centering 10100} \\
  \cline{1-1} \cline{2-2} \cline{3-3}
 \multicolumn{1}{|p{3.72cm}|}{\centering 01211+11111} &  \multicolumn{1}{p{2.09cm}|}{\centering 9} &  \multicolumn{1}{p{2.5cm}|}{\centering 00011} \\
 \cline{1-1} \cline{2-2} \cline{3-3}
 \multicolumn{1}{|p{3.72cm}|}{\centering 11101+11110} &  \multicolumn{1}{p{2.09cm}|}{\centering 7} &  \multicolumn{1}{p{2.5cm}|}{\centering 20000} \\
\cline{1-1} \cline{2-2} \cline{3-3}
 \multicolumn{1}{|p{3.72cm}|}{\centering 11211} &  \multicolumn{1}{p{2.09cm}|}{\centering 6} &  \multicolumn{1}{p{2.5cm}|}{\centering 01000} \\
 \cline{1-1} \cline{2-2} \cline{3-3}
 \multicolumn{1}{|p{3.72cm}|}{\centering 00000} &  \multicolumn{1}{p{2.09cm}|}{\centering 0} &  \multicolumn{1}{p{2.5cm}|}{\centering 00000} \\
\hline
\end{tabular}\end{center}

\noindent As before, $L$ acts on the 1 dimensional subspace $\frak u_2$ with two orbits:~zero and nonzero.

\subsubsection{$D_5$ node 3}\label{sec:D5node3}

The parabolic $P=LU$ associated to node 3 has $[L,L]$ of type $SL(3) \times SL(2) \times SL(2)$, while $U$ is a 15-dimensional two-step nilpotent group such that $\frak u = \frak u_1 \oplus \frak u_2$, with $\frak u_1$ the 12 dimensional representation of $[L,L]$ from the tensor product action of the three Levi factors, and ${\frak u}_2$ a 3 dimensional vector space with a standard $SL(3)$ action.  There are 9 complex orbits of   $L$ on ${\frak u}_1$:
\begin{center}\begin{tabular}{p{4.59cm}p{1.75cm}p{2.38cm}}
 \cline{1-1} \cline{2-2} \cline{3-3}
 \multicolumn{1}{|p{4.59cm}|}{\centering Orbit Basepoint} &  \multicolumn{1}{p{1.75cm}|}{\centering Dimension} &  \multicolumn{1}{p{2.38cm}|}{\centering Coadjoint orbit intersected} \\
\cline{1-1} \cline{2-2} \cline{3-3}
 \multicolumn{1}{|p{4.59cm}|}{\centering 00111+01101+01110+11100} &  \multicolumn{1}{p{1.75cm}|}{\centering 12} &  \multicolumn{1}{p{2.38cm}|}{\centering 00200} \\
 \cline{1-1} \cline{2-2} \cline{3-3}
 \multicolumn{1}{|p{4.59cm}|}{\centering 00111+01101+11110} &  \multicolumn{1}{p{1.75cm}|}{\centering 11} &  \multicolumn{1}{p{2.38cm}|}{\centering 01011} \\
 \cline{1-1} \cline{2-2} \cline{3-3}
 \multicolumn{1}{|p{4.59cm}|}{\centering 01101+11110} &  \multicolumn{1}{p{1.75cm}|}{\centering 10} &  \multicolumn{1}{p{2.38cm}|}{\centering 02000} \\
 \cline{1-1} \cline{2-2} \cline{3-3}
 \multicolumn{1}{|p{4.59cm}|}{\centering 01111+11101+11110} &  \multicolumn{1}{p{1.75cm}|}{\centering 9} &  \multicolumn{1}{p{2.38cm}|}{\centering 10100} \\
 \cline{1-1} \cline{2-2} \cline{3-3}
 \multicolumn{1}{|p{4.59cm}|}{\centering 01111+11101} &  \multicolumn{1}{p{1.75cm}|}{\centering 7} &  \multicolumn{1}{p{2.38cm}|}{\centering 00011} \\
 \cline{1-1} \cline{2-2} \cline{3-3}
 \multicolumn{1}{|p{4.59cm}|}{\centering 01111+11110} &  \multicolumn{1}{p{1.75cm}|}{\centering 7} &  \multicolumn{1}{p{2.38cm}|}{\centering 00011} \\
 \cline{1-1} \cline{2-2} \cline{3-3}
 \multicolumn{1}{|p{4.59cm}|}{\centering 11101+11110} &  \multicolumn{1}{p{1.75cm}|}{\centering 6} &  \multicolumn{1}{p{2.38cm}|}{\centering 20000} \\
 \cline{1-1} \cline{2-2} \cline{3-3}
 \multicolumn{1}{|p{4.59cm}|}{\centering 11111} &  \multicolumn{1}{p{1.75cm}|}{\centering 5} &  \multicolumn{1}{p{2.38cm}|}{\centering 01000} \\
 \cline{1-1} \cline{2-2} \cline{3-3}
 \multicolumn{1}{|p{4.59cm}|}{\centering 00000} &  \multicolumn{1}{p{1.75cm}|}{\centering 0} &  \multicolumn{1}{p{2.38cm}|}{\centering 00000} \\
 \cline{1-1} \cline{2-2} \cline{3-3}
\end{tabular}\end{center}

\noindent%
 Since the action on $\frak u_2=\C X_{01211}+\C X_{11211}+\C X_{12211}$ is the standard action of $SL(3)$, it has 2 orbits:~zero and nonzero.  It occurs for the lower rank group $SL(4)$ and node 3.  A representative for the larger orbit is $X_{12211}$, which intersects the minimal coadjoint orbit, 01000.

\subsubsection{$D_5$ nodes 4 and 5}\label{sec:D5node4}

In the case of the spinor nodes 4 and 5, the action breaks up into three orbits, of dimensions 10, 7, and 0. This is the 10 dimensional exterior square representation of $SL(5)$.  These orbits intersect the coadjoint nilpotent orbits with weighted Dynkin diagrams $00011$, $01000$, and $00000$, respectively.

\begin{center}\begin{tabular}{|cp{1.88cm}||p{2.28cm}||}
\hline
 \multicolumn{1}{|c|}{Orbit Basepoint} &  \multicolumn{1}{p{1.88cm}|}{\centering Dimension} &  \multicolumn{1}{p{2.28cm}|}{\centering Coadjoint orbit intersected} \\
\hline
 \multicolumn{1}{|c|}{01211+11111} &  \multicolumn{1}{p{1.88cm}|}{\centering 10} &  \multicolumn{1}{p{2.28cm}|}{\centering 00011} \\
 \cline{1-1} \cline{2-2} \cline{3-3}
 \multicolumn{1}{|c|}{12211} &  \multicolumn{1}{p{1.88cm}|}{\centering 7} &  \multicolumn{1}{p{2.28cm}|}{\centering 01000} \\
 \cline{1-1} \cline{2-2} \cline{3-3}
 \multicolumn{1}{|c|}{00000} &  \multicolumn{1}{p{1.88cm}|}{\centering 0} &  \multicolumn{1}{p{2.28cm}|}{\centering 00000} \\
\hline
\end{tabular}\end{center}

\subsection{Type $D_n$ : $SO(n,n)$}

We limit the discussion here to $n>5$, since the lower rank cases have already been discussed.

\subsubsection{$D_n$ node $k<n-1$}\label{sec:Dnnodesmall}

The Levi component of the standard maximal parabolic subgroup $P_{\a_k}$ is of type $A_{k-1}\times D_{n-k}$. The unipotent radical is nonabelian except for $k=1$.  The action on ${\frak u}_2$ is the symmetric square action of $SL(k)$ on symmetric $k$-tensors and arises for $D_k$ node $k$; its orbits are described in section~\ref{sec:Dnnodebig}.  We thus focus on $\frak u_1$ here, on which $L$ acts by the tensor product of the standard representation of $SL(k)$ with the vector representation of $SO(n-k,n-k)$.

The  theory here strongly resembles the case of $B_n$ from section~\ref{sec:Bn} because Witt's theorem again applies nearly verbatim.  The rank restriction (\ref{Bnineq1}) applies directly, while (\ref{Bnineq2}) and (\ref{Bnineq3}) need only to be adjusted by replacing $2n-2k+1$ by $2n-2k$.  Using the matrices $A_{s,p}$ defined in (\ref{Bnrepresenative}), but instead with a basis of dimension $2n-2k$ of course, we obtain distinct  orbit representatives $A_{s,p}$, where $0\le p\le \min\{k,2n-2k\}$, $0\le p+s\le \min\{k,2n-2k\}$, $2s+p\le k$, and $2(p+s)-(2n-2k)\le p$,
for each of the possible configuration of ranks satisfying the inequalities.  However, there is an additional wrinkle in this case:~lemma~\ref{M12} is an assertion about orbit representatives of $GL(k)\times O(n-k,n-k)$.  The Levi is connected, and thus its action has  a second orbit having $\operatorname{rank}(A)=n-k$ and $A^t JA=0$ besides  the one generated by $A_{n-k,0}$.  A representative for this orbit can be given by replacing $w_1+iw_2$ in (\ref{Bnrepresenative}) with $w_1-iw_2$.  This matrix, when combined with the $A_{s,p}$ just listed, comprise a full set of orbit representatives for the complex Levi action on $\frak u_1$.
  This is the only time this phenomenon comes up directly for actions on $\frak u_1$, though note that the action on $\frak u_2$ in section~\ref{sec:E8node5} is equivalent to a $D_8$ action; the second and third of its 18 dimensional orbits are similarly related.

\subsubsection{$D_n$ node $k=n-1$ or $n$}\label{sec:Dnnodebig}

The two cases here are related by a Dynkin diagram symmetry.  The Levi component has $[L,L]$ of type $SL(n)$
and the unipotent radical is abelian of dimension $\f{n(n-1)}{2}$.
The action of $L$ on $\frak u$ is  the exterior square action of $SL(n)$ on antisymmetric $n$-tensors.  Analogously to the situation in section~\ref{sec:Cnnoden},
 this can be naturally viewed as the action of $g\in SL(n)$ on antisymmetric $n\times n$ matrices $X$ given by $g:X\mapsto gXg^t$.  Lemma~\ref{M12} applies here, and shows that the action
 has $\lfloor \f{n}{2} \rfloor+1$ orbits, given by even rank matrices of the form
 \begin{equation}\label{Dnnodenreps}
    A_p \ \ = \ \ \tthree{}{}{J_p}{}{I_{n-2p}}{}{-J_p}{}{} \ , \ \ 0 \,\le \, 2p \,\le n\,,
 \end{equation}
 where $J_p$ is the reverse $p\times p$ identity matrix.

\begin{center}
\begin{figure}
  \includegraphics{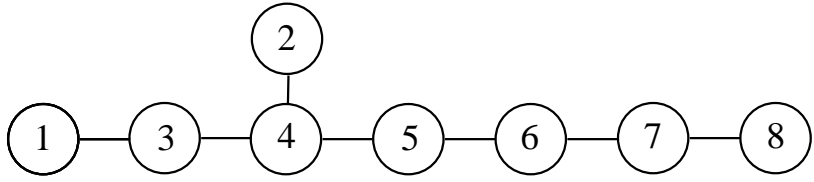}\\
  \caption{$E_n$ Dynkin diagrams.  The diagram for $E_8$ is shown here, while the diagrams for $E_7$ and $E_6$ are given by removing nodes $8$ and then $7$, respectively.}\label{EnDynkins}
\end{figure}
\end{center}

\subsection{Type $E_6$}

Recall from Figure~\ref{EnDynkins} that nodes 5 and 6 are equivalent, respectively, to nodes 3 and 1, so it suffices to discuss nodes 1, 2, 3, and 4 here.  The internal Chevalley modules are described by the table below.   In this and the analogous tables for other groups we will abbreviate the types of some semisimple groups below to their Cartan labels, as well as some of the descriptions of the representations.  The weights will again be listed  using the numbering of the ambient $E_6$ Dynkin diagram.

\begin{center}\begin{tabular}{|c|c|c|c|c|}
\hline
 Node &  Type of $[L,L]$ &  $i\,=\,1$ &  $i\,=\,2$ & $i\,=\,3$ \\
\hline
 1 &  \multicolumn{1}{l|}{$SO(5,5)$} &  Spin representation &   &  \\
  &  $\dim \frak u_i$ &  16 &   &  \\
  &  action &  $\varpi_2$ &   &  \\
\hline
 2 &  \multicolumn{1}{l|}{$SL(6)$} &  Exterior cube &   &  \\
  &  $\dim \frak u_i$ &  20 &  1 &  \\
  &  action &  $\varpi_4$ &   &  \\
\hline
 3 &  \multicolumn{1}{l|}{$SL(2) \times SL(5)$} &  Standard $\otimes$ Exterior square &   &  \\
  &  $\dim \frak u_i$ &  20 &  5 &  \\
  &  action &  $\varpi_1+\varpi_5$ & $\varpi_2$   &  \\
\hline
 4 &  \multicolumn{1}{l|}{$A_2\times A_1 \times A_2$} &  Tensor product &  &  \\
  &  $\dim \frak u_i$ &  18 &  9 & 2 \\
  &  action &  $\varpi_1+\varpi_2+\varpi_6$ & $\varpi_3+\varpi_5$  & $\varpi_2$ \\
\hline
 5 &  \multicolumn{1}{l|}{$SL(5)\times SL(2)$} &  Standard $\otimes$ Exterior square &   &  \\
  &  $\dim \frak u_i$ &  20 &  5 &  \\
  &  action &  $\varpi_3+\varpi_6$ &  $\varpi_2$ &  \\
\hline
 6 &  \multicolumn{1}{l|}{$SO(5,5)$} &  Spin &   &  \\
  &  $\dim \frak u_i$ &  16 &   &  \\
  &  action &  $\varpi_2$ &   &  \\
\hline
\end{tabular}\end{center}

\subsubsection{$E_6$ Node 1}\label{sec:E6node1}

Here $P=LU$ has $[L,L]$ of type $SO(5,5)$ and $U$ is 16 dimensional and abelian.  The action of $L$ on $\frak u = \frak u_1$ is the 16-dimensional spin representation.  It has 3 orbits:

\begin{center}\begin{tabular}{|c|p{1.81cm}||p{2.34cm}||}
\hline
 Orbit Basepoint &  \multicolumn{1}{p{1.81cm}|}{\centering Dimension} &  \multicolumn{1}{p{2.34cm}|}{\centering Coadjoint orbit intersected} \\
\hline
 \multicolumn{1}{|c|}{111221+112211} &  \multicolumn{1}{p{1.81cm}|}{\centering 16} &  \multicolumn{1}{p{2.34cm}|}{\centering 100001} \\
 \cline{1-1} \cline{2-2} \cline{3-3}
 \multicolumn{1}{|c|}{122321} &  \multicolumn{1}{p{1.81cm}|}{\centering 11} &  \multicolumn{1}{p{2.34cm}|}{\centering 010000} \\
 \cline{1-1} \cline{2-2} \cline{3-3}
 000000 &  \multicolumn{1}{p{1.81cm}|}{\centering 0} &  \multicolumn{1}{p{2.34cm}|}{\centering 000000} \\
\hline
\end{tabular}\end{center}

\subsubsection{$E_6$ Node 2}\label{sec:E6node2}

 In this case $[L,L]$ is of type $SL(6)$ and $U$ is a 21-dimensional Heisenberg group.  The action on the 20-dimensional space ${\frak u_1}$ is  the exterior cube representation, and breaks up into 5 orbits:
\begin{center}\begin{tabular}{||p{4.34cm}p{1.94cm}||p{2.25cm}||}
\hline
 \multicolumn{1}{|p{4.34cm}|}{\raggedright \centering Orbit Basepoint} &  \multicolumn{1}{p{1.94cm}|}{\centering Dimension} &  \multicolumn{1}{p{2.25cm}|}{\centering Coadjoint orbit intersected} \\
\hline
 \multicolumn{1}{|p{4.34cm}|}{\raggedright \centering 010111+112210} &  \multicolumn{1}{p{1.94cm}|}{\centering 20} &  \multicolumn{1}{p{2.25cm}|}{\centering 020000} \\
 \cline{1-1} \cline{2-2} \cline{3-3}
 \multicolumn{1}{|p{4.34cm}|}{\raggedright \centering 011221+111211+112210} &  \multicolumn{1}{p{1.94cm}|}{\centering 19} &  \multicolumn{1}{p{2.25cm}|}{\centering 000100} \\
 \cline{1-1} \cline{2-2} \cline{3-3}
 \multicolumn{1}{|p{4.34cm}|}{\raggedright \centering 111221+112211} &  \multicolumn{1}{p{1.94cm}|}{\centering 15} &  \multicolumn{1}{p{2.25cm}|}{\centering 100001} \\
 \cline{1-1} \cline{2-2} \cline{3-3}
 \multicolumn{1}{|p{4.34cm}|}{\raggedright \centering 112321} &  \multicolumn{1}{p{1.94cm}|}{\centering 10} &  \multicolumn{1}{p{2.25cm}|}{\centering 010000} \\
 \cline{1-1} \cline{2-2} \cline{3-3}
 \multicolumn{1}{|p{4.34cm}|}{\raggedright \centering 000000} &  \multicolumn{1}{p{1.94cm}|}{\centering 0} &  \multicolumn{1}{p{2.25cm}|}{\centering 000000} \\
\hline
\end{tabular}\end{center}

The action on the one dimensional piece $\frak u_2$ has two orbits:~zero and nonzero.

\subsubsection{$E_6$ Node 3}\label{sec:E6node3}

In this case $[L,L]$ is of type  $SL(2)\times SL(5)$ and $U$ is a 25-dimensional two-step unipotent group.  The Lie algebra decomposes as
${\frak u}=\frak u_1\oplus \frak u_2$ with $\dim \frak u_1=20$ and $\dim\frak u_2=5$.  The action on $\frak u_1$ is the tensor product of the standard representation of $SL(2)$ with the 10 dimensional exterior square representation of $SL(5)$, and has 8 orbits:
\begin{center}\begin{tabular}{p{5.28cm}p{1.94cm}p{2.38cm}}
 \cline{1-1} \cline{2-2} \cline{3-3}
 \multicolumn{1}{|p{5.28cm}|}{\centering Orbit Basepoint} &  \multicolumn{1}{p{1.94cm}|}{\centering Dimension} &  \multicolumn{1}{p{2.38cm}|}{\centering Coadjoint orbit intersected} \\
\cline{1-1} \cline{2-2} \cline{3-3}
 \multicolumn{1}{|p{5.28cm}|}{\centering 011111+011210+101111+111110} &  \multicolumn{1}{p{1.94cm}|}{\centering 20} &  \multicolumn{1}{p{2.38cm}|}{\centering 001010} \\
 \cline{1-1} \cline{2-2} \cline{3-3}
 \multicolumn{1}{|p{5.28cm}|}{\centering 011111+101111+111210} &  \multicolumn{1}{p{1.94cm}|}{\centering 18} &  \multicolumn{1}{p{2.38cm}|}{\centering 110001} \\
 \cline{1-1} \cline{2-2} \cline{3-3}
 \multicolumn{1}{|p{5.28cm}|}{\centering 011111+111210} &  \multicolumn{1}{p{1.94cm}|}{\centering 16} &  \multicolumn{1}{p{2.38cm}|}{\centering 020000} \\
 \cline{1-1} \cline{2-2} \cline{3-3}
 \multicolumn{1}{|p{5.28cm}|}{\centering 011221+111111+111210} &  \multicolumn{1}{p{1.94cm}|}{\centering 15} &  \multicolumn{1}{p{2.38cm}|}{\centering 000100} \\
 \cline{1-1} \cline{2-2} \cline{3-3}
 \multicolumn{1}{|p{5.28cm}|}{\centering 011221+111211} &  \multicolumn{1}{p{1.94cm}|}{\centering 12} &  \multicolumn{1}{p{2.38cm}|}{\centering 100001} \\
 \cline{1-1} \cline{2-2} \cline{3-3}
 \multicolumn{1}{|p{5.28cm}|}{\centering 111111+111210} &  \multicolumn{1}{p{1.94cm}|}{\centering 11} &  \multicolumn{1}{p{2.38cm}|}{\centering 100001} \\
 \cline{1-1} \cline{2-2} \cline{3-3}
 \multicolumn{1}{|p{5.28cm}|}{\centering 111221} &  \multicolumn{1}{p{1.94cm}|}{\centering 8} &  \multicolumn{1}{p{2.38cm}|}{\centering 010000} \\
 \cline{1-1} \cline{2-2} \cline{3-3}
 \multicolumn{1}{|p{5.28cm}|}{\centering 000000} &  \multicolumn{1}{p{1.94cm}|}{\centering 0} &  \multicolumn{1}{p{2.38cm}|}{\centering 000000} \\
 \cline{1-1} \cline{2-2} \cline{3-3}
\end{tabular}\end{center}

The action on $\frak u_2$ is the 5-dimensional action of $SL(5)$, and breaks up into 2 orbits: zero and nonzero.  A representative for the big orbit is the highest root $122321$, which lies in the minimal coadjoint nilpotent orbit $010000$.

\subsubsection{$E_6$ Node 4}\label{sec:E6node4}

This is the first case with a 3-step nilpotent group.  We have $P=LU$ where $[L,L]$ is of type $SL(3)\times SL(3)\times SL(2)$, and $U$ is 29 dimensional  with
\begin{equation}\label{e6node4}
    \frak u \ \ = \ \ \frak u_1 \,\oplus\,\frak u_2 \,\oplus\,\frak u_3\, , \ \ \dim\,\frak u_1 \,=\,18\,,
    \ \dim\,\frak u_2 \,=\,9\,, \ \text{and}\,\dim\,\frak u_3 \,=\,2\,.
\end{equation}
The action on the 18 dimensional piece $\frak u_1$ is the tensor product of standard representations of the three factors, and has 18 orbits:

\begin{center}\begin{tabular}{|cp{1.84cm}||p{1.81cm}||}
\hline
 \multicolumn{1}{|c|}{Orbit Basepoint} &  \multicolumn{1}{p{1.84cm}|}{\centering Dimension} &  \multicolumn{1}{p{1.81cm}|}{\centering Coadjoint orbit intersected} \\
\hline
 \multicolumn{1}{|c|}{000111+010111+011110+101100} &  \multicolumn{1}{p{1.84cm}|}{\centering 18} &  \multicolumn{1}{p{1.81cm}|}{\centering 000200} \\
 \cline{1-1} \cline{2-2} \cline{3-3}
 \multicolumn{1}{|c|}{001111+010110+101110+111100} &  \multicolumn{1}{p{1.84cm}|}{\centering 17} &  \multicolumn{1}{p{1.81cm}|}{\centering 011010} \\
 \cline{1-1} \cline{2-2} \cline{3-3}
 \multicolumn{1}{|c|}{001111+010111+011110+101110+111100} &  \multicolumn{1}{p{1.84cm}|}{\centering 16} &  \multicolumn{1}{p{1.81cm}|}{\centering 100101} \\
 \cline{1-1} \cline{2-2} \cline{3-3}
 \multicolumn{1}{|c|}{010110+011100+101111} &  \multicolumn{1}{p{1.84cm}|}{\centering 15} &  \multicolumn{1}{p{1.81cm}|}{\centering 120001} \\
 \cline{1-1} \cline{2-2} \cline{3-3}
 \multicolumn{1}{|c|}{001111+010111+101110+111100} &  \multicolumn{1}{p{1.84cm}|}{\centering 14} &  \multicolumn{1}{p{1.81cm}|}{\centering 200002} \\
 \cline{1-1} \cline{2-2} \cline{3-3}
 \multicolumn{1}{|c|}{001111+010111+011110+101110} &  \multicolumn{1}{p{1.84cm}|}{\centering 14} &  \multicolumn{1}{p{1.81cm}|}{\centering 001010} \\
 \cline{1-1} \cline{2-2} \cline{3-3}
 \multicolumn{1}{|c|}{001111+011110+101110+111100} &  \multicolumn{1}{p{1.84cm}|}{\centering 14} &  \multicolumn{1}{p{1.81cm}|}{\centering 001010} \\
 \cline{1-1} \cline{2-2} \cline{3-3}
 \multicolumn{1}{|c|}{010111+011110+101111+111100} &  \multicolumn{1}{p{1.84cm}|}{\centering 14} &  \multicolumn{1}{p{1.81cm}|}{\centering 001010} \\
 \cline{1-1} \cline{2-2} \cline{3-3}
 \multicolumn{1}{|c|}{001111+010111+111110} &  \multicolumn{1}{p{1.84cm}|}{\centering 13} &  \multicolumn{1}{p{1.81cm}|}{\centering 110001} \\
 \cline{1-1} \cline{2-2} \cline{3-3}
 \multicolumn{1}{|c|}{011111+101110+111100} &  \multicolumn{1}{p{1.84cm}|}{\centering 13} &  \multicolumn{1}{p{1.81cm}|}{\centering 110001} \\
 \cline{1-1} \cline{2-2} \cline{3-3}
 \multicolumn{1}{|c|}{001111+111110} &  \multicolumn{1}{p{1.84cm}|}{\centering 12} &  \multicolumn{1}{p{1.81cm}|}{\centering 020000} \\
 \cline{1-1} \cline{2-2} \cline{3-3}
 \multicolumn{1}{|c|}{011111+101111+111110} &  \multicolumn{1}{p{1.84cm}|}{\centering 11} &  \multicolumn{1}{p{1.81cm}|}{\centering 000100} \\
 \cline{1-1} \cline{2-2} \cline{3-3}
 \multicolumn{1}{|c|}{010111+011110+111100} &  \multicolumn{1}{p{1.84cm}|}{\centering 10} &  \multicolumn{1}{p{1.81cm}|}{\centering 000100} \\
 \cline{1-1} \cline{2-2} \cline{3-3}
 \multicolumn{1}{|c|}{011111+111110} &  \multicolumn{1}{p{1.84cm}|}{\centering 9} &  \multicolumn{1}{p{1.81cm}|}{\centering 100001} \\
 \cline{1-1} \cline{2-2} \cline{3-3}
 \multicolumn{1}{|c|}{011111+101111} &  \multicolumn{1}{p{1.84cm}|}{\centering 8} &  \multicolumn{1}{p{1.81cm}|}{\centering 100001} \\
 \cline{1-1} \cline{2-2} \cline{3-3}
 \multicolumn{1}{|c|}{101111+111110} &  \multicolumn{1}{p{1.84cm}|}{\centering 8} &  \multicolumn{1}{p{1.81cm}|}{\centering 100001} \\
 \cline{1-1} \cline{2-2} \cline{3-3}
 \multicolumn{1}{|c|}{111111} &  \multicolumn{1}{p{1.84cm}|}{\centering 6} &  \multicolumn{1}{p{1.81cm}|}{\centering 010000} \\
 \cline{1-1} \cline{2-2} \cline{3-3}
 \multicolumn{1}{|c|}{000000} &  \multicolumn{1}{p{1.84cm}|}{\centering 0} &  \multicolumn{1}{p{1.81cm}|}{\centering 000000} \\
\hline
\end{tabular}\end{center}

The action on the 9-dimensional piece $\frak u_2$ occurs for $SL(6)$, node 3, and has 4 orbits that are parameterized by rank.     The nontrivial orbits there have dimensions 9, 8, and 5, with basepoints 00111 + 01110 + 11100, 01111 + 11110, and 11111.  Hence the orbits on $\frak u_2$ are given as follows:

\begin{center}\begin{tabular}{||p{3.94cm}p{1.97cm}||p{2.31cm}||}
\hline
 \multicolumn{1}{|p{3.94cm}|}{\centering Orbit Basepoint} &  \multicolumn{1}{p{1.97cm}|}{\centering Dimension} &  \multicolumn{1}{p{2.31cm}|}{\centering Coadjoint orbit intersected} \\
\hline
 \multicolumn{1}{|p{3.94cm}|}{\centering 112210+111211+011221} &  \multicolumn{1}{p{1.97cm}|}{\centering 9} &  \multicolumn{1}{p{2.31cm}|}{\centering 000100} \\
 \cline{1-1} \cline{2-2} \cline{3-3}
 \multicolumn{1}{|p{3.94cm}|}{\centering 111221+112211} &  \multicolumn{1}{p{1.97cm}|}{\centering 8} &  \multicolumn{1}{p{2.31cm}|}{\centering 100001} \\
 \cline{1-1} \cline{2-2} \cline{3-3}
 \multicolumn{1}{|p{3.94cm}|}{\centering 112221} &  \multicolumn{1}{p{1.97cm}|}{\centering 5} &  \multicolumn{1}{p{2.31cm}|}{\centering 010000} \\
 \cline{1-1} \cline{2-2} \cline{3-3}
 \multicolumn{1}{|p{3.94cm}|}{\centering 000000} &  \multicolumn{1}{p{1.97cm}|}{\centering 0} &  \multicolumn{1}{p{2.31cm}|}{\centering 000000} \\
\hline
\end{tabular}\end{center}

The action on $\frak u_3$ is the standard action of $SL(2)$, and breaks up into 2 orbits: zero and nonzero.  A representative for the big orbit is the highest root $122321$, which lies in the minimal coadjoint nilpotent orbit $010000$.

\subsection{Type $E_7$}
The internal Chevalley actions for maximal parabolics are given as follows, with the same labeling conventions used for $D_5$ and $E_6$.
\begin{center}\begin{tabular}{|c|c|c|c|c|c|}
\hline
 Node &  Type of $[L,L]$ &  $i\,=\,1$ &  $i\,=\,2$ &  $i\,=\,3$ & $i\,=\,4$ \\
\hline
 1 &  \multicolumn{1}{l|}{$SO(6,6)$} &  Spin &   &   &  \\
  &  $\dim \frak u_i$ &  32 &  1 &   &  \\
  &  action &  $\varpi_3$ &  $0$ &   &  \\
\hline
 2 &  \multicolumn{1}{l|}{$SL(7)$} & Ext. cube   &   &   &  \\
  &  $\dim \frak u_i$ &  35 &  7 &   &  \\
  &  action &  $\varpi_5$ &  $\varpi_1$ &   &  \\
\hline
 3 &  \multicolumn{1}{l|}{$SL(2) \times SL(6)$} &  Stan. $\otimes$ Ext. sq. &   &   &  \\
  &  $\dim \frak u_i$ &  30 &  15 &  2 &  \\
  &  action &  $\varpi_1+\varpi_6$ &  $\varpi_4$ &  $\varpi_1$ &  \\
\hline
 4 &  \multicolumn{1}{l|}{$A_2\times A_1\times A_3$} &  Tensor &   &   &  \\
  &  $\dim \frak u_i$ &  24 &  18 &  8 & 3 \\
  &  action &  $\varpi_1+\varpi_2+\varpi_7$ &  $\varpi_3+\varpi_6$ &  $\varpi_2+\varpi_5$ & $\varpi_1$ \\
\hline
 5 &  \multicolumn{1}{l|}{$SL(5)\times SL(3)$} &  Ext. sq. $\otimes$ Stan. &   &   &  \\
  &  $\dim \frak u_i$ &  30 &  15 &  5 &  \\
  &  action &  $\varpi_3+\varpi_7$ &  $\varpi_2+\varpi_6$ &  $\varpi_1$ &  \\
\hline
 6 &  \multicolumn{1}{l|}{$D_5\times A_1$} &  Spin $\otimes$ Stan. &  Vector &   &  \\
  &  $\dim \frak u_i$ &  32 &  10 &   &  \\
  &  action &  $\varpi_2+\varpi_7$ &  $\varpi_1$ &   &  \\
\hline
 7 &  \multicolumn{1}{l|}{$E_6$} &  Stan.  &   &   &  \\
  &  $\dim \frak u_i$ &  27 &   &   &  \\
  &  action &  $\varpi_1$ &   &   &  \\
\hline
\end{tabular}\end{center}

\subsubsection{$E_7$ Node 1}\label{sec:E7node1}

In this case $P=LU$ where $U$ is a 33-dimensional Heisenberg group and $\frak u=\frak u_1\oplus \frak u_2$, with $\dim\frak u_1=32$ and $\dim\frak u_2=1$.  The semisimple part $[L,L]$ of $L$ has  type $SO(6,6)$, and acts on $\frak u_1$ by the spin representation with 5 orbits:

\begin{center}\begin{tabular}{||p{4.56cm}||p{1.78cm}||p{2.25cm}||}
\hline
 \multicolumn{1}{|p{4.56cm}|}{\centering Orbit Basepoint} &  \multicolumn{1}{p{1.78cm}|}{\centering Dimension} &  \multicolumn{1}{p{2.25cm}|}{\centering Coadjoint orbit intersected} \\
\hline
 \multicolumn{1}{|p{4.56cm}|}{\centering 1011111+1223210} &  \multicolumn{1}{p{1.78cm}|}{\centering 32} &  \multicolumn{1}{p{2.25cm}|}{\centering 2000000} \\
 \cline{1-1} \cline{2-2} \cline{3-3}
 \multicolumn{1}{|p{4.56cm}|}{\centering 1122221+1123211+1223210} &  \multicolumn{1}{p{1.78cm}|}{\centering 31} &  \multicolumn{1}{p{2.25cm}|}{\centering 0010000} \\
 \cline{1-1} \cline{2-2} \cline{3-3}
 \multicolumn{1}{|p{4.56cm}|}{\centering 1123321+1223221} &  \multicolumn{1}{p{1.78cm}|}{\centering 25} &  \multicolumn{1}{p{2.25cm}|}{\centering 0000010} \\
 \cline{1-1} \cline{2-2} \cline{3-3}
 \multicolumn{1}{|p{4.56cm}|}{\centering 1234321} &  \multicolumn{1}{p{1.78cm}|}{\centering 16} &  \multicolumn{1}{p{2.25cm}|}{\centering 1000000} \\
 \cline{1-1} \cline{2-2} \cline{3-3}
 \multicolumn{1}{|p{4.56cm}|}{\centering 0000000} &  \multicolumn{1}{p{1.78cm}|}{\centering 0} &  \multicolumn{1}{p{2.25cm}|}{\centering 0000000} \\
\hline
\end{tabular}\end{center}
The action on the one-dimensional piece $\frak u_2$ has two orbits:~zero and nonzero.

\subsubsection{$E_7$ Node 2}\label{sec:E7node2}

In this case $P=LU$ where $U$ is a 35-dimensional 2-step nilpotent group and $\frak u=\frak u_1\oplus \frak u_2$, with $\dim\frak u_1=35$ and $\dim\frak u_2=7$.  The semisimple part $[L,L]$ of $L$ has type $SL(7)$, and acts on $\frak u_1$ as it does on antisymmetric 3-tensors:

\begin{center}\begin{tabular}{||p{7.3cm}p{1.67cm}p{1.75cm}||}
\hline
 \multicolumn{1}{|p{7.3cm}|}{\centering Orbit Basepoint} &  \multicolumn{1}{p{1.67cm}|}{\centering Dimension} &  \multicolumn{1}{p{1.75cm}|}{\centering Coadjoint orbit intersected} \\
\hline
 \multicolumn{1}{|p{7.3cm}|}{\centering 0112111+0112210+1111111+1112110+1122100} &  \multicolumn{1}{p{1.67cm}|}{\centering 35} &  \multicolumn{1}{p{1.75cm}|}{\centering 0200000} \\
 \cline{1-1} \cline{2-2} \cline{3-3}
 \multicolumn{1}{|p{7.3cm}|}{\centering 0112211+1112111+1112210+1122110} &  \multicolumn{1}{p{1.67cm}|}{\centering 34} &  \multicolumn{1}{p{1.75cm}|}{\centering 0001000} \\
 \cline{1-1} \cline{2-2} \cline{3-3}
 \multicolumn{1}{|p{7.3cm}|}{\centering 0112221+1111111+1123210} &  \multicolumn{1}{p{1.67cm}|}{\centering 31} &  \multicolumn{1}{p{1.75cm}|}{\centering 1000010} \\
 \cline{1-1} \cline{2-2} \cline{3-3}
 \multicolumn{1}{|p{7.3cm}|}{\centering 0112221+1112211+1122111+1123210} &  \multicolumn{1}{p{1.67cm}|}{\centering 28} &  \multicolumn{1}{p{1.75cm}|}{\centering 0100001} \\
 \cline{1-1} \cline{2-2} \cline{3-3}
 \multicolumn{1}{|p{7.3cm}|}{\centering 1111111+1123210} &  \multicolumn{1}{p{1.67cm}|}{\centering 26} &  \multicolumn{1}{p{1.75cm}|}{\centering 2000000} \\
 \cline{1-1} \cline{2-2} \cline{3-3}
 \multicolumn{1}{|p{7.3cm}|}{\centering 1112221+1122211+1123210} &  \multicolumn{1}{p{1.67cm}|}{\centering 25} &  \multicolumn{1}{p{1.75cm}|}{\centering 0010000} \\
 \cline{1-1} \cline{2-2} \cline{3-3}
 \multicolumn{1}{|p{7.3cm}|}{\centering 0112221+1112211+1122111} &  \multicolumn{1}{p{1.67cm}|}{\centering 21} &  \multicolumn{1}{p{1.75cm}|}{\centering 0000002} \\
 \cline{1-1} \cline{2-2} \cline{3-3}
 \multicolumn{1}{|p{7.3cm}|}{\centering 1122221+1123211} &  \multicolumn{1}{p{1.67cm}|}{\centering 20} &  \multicolumn{1}{p{1.75cm}|}{\centering 0000010} \\
 \cline{1-1} \cline{2-2} \cline{3-3}
 \multicolumn{1}{|p{7.3cm}|}{\centering 1123321} &  \multicolumn{1}{p{1.67cm}|}{\centering 13} &  \multicolumn{1}{p{1.75cm}|}{\centering 1000000} \\
 \cline{1-1} \cline{2-2} \cline{3-3}
 \multicolumn{1}{|p{7.3cm}|}{\centering 0000000} &  \multicolumn{1}{p{1.67cm}|}{\centering 0} &  \multicolumn{1}{p{1.75cm}|}{\centering 0000000} \\
\hline
\end{tabular}\end{center}

The action on the 7-dimensional piece $\frak u_2$ is the standard action of $GL(7)$, and has 2 orbits:~zero and nonzero.

\subsubsection{$E_7$ Node 3}\label{sec:E7node3}

In this case $P=LU$ where $U$ is a 47-dimensional 3-step nilpotent group and $\frak u=\frak u_1\oplus \frak u_2\oplus \frak u_3$, with $\dim\frak u_1=30$, $\dim\frak u_2=15$,  and $\dim\frak u_3=2$.  The semisimple part $[L,L]$ of $L$ is of type $SL(2)\times SL(6)$.  Its action on $\frak u_1$ is the tensor product of the standard action of the $SL(2)$ with the 15-dimensional action of the $SL(6)$ on antisymmetric 2-tensors:

\begin{center}\begin{tabular}{p{7.13cm}p{1.75cm}p{1.84cm}}
 \cline{1-1} \cline{2-2} \cline{3-3}
 \multicolumn{1}{|p{7.13cm}|}{\centering Orbit Basepoint} &  \multicolumn{1}{p{1.75cm}|}{\centering Dimension} &  \multicolumn{1}{p{1.84cm}|}{\centering Coadjoint orbit intersected} \\
\cline{1-1} \cline{2-2} \cline{3-3}
 \multicolumn{1}{|p{7.13cm}|}{\centering 0011111+0111110+1011111+1112100} &  \multicolumn{1}{p{1.75cm}|}{\centering 30} &  \multicolumn{1}{p{1.84cm}|}{\centering 0020000} \\
 \cline{1-1} \cline{2-2} \cline{3-3}
 \multicolumn{1}{|p{7.13cm}|}{\centering 0112111+0112210+1011110+1111100} &  \multicolumn{1}{p{1.75cm}|}{\centering 29} &  \multicolumn{1}{p{1.84cm}|}{\centering 1001000} \\
 \cline{1-1} \cline{2-2} \cline{3-3}
 \multicolumn{1}{|p{7.13cm}|}{\centering 0112111+0112210+1011111+1111110+1112100} &  \multicolumn{1}{p{1.75cm}|}{\centering 28} &  \multicolumn{1}{p{1.84cm}|}{\centering 0010010} \\
 \cline{1-1} \cline{2-2} \cline{3-3}
 \multicolumn{1}{|p{7.13cm}|}{\centering 0111111+0112210+1011111+1112110} &  \multicolumn{1}{p{1.75cm}|}{\centering 26} &  \multicolumn{1}{p{1.84cm}|}{\centering 0000020} \\
 \cline{1-1} \cline{2-2} \cline{3-3}
 \multicolumn{1}{|p{7.13cm}|}{\centering 0112221+1011100+1111000} &  \multicolumn{1}{p{1.75cm}|}{\centering 25} &  \multicolumn{1}{p{1.84cm}|}{\centering 2000010} \\
 \cline{1-1} \cline{2-2} \cline{3-3}
 \multicolumn{1}{|p{7.13cm}|}{\centering 0112111+0112210+1111111+1112110} &  \multicolumn{1}{p{1.75cm}|}{\centering 25} &  \multicolumn{1}{p{1.84cm}|}{\centering 0001000} \\
 \cline{1-1} \cline{2-2} \cline{3-3}
 \multicolumn{1}{|p{7.13cm}|}{\centering 0112221+1011111+1111110+1112100} &  \multicolumn{1}{p{1.75cm}|}{\centering 24} &  \multicolumn{1}{p{1.84cm}|}{\centering 0001000} \\
 \cline{1-1} \cline{2-2} \cline{3-3}
 \multicolumn{1}{|p{7.13cm}|}{\centering 0112111+1111111+1112210} &  \multicolumn{1}{p{1.75cm}|}{\centering 23} &  \multicolumn{1}{p{1.84cm}|}{\centering 1000010} \\
 \cline{1-1} \cline{2-2} \cline{3-3}
 \multicolumn{1}{|p{7.13cm}|}{\centering 0112111+1112210} &  \multicolumn{1}{p{1.75cm}|}{\centering 20} &  \multicolumn{1}{p{1.84cm}|}{\centering 2000000} \\
 \cline{1-1} \cline{2-2} \cline{3-3}
 \multicolumn{1}{|p{7.13cm}|}{\centering 0112221+1112111+1112210} &  \multicolumn{1}{p{1.75cm}|}{\centering 19} &  \multicolumn{1}{p{1.84cm}|}{\centering 0010000} \\
 \cline{1-1} \cline{2-2} \cline{3-3}
 \multicolumn{1}{|p{7.13cm}|}{\centering 1011111+1111110+1112100} &  \multicolumn{1}{p{1.75cm}|}{\centering 16} &  \multicolumn{1}{p{1.84cm}|}{\centering 0010000} \\
 \cline{1-1} \cline{2-2} \cline{3-3}
 \multicolumn{1}{|p{7.13cm}|}{\centering 1112111+1112210} &  \multicolumn{1}{p{1.75cm}|}{\centering 15} &  \multicolumn{1}{p{1.84cm}|}{\centering 0000010} \\
 \cline{1-1} \cline{2-2} \cline{3-3}
 \multicolumn{1}{|p{7.13cm}|}{\centering 0112221+1112211} &  \multicolumn{1}{p{1.75cm}|}{\centering 15} &  \multicolumn{1}{p{1.84cm}|}{\centering 0000010} \\
 \cline{1-1} \cline{2-2} \cline{3-3}
 \multicolumn{1}{|p{7.13cm}|}{\centering 1112221} &  \multicolumn{1}{p{1.75cm}|}{\centering 10} &  \multicolumn{1}{p{1.84cm}|}{\centering 1000000} \\
 \cline{1-1} \cline{2-2} \cline{3-3}
 \multicolumn{1}{|p{7.13cm}|}{\centering 0000000} &  \multicolumn{1}{p{1.75cm}|}{\centering 0} &  \multicolumn{1}{p{1.84cm}|}{\centering 0000000} \\
 \cline{1-1} \cline{2-2} \cline{3-3}
\end{tabular}\end{center}

The action on the 15 dimensional $\frak u_2$ is the exterior square action of $GL(6)$, which arises for $SO(6,6)$, node 6.  This latter action has 4 orbits, of dimensions 15, 14, 9, and 0, and basepoints for the nontrivial orbits there are 001211 + 011111 + 111101, 012211 + 111211,  and 122211,  respectively, and correspond to the following orbits here:
\begin{center}\begin{tabular}{||p{4.59cm}p{1.84cm}||p{2.34cm}||}
\hline
 \multicolumn{1}{|p{4.59cm}|}{\centering Orbit Basepoint} &  \multicolumn{1}{p{1.84cm}|}{\centering Dimension} &  \multicolumn{1}{p{2.34cm}|}{\centering Coadjoint orbit intersected} \\
\hline
 \multicolumn{1}{|p{4.59cm}|}{\centering 1122221+1123211+1223210} &  \multicolumn{1}{p{1.84cm}|}{\centering 15} &  \multicolumn{1}{p{2.34cm}|}{\centering 0010000} \\
 \cline{1-1} \cline{2-2} \cline{3-3}
 \multicolumn{1}{|p{4.59cm}|}{\centering 1123321+1223221} &  \multicolumn{1}{p{1.84cm}|}{\centering 14} &  \multicolumn{1}{p{2.34cm}|}{\centering 0000010} \\
 \cline{1-1} \cline{2-2} \cline{3-3}
 \multicolumn{1}{|p{4.59cm}|}{\centering 1224321} &  \multicolumn{1}{p{1.84cm}|}{\centering 9} &  \multicolumn{1}{p{2.34cm}|}{\centering 1000000} \\
 \cline{1-1} \cline{2-2} \cline{3-3}
 \multicolumn{1}{|p{4.59cm}|}{\centering 0000000} &  \multicolumn{1}{p{1.84cm}|}{\centering 0} &  \multicolumn{1}{p{2.34cm}|}{\centering 0000000} \\
\hline
\end{tabular}\end{center}

The action on $\frak u_3$ is the standard action of $SL(2)$, and has 2 orbits:~zero and non-zero.  A representative for the big orbit is the highest root $2234321$, which lies in the minimal coadjoint nilpotent orbit $1000000$.

\subsubsection{$E_7$ Node 4}\label{sec:E7node4}

In this case $P=LU$ where $U$ is a 53-dimensional 4-step nilpotent group and $\frak u=\frak u_1\oplus \frak u_2\oplus \frak u_3\oplus\frak u_4$, with $\dim\frak u_1=24$, $\dim\frak u_2=18$,  $\dim\frak u_3=8$, and $\dim\frak u_4=3$.  The semisimple part $[L,L]$ of $L$ is of type $SL(3)\times SL(2)\times SL(4)$.  Its action on $\frak u_1$ is the tensor product of the standard representations of its three factors.

\begin{center}\begin{tabular}{||p{6.95cm}p{1.81cm}||p{1.91cm}||}
\hline
 \multicolumn{1}{|p{6.95cm}|}{\centering Orbit Basepoint} &  \multicolumn{1}{p{1.81cm}|}{\centering Dimension} &  \multicolumn{1}{p{1.91cm}|}{\centering Coadjoint orbit intersected} \\
\hline
 \multicolumn{1}{|p{6.95cm}|}{\centering $\nrel{0001111+0011110+0101110+}{+0111100+1011100+1111000}$} &  \multicolumn{1}{p{1.81cm}|}{\centering 24} &  \multicolumn{1}{p{1.91cm}|}{\centering 0002000} \\
 \cline{1-1} \cline{2-2} \cline{3-3}
 \multicolumn{1}{|p{6.95cm}|}{\centering $\nrel{0011111+0101110+0111100+}{+1011100+1111000}$} &  \multicolumn{1}{p{1.81cm}|}{\centering 23} &  \multicolumn{1}{p{1.91cm}|}{\centering 1001010} \\
 \cline{1-1} \cline{2-2} \cline{3-3}
 \multicolumn{1}{|p{6.95cm}|}{\centering 0001111+0111110+1011100+1111000} &  \multicolumn{1}{p{1.81cm}|}{\centering 22} &  \multicolumn{1}{p{1.91cm}|}{\centering 2000020} \\
 \cline{1-1} \cline{2-2} \cline{3-3}
 \multicolumn{1}{|p{6.95cm}|}{\centering 0001111+0101111+0111110+1011100} &  \multicolumn{1}{p{1.81cm}|}{\centering 21} &  \multicolumn{1}{p{1.91cm}|}{\centering 0020000} \\
 \cline{1-1} \cline{2-2} \cline{3-3}
 \multicolumn{1}{|p{6.95cm}|}{\centering $\nrel{0011111+0101111+0111100+}{+1011110+1111000}$} &  \multicolumn{1}{p{1.81cm}|}{\centering 21} &  \multicolumn{1}{p{1.91cm}|}{\centering 0001010} \\
 \cline{1-1} \cline{2-2} \cline{3-3}
 \multicolumn{1}{|p{6.95cm}|}{\centering 0011111+0101110+1011110+1111100} &  \multicolumn{1}{p{1.81cm}|}{\centering 20} &  \multicolumn{1}{p{1.91cm}|}{\centering 1001000} \\
 \cline{1-1} \cline{2-2} \cline{3-3}
 \multicolumn{1}{|p{6.95cm}|}{\centering 0101110+0111100+1011111+1111000} &  \multicolumn{1}{p{1.81cm}|}{\centering 19} &  \multicolumn{1}{p{1.91cm}|}{\centering 1001000} \\
 \cline{1-1} \cline{2-2} \cline{3-3}
 \multicolumn{1}{|p{6.95cm}|}{\centering $\nrel{0011111+0101111+0111110+}{+1011110+1111100}$} &  \multicolumn{1}{p{1.81cm}|}{\centering 19} &  \multicolumn{1}{p{1.91cm}|}{\centering 0010010} \\
 \cline{1-1} \cline{2-2} \cline{3-3}
 \multicolumn{1}{|p{6.95cm}|}{\centering 0011111+0111110+1011100+1111000} &  \multicolumn{1}{p{1.81cm}|}{\centering 18} &  \multicolumn{1}{p{1.91cm}|}{\centering 0000020} \\
 \cline{1-1} \cline{2-2} \cline{3-3}
 \multicolumn{1}{|p{6.95cm}|}{\centering 0101110+0111100+1011111} &  \multicolumn{1}{p{1.81cm}|}{\centering 18} &  \multicolumn{1}{p{1.91cm}|}{\centering 2000010} \\
 \cline{1-1} \cline{2-2} \cline{3-3}
 \multicolumn{1}{|p{6.95cm}|}{\centering 0011111+0101111+1011110+1111100} &  \multicolumn{1}{p{1.81cm}|}{\centering 17} &  \multicolumn{1}{p{1.91cm}|}{\centering 0000020} \\
 \cline{1-1} \cline{2-2} \cline{3-3}
 \multicolumn{1}{|p{6.95cm}|}{\centering 0011111+0111110+1011110+1111100} &  \multicolumn{1}{p{1.81cm}|}{\centering 17} &  \multicolumn{1}{p{1.91cm}|}{\centering 0001000} \\
 \cline{1-1} \cline{2-2} \cline{3-3}
 \multicolumn{1}{|p{6.95cm}|}{\centering 0101111+0111110+1011111+1111100} &  \multicolumn{1}{p{1.81cm}|}{\centering 17} &  \multicolumn{1}{p{1.91cm}|}{\centering 0001000} \\
 \cline{1-1} \cline{2-2} \cline{3-3}
 \multicolumn{1}{|p{6.95cm}|}{\centering 0011111+0101111+0111110+1011110} &  \multicolumn{1}{p{1.81cm}|}{\centering 16} &  \multicolumn{1}{p{1.91cm}|}{\centering 0001000} \\
 \cline{1-1} \cline{2-2} \cline{3-3}
 \multicolumn{1}{|p{6.95cm}|}{\centering 0111110+1011111+1111100} &  \multicolumn{1}{p{1.81cm}|}{\centering 16} &  \multicolumn{1}{p{1.91cm}|}{\centering 1000010} \\
 \cline{1-1} \cline{2-2} \cline{3-3}
 \multicolumn{1}{|p{6.95cm}|}{\centering 0011111+0101111+1111110} &  \multicolumn{1}{p{1.81cm}|}{\centering 15} &  \multicolumn{1}{p{1.91cm}|}{\centering 1000010} \\
 \cline{1-1} \cline{2-2} \cline{3-3}
 \multicolumn{1}{|p{6.95cm}|}{\centering 0011111+1111110} &  \multicolumn{1}{p{1.81cm}|}{\centering 14} &  \multicolumn{1}{p{1.91cm}|}{\centering 2000000} \\
 \cline{1-1} \cline{2-2} \cline{3-3}
 \multicolumn{1}{|p{6.95cm}|}{\centering 0101111+0111110+1111100} &  \multicolumn{1}{p{1.81cm}|}{\centering 13} &  \multicolumn{1}{p{1.91cm}|}{\centering 0010000} \\
 \cline{1-1} \cline{2-2} \cline{3-3}
 \multicolumn{1}{|p{6.95cm}|}{\centering 0111111+1011111+1111110} &  \multicolumn{1}{p{1.81cm}|}{\centering 13} &  \multicolumn{1}{p{1.91cm}|}{\centering 0010000} \\
 \cline{1-1} \cline{2-2} \cline{3-3}
 \multicolumn{1}{|p{6.95cm}|}{\centering 0111111+1111110} &  \multicolumn{1}{p{1.81cm}|}{\centering 11} &  \multicolumn{1}{p{1.91cm}|}{\centering 0000010} \\
 \cline{1-1} \cline{2-2} \cline{3-3}
 \multicolumn{1}{|p{6.95cm}|}{\centering 1011111+1111110} &  \multicolumn{1}{p{1.81cm}|}{\centering 10} &  \multicolumn{1}{p{1.91cm}|}{\centering 0000010} \\
 \cline{1-1} \cline{2-2} \cline{3-3}
 \multicolumn{1}{|p{6.95cm}|}{\centering 0111111+1011111} &  \multicolumn{1}{p{1.81cm}|}{\centering 9} &  \multicolumn{1}{p{1.91cm}|}{\centering 0000010} \\
 \cline{1-1} \cline{2-2} \cline{3-3}
 \multicolumn{1}{|p{6.95cm}|}{\centering 1111111} &  \multicolumn{1}{p{1.81cm}|}{\centering 7} &  \multicolumn{1}{p{1.91cm}|}{\centering 1000000} \\
 \cline{1-1} \cline{2-2} \cline{3-3}
 \multicolumn{1}{|p{6.95cm}|}{\centering 0000000} &  \multicolumn{1}{p{1.81cm}|}{\centering 0} &  \multicolumn{1}{p{1.91cm}|}{\centering 0000000} \\
 \cline{1-1} \cline{2-2} \cline{3-3}
\end{tabular}\end{center}

The action on the 18-dimensional $\frak u_2$ is the tensor product action of the standard action of $SL(3)$ factor with the exterior square representation of $SL(4)$ factor.  It arises for $SO(6,6)$, node 3, and has 11 orbits.
The nontrivial ones have dimensions 18, 17, 15, 14, 13, 12, 12, 11, 8, 7, with respective basepoints 001111 + 011101 + 011110 + 111100, 001211 + 011101 + 111110, 001211 + 011111 + 111101 + 111110, 011101 + 111110, 011211 + 111101 + 111110, 001211 + 011111 + 111101, 001211 + 011111 + 111110, 011211 + 111111, 111101 + 111110, 111211 there.  The basepoints and orbits here on $\frak u_2$  are thus given by the following table:
\begin{center}\begin{tabular}{||p{6.09cm}p{1.88cm}||p{2.22cm}||}
\hline
 \multicolumn{1}{|p{6.09cm}|}{\centering Orbit Basepoint} &  \multicolumn{1}{p{1.88cm}|}{\centering Dimension} &  \multicolumn{1}{p{2.22cm}|}{\centering Coadjoint orbit intersected} \\
\hline
 \multicolumn{1}{|p{6.09cm}|}{\centering 0112211+1112111+1112210+1122110} &  \multicolumn{1}{p{1.88cm}|}{\centering 18} &  \multicolumn{1}{p{2.22cm}|}{\centering 0001000} \\
 \cline{1-1} \cline{2-2} \cline{3-3}
 \multicolumn{1}{|p{6.09cm}|}{\centering 1112221+1112111+1122210} &  \multicolumn{1}{p{1.88cm}|}{\centering 17} &  \multicolumn{1}{p{2.22cm}|}{\centering 2000000} \\
 \cline{1-1} \cline{2-2} \cline{3-3}
 \multicolumn{1}{|p{6.09cm}|}{\centering 0112221+1112211+1122111+1122210} &  \multicolumn{1}{p{1.88cm}|}{\centering 15} &  \multicolumn{1}{p{2.22cm}|}{\centering 0100001} \\
 \cline{1-1} \cline{2-2} \cline{3-3}
 \multicolumn{1}{|p{6.09cm}|}{\centering 1112111+1122210} &  \multicolumn{1}{p{1.88cm}|}{\centering 14} &  \multicolumn{1}{p{2.22cm}|}{\centering 2000000} \\
 \cline{1-1} \cline{2-2} \cline{3-3}
 \multicolumn{1}{|p{6.09cm}|}{\centering 1112221+1122111+1122210} &  \multicolumn{1}{p{1.88cm}|}{\centering 13} &  \multicolumn{1}{p{2.22cm}|}{\centering 0010000} \\
 \cline{1-1} \cline{2-2} \cline{3-3}
 \multicolumn{1}{|p{6.09cm}|}{\centering 0112221+1112211+1122111} &  \multicolumn{1}{p{1.88cm}|}{\centering 12} &  \multicolumn{1}{p{2.22cm}|}{\centering 0000002} \\
 \cline{1-1} \cline{2-2} \cline{3-3}
 \multicolumn{1}{|p{6.09cm}|}{\centering 0112221+1112211+1122210} &  \multicolumn{1}{p{1.88cm}|}{\centering 12} &  \multicolumn{1}{p{2.22cm}|}{\centering 0010000} \\
 \cline{1-1} \cline{2-2} \cline{3-3}
 \multicolumn{1}{|p{6.09cm}|}{\centering 1112221+1122211} &  \multicolumn{1}{p{1.88cm}|}{\centering 11} &  \multicolumn{1}{p{2.22cm}|}{\centering 0000010} \\
 \cline{1-1} \cline{2-2} \cline{3-3}
 \multicolumn{1}{|p{6.09cm}|}{\centering 1122111+1122210} &  \multicolumn{1}{p{1.88cm}|}{\centering 8} &  \multicolumn{1}{p{2.22cm}|}{\centering 0000010} \\
 \cline{1-1} \cline{2-2} \cline{3-3}
 \multicolumn{1}{|p{6.09cm}|}{\centering 1122221} &  \multicolumn{1}{p{1.88cm}|}{\centering 7} &  \multicolumn{1}{p{2.22cm}|}{\centering 1000000} \\
 \cline{1-1} \cline{2-2} \cline{3-3}
 \multicolumn{1}{|p{6.09cm}|}{\centering 0000000} &  \multicolumn{1}{p{1.88cm}|}{\centering 0} &  \multicolumn{1}{p{2.22cm}|}{\centering 0000000} \\
\hline
\end{tabular}\end{center}

The action on the 8 dimensional $\frak u_3$ is the tensor product of the standard representations of the $SL(2)$ and $SL(4)$ factors, and arises for $SL(6)$, node 2.  It thus has 3 orbits, classified by rank.  These have dimensions 8, 5, and 0, with basepoints 01111+11110, 11111, and 00000 there, respectively.  The orbits here on $\frak u_3$ are given as follows:

\begin{center}\begin{tabular}{|cp{1.91cm}||p{2.25cm}||}
\hline
 \multicolumn{1}{|c|}{Orbit Basepoint} &  \multicolumn{1}{p{1.91cm}|}{\centering Dimension} &  \multicolumn{1}{p{2.25cm}|}{\centering Coadjoint orbit intersected} \\
\hline
 \multicolumn{1}{|c|}{1123321+1223221} &  \multicolumn{1}{p{1.91cm}|}{\centering 8} &  \multicolumn{1}{p{2.25cm}|}{\centering 0000010} \\
 \cline{1-1} \cline{2-2} \cline{3-3}
 \multicolumn{1}{|c|}{1223321} &  \multicolumn{1}{p{1.91cm}|}{\centering 5} &  \multicolumn{1}{p{2.25cm}|}{\centering 1000000} \\
 \cline{1-1} \cline{2-2} \cline{3-3}
 \multicolumn{1}{|c|}{0000000} &  \multicolumn{1}{p{1.91cm}|}{\centering 0} &  \multicolumn{1}{p{2.25cm}|}{\centering 0000000} \\
\hline
\end{tabular}\end{center}

The action on  $\frak u_4$ is the standard representation of $GL(3)$ and has 2 orbits:~zero and nonzero.  A representative for the big orbit is the highest root $2234321$, which lies in the minimal coadjoint nilpotent orbit $1000000$.

\subsubsection{$E_7$ Node 5}\label{sec:E7node5}

In this case $P=LU$ where $U$ is a 50-dimensional 3-step nilpotent group and $\frak u=\frak u_1\oplus \frak u_2\oplus \frak u_3$, with $\dim\frak u_1=30$, $\dim\frak u_2=15$,  and $\dim\frak u_3=5$.  The semisimple part $[L,L]$ of $L$ is of type $SL(5)\times SL(3)$.  Its action on $\frak u_1$ is the tensor product of the exterior square representation of the $SL(5)$ factor with the standard representation of the $SL(3)$ factor, and has the following orbits:

\begin{center}\begin{tabular}{||p{7.37cm}p{1.72cm}p{1.63cm}}
 \cline{1-1} \cline{2-2} \cline{3-3}
 \multicolumn{1}{|p{7.37cm}|}{\centering Orbit Basepoint} &  \multicolumn{1}{p{1.72cm}|}{\centering Dimension} &  \multicolumn{1}{p{1.63cm}|}{\centering Coadjoint orbit intersected} \\
\cline{1-1} \cline{2-2} \cline{3-3}
 \multicolumn{1}{|p{7.37cm}|}{\centering $\nrel{0011111+0101111+0111110+}{+0112100+1011110+1111100}$} &  \multicolumn{1}{p{1.72cm}|}{\centering 30} &  \multicolumn{1}{p{1.63cm}|}{\centering 0000200} \\
 \cline{1-1} \cline{2-2} \cline{3-3}
 \multicolumn{1}{|p{7.37cm}|}{\centering 0011111+0101111+0111110+1011110+1112100} &  \multicolumn{1}{p{1.72cm}|}{\centering 29} &  \multicolumn{1}{p{1.63cm}|}{\centering 0001010} \\
 \cline{1-1} \cline{2-2} \cline{3-3}
 \multicolumn{1}{|p{7.37cm}|}{\centering 0011111+0101111+0111110+1011111+1112100} &  \multicolumn{1}{p{1.72cm}|}{\centering 28} &  \multicolumn{1}{p{1.63cm}|}{\centering 0110001} \\
 \cline{1-1} \cline{2-2} \cline{3-3}
 \multicolumn{1}{|p{7.37cm}|}{\centering 0011111+0111110+1011111+1112100} &  \multicolumn{1}{p{1.72cm}|}{\centering 27} &  \multicolumn{1}{p{1.63cm}|}{\centering 0020000} \\
 \cline{1-1} \cline{2-2} \cline{3-3}
 \multicolumn{1}{|p{7.37cm}|}{\centering 0011111+0101111+0112110+1011110+1122100} &  \multicolumn{1}{p{1.72cm}|}{\centering 27} &  \multicolumn{1}{p{1.63cm}|}{\centering 1000101} \\
 \cline{1-1} \cline{2-2} \cline{3-3}
 \multicolumn{1}{|p{7.37cm}|}{\centering 0111111+0112110+1011110+1112100} &  \multicolumn{1}{p{1.72cm}|}{\centering 26} &  \multicolumn{1}{p{1.63cm}|}{\centering 1001000} \\
 \cline{1-1} \cline{2-2} \cline{3-3}
 \multicolumn{1}{|p{7.37cm}|}{\centering 0101111+0112110+1011111+1111110+1122100} &  \multicolumn{1}{p{1.72cm}|}{\centering 25} &  \multicolumn{1}{p{1.63cm}|}{\centering 0010010} \\
 \cline{1-1} \cline{2-2} \cline{3-3}
 \multicolumn{1}{|p{7.37cm}|}{\centering 0011111+0101111+1011110+1122100} &  \multicolumn{1}{p{1.72cm}|}{\centering 24} &  \multicolumn{1}{p{1.63cm}|}{\centering 2000002} \\
 \cline{1-1} \cline{2-2} \cline{3-3}
 \multicolumn{1}{|p{7.37cm}|}{\centering 0112111+1011110+1111100} &  \multicolumn{1}{p{1.72cm}|}{\centering 23} &  \multicolumn{1}{p{1.63cm}|}{\centering 2000010} \\
 \cline{1-1} \cline{2-2} \cline{3-3}
 \multicolumn{1}{|p{7.37cm}|}{\centering 0011111+0101111+1112110+1122100} &  \multicolumn{1}{p{1.72cm}|}{\centering 23} &  \multicolumn{1}{p{1.63cm}|}{\centering 0000020} \\
 \cline{1-1} \cline{2-2} \cline{3-3}
 \multicolumn{1}{|p{7.37cm}|}{\centering 0111111+0112110+1011111+1111110+1122100} &  \multicolumn{1}{p{1.72cm}|}{\centering 23} &  \multicolumn{1}{p{1.63cm}|}{\centering 0200000} \\
 \cline{1-1} \cline{2-2} \cline{3-3}
 \multicolumn{1}{|p{7.37cm}|}{\centering 0101111+0112110+1111110+1122100} &  \multicolumn{1}{p{1.72cm}|}{\centering 22} &  \multicolumn{1}{p{1.63cm}|}{\centering 0001000} \\
 \cline{1-1} \cline{2-2} \cline{3-3}
 \multicolumn{1}{|p{7.37cm}|}{\centering 0111111+0112110+1011111+1111110} &  \multicolumn{1}{p{1.72cm}|}{\centering 22} &  \multicolumn{1}{p{1.63cm}|}{\centering 0001000} \\
 \cline{1-1} \cline{2-2} \cline{3-3}
 \multicolumn{1}{|p{7.37cm}|}{\centering 0111111+1011111+1112110+1122100} &  \multicolumn{1}{p{1.72cm}|}{\centering 22} &  \multicolumn{1}{p{1.63cm}|}{\centering 0001000} \\
 \cline{1-1} \cline{2-2} \cline{3-3}
 \multicolumn{1}{|p{7.37cm}|}{\centering 0111111+1112110+1122100} &  \multicolumn{1}{p{1.72cm}|}{\centering 21} &  \multicolumn{1}{p{1.63cm}|}{\centering 1000010} \\
 \cline{1-1} \cline{2-2} \cline{3-3}
 \multicolumn{1}{|p{7.37cm}|}{\centering 0101111+1011111+1122110} &  \multicolumn{1}{p{1.72cm}|}{\centering 20} &  \multicolumn{1}{p{1.63cm}|}{\centering 1000010} \\
 \cline{1-1} \cline{2-2} \cline{3-3}
 \multicolumn{1}{|p{7.37cm}|}{\centering 0112111+1111111+1112110+1122100} &  \multicolumn{1}{p{1.72cm}|}{\centering 19} &  \multicolumn{1}{p{1.63cm}|}{\centering 0100001} \\
 \cline{1-1} \cline{2-2} \cline{3-3}
 \multicolumn{1}{|p{7.37cm}|}{\centering 0101111+1122110} &  \multicolumn{1}{p{1.72cm}|}{\centering 18} &  \multicolumn{1}{p{1.63cm}|}{\centering 2000000} \\
 \cline{1-1} \cline{2-2} \cline{3-3}
 \multicolumn{1}{|p{7.37cm}|}{\centering 0112111+1111111+1122110} &  \multicolumn{1}{p{1.72cm}|}{\centering 17} &  \multicolumn{1}{p{1.63cm}|}{\centering 0010000} \\
 \cline{1-1} \cline{2-2} \cline{3-3}
 \multicolumn{1}{|p{7.37cm}|}{\centering 1111111+1112110+1122100} &  \multicolumn{1}{p{1.72cm}|}{\centering 16} &  \multicolumn{1}{p{1.63cm}|}{\centering 0010000} \\
 \cline{1-1} \cline{2-2} \cline{3-3}
 \multicolumn{1}{|p{7.37cm}|}{\centering 0112111+1112110+1122100} &  \multicolumn{1}{p{1.72cm}|}{\centering 15} &  \multicolumn{1}{p{1.63cm}|}{\centering 0000002} \\
 \cline{1-1} \cline{2-2} \cline{3-3}
 \multicolumn{1}{|p{7.37cm}|}{\centering 1112111+1122110} &  \multicolumn{1}{p{1.72cm}|}{\centering 14} &  \multicolumn{1}{p{1.63cm}|}{\centering 0000010} \\
 \cline{1-1} \cline{2-2} \cline{3-3}
 \multicolumn{1}{|p{7.37cm}|}{\centering 0112111+1111111} &  \multicolumn{1}{p{1.72cm}|}{\centering 12} &  \multicolumn{1}{p{1.63cm}|}{\centering 0000010} \\
 \cline{1-1} \cline{2-2} \cline{3-3}
 \multicolumn{1}{|p{7.37cm}|}{\centering 1122111} &  \multicolumn{1}{p{1.72cm}|}{\centering 9} &  \multicolumn{1}{p{1.63cm}|}{\centering 1000000} \\
 \cline{1-1} \cline{2-2} \cline{3-3}
 \multicolumn{1}{|p{7.37cm}|}{\centering 0000000} &  \multicolumn{1}{p{1.72cm}|}{\centering 0} &  \multicolumn{1}{p{1.63cm}|}{\centering 0000000} \\
\hline
\end{tabular}\end{center}

The action on the 5-dimensional piece $\frak u_2$ is the tensor product of the standard representations of the two factors, and occurs for $SL(8)$, node 3.  It has 4 orbits, classified by rank, having dimensions 15, 12, 7, and 0 with respective basepoints 0011111 + 0111110 + 1111100, 0111111 + 1111110, 1111111, and 0000000 there.  Thus the orbits here on $\frak u_2$ are given by
\begin{center}\begin{tabular}{||p{4.41cm}p{1.81cm}||p{1.91cm}||}
\hline
 \multicolumn{1}{|p{4.41cm}|}{\centering Orbit Basepoint} &  \multicolumn{1}{p{1.81cm}|}{\centering Dimension} &  \multicolumn{1}{p{1.91cm}|}{\centering Coadjoint orbit intersected} \\
\hline
 \multicolumn{1}{|p{4.41cm}|}{\centering 1223210+1123211+1122221} &  \multicolumn{1}{p{1.81cm}|}{\centering 15} &  \multicolumn{1}{p{1.91cm}|}{\centering 0010000} \\
 \cline{1-1} \cline{2-2} \cline{3-3}
 \multicolumn{1}{|p{4.41cm}|}{\centering 1223211+1123221} &  \multicolumn{1}{p{1.81cm}|}{\centering 12} &  \multicolumn{1}{p{1.91cm}|}{\centering 0000010} \\
 \cline{1-1} \cline{2-2} \cline{3-3}
 \multicolumn{1}{|p{4.41cm}|}{\centering 1223221} &  \multicolumn{1}{p{1.81cm}|}{\centering 7} &  \multicolumn{1}{p{1.91cm}|}{\centering 1000000} \\
 \cline{1-1} \cline{2-2} \cline{3-3}
 \multicolumn{1}{|p{4.41cm}|}{\centering 0000000} &  \multicolumn{1}{p{1.81cm}|}{\centering 0} &  \multicolumn{1}{p{1.91cm}|}{\centering 0000000} \\
\hline
\end{tabular}\end{center}

The action on the 15-dimensional piece $\frak u_3$ is the standard action of $SL(5)$, and has 2 orbits:~zero and nonzero.  A representative for the big orbit is the highest root $2234321$, which lies in the minimal coadjoint nilpotent orbit $1000000$.

\subsubsection{$E_7$ Node 6}\label{sec:E7node6}

In this case $P=LU$ where $U$ is a 42-dimensional 2-step nilpotent group and $\frak u=\frak u_1\oplus \frak u_2$, with $\dim\frak u_1=32$   and $\dim\frak u_2=10$.  The semisimple part $[L,L]$ of $L$ is of type $SO(5,5)\times SL(2)$.  Its action on $\frak u_1$ is the tensor product of the spin representation of the $SO(5,5)$ factor with the standard representation of the $SL(2)$ factor, and has the following orbits:

\begin{center}\begin{tabular}{|cp{1.84cm}p{1.97cm}}
 \cline{1-1} \cline{2-2} \cline{3-3}
 \multicolumn{1}{|c|}{Orbit Basepoint} &  \multicolumn{1}{p{1.84cm}|}{\centering Dimension} &  \multicolumn{1}{p{1.97cm}|}{\centering Coadjoint orbit intersected} \\
\cline{1-1} \cline{2-2} \cline{3-3}
 \multicolumn{1}{|c|}{0011111+0101111+1112210+1122110} &  \multicolumn{1}{p{1.84cm}|}{\centering 32} &  \multicolumn{1}{p{1.97cm}|}{\centering 0000020} \\
 \cline{1-1} \cline{2-2} \cline{3-3}
 \multicolumn{1}{|c|}{0112211+1112111+1112210+1122110} &  \multicolumn{1}{p{1.84cm}|}{\centering 31} &  \multicolumn{1}{p{1.97cm}|}{\centering 0001000} \\
 \cline{1-1} \cline{2-2} \cline{3-3}
 \multicolumn{1}{|c|}{0112211+1011111+1223210} &  \multicolumn{1}{p{1.84cm}|}{\centering 28} &  \multicolumn{1}{p{1.97cm}|}{\centering 1000010} \\
 \cline{1-1} \cline{2-2} \cline{3-3}
 \multicolumn{1}{|c|}{1011111+1223210} &  \multicolumn{1}{p{1.84cm}|}{\centering 24} &  \multicolumn{1}{p{1.97cm}|}{\centering 2000000} \\
 \cline{1-1} \cline{2-2} \cline{3-3}
 \multicolumn{1}{|c|}{1112211+1122111+1223210} &  \multicolumn{1}{p{1.84cm}|}{\centering 23} &  \multicolumn{1}{p{1.97cm}|}{\centering 0010000} \\
 \cline{1-1} \cline{2-2} \cline{3-3}
 \multicolumn{1}{|c|}{1123211+1223210} &  \multicolumn{1}{p{1.84cm}|}{\centering 19} &  \multicolumn{1}{p{1.97cm}|}{\centering 0000010} \\
 \cline{1-1} \cline{2-2} \cline{3-3}
 \multicolumn{1}{|c|}{1112211+1122111} &  \multicolumn{1}{p{1.84cm}|}{\centering 17} &  \multicolumn{1}{p{1.97cm}|}{\centering 0000010} \\
 \cline{1-1} \cline{2-2} \cline{3-3}
 \multicolumn{1}{|c|}{1223211} &  \multicolumn{1}{p{1.84cm}|}{\centering 12} &  \multicolumn{1}{p{1.97cm}|}{\centering 1000000} \\
 \cline{1-1} \cline{2-2} \cline{3-3}
 \multicolumn{1}{|c|}{0000000} &  \multicolumn{1}{p{1.84cm}|}{\centering 0} &  \multicolumn{1}{p{1.97cm}|}{\centering 0000000} \\
\hline
\end{tabular}\end{center}

The action on $\frak u_2$ is the 10-dimensional vector realization of $SO(5,5)$, and occurs  for $SO(6,6)$, node 1.  It has 2 nontrivial  orbits, of dimensions 10 and 9 with basepoints 111101 + 111110 and 122211, respectively in $\frak{so}(6,6)$.  The orbits are given as follows:
\begin{center}\begin{tabular}{|cp{1.84cm}p{1.97cm}}
 \cline{1-1} \cline{2-2} \cline{3-3}
 \multicolumn{1}{|c|}{0112221+ 2234321} &  \multicolumn{1}{p{1.84cm}|}{\centering 10} &  \multicolumn{1}{p{1.97cm}|}{\centering 0000010} \\
 \cline{1-1} \cline{2-2} \cline{3-3}
 \multicolumn{1}{|c|}{2234321} &  \multicolumn{1}{p{1.84cm}|}{\centering 9} &  \multicolumn{1}{p{1.97cm}|}{\centering 1000000} \\
 \cline{1-1} \cline{2-2} \cline{3-3}
 \multicolumn{1}{|c|}{0000000} &  \multicolumn{1}{p{1.84cm}|}{\centering 0} &  \multicolumn{1}{p{1.97cm}|}{\centering 0000000} \\
\hline
\end{tabular}\end{center}

\subsubsection{$E_7$ Node 7}\label{sec:E7node7}

This is the only situation where $P$ has an abelian unipotent radical $U$, which in this case is 27 dimensional.  The semisimple part $[L,L]$ of $L$ is of type $E_6$, and acts on $\frak u=\frak u_1$ by the minimal, 27-dimensional representation.  It has 3 orbits:
\begin{center}\begin{tabular}{|cp{1.81cm}p{2.44cm}}
 \cline{1-1} \cline{2-2} \cline{3-3}
 \multicolumn{1}{|c|}{Orbit Basepoint} &  \multicolumn{1}{p{1.81cm}|}{\centering Dimension} &  \multicolumn{1}{p{2.44cm}|}{\centering Coadjoint orbit intersected} \\
\cline{1-1} \cline{2-2} \cline{3-3}
 \multicolumn{1}{|c|}{0112221+1112211+1122111} &  \multicolumn{1}{p{1.81cm}|}{\centering 27} &  \multicolumn{1}{p{2.44cm}|}{\centering 0000002} \\
 \cline{1-1} \cline{2-2} \cline{3-3}
 \multicolumn{1}{|c|}{1123321+1223221} &  \multicolumn{1}{p{1.81cm}|}{\centering 26} &  \multicolumn{1}{p{2.44cm}|}{\centering 0000010} \\
 \cline{1-1} \cline{2-2} \cline{3-3}
 \multicolumn{1}{|c|}{2234321} &  \multicolumn{1}{p{1.81cm}|}{\centering 17} &  \multicolumn{1}{p{2.44cm}|}{\centering 1000000} \\
 \cline{1-1} \cline{2-2} \cline{3-3}
 \multicolumn{1}{|c|}{0000000} &  \multicolumn{1}{p{1.81cm}|}{\centering 0} &  \multicolumn{1}{p{2.44cm}|}{\centering 0000000} \\
\hline
\end{tabular}\end{center}

\subsection{Type $E_8$}
The following table lists the internal Chevalley modules, and also for which smaller groups and parabolics the higher graded ones also occur (aside from the standard actions of $SL(n)$).  The minimal representations of $E_6$ and $E_7$ are written as {\bf 27} and {\bf 56}, respectively.  The same labeling conventions used for $D_5$, $E_6$, and $E_7$ remain in effect here.
\begin{sidewaystable}
\begin{center}\begin{tabular}{||p{1.09cm}p{2.66cm}||p{3.32cm}||p{1.72cm}||p{1.75cm}||p{2.06cm}||p{1.56cm}||p{1.03cm}||}
\hline
 \multicolumn{1}{|p{1.09cm}|}{\centering Node} &  \multicolumn{1}{p{2.66cm}|}{\centering Type of  $[L,L]$} &  \multicolumn{1}{p{3.32cm}|}{\centering $i\,=\,1$} &  \multicolumn{1}{p{1.72cm}|}{\centering $i\,=\,2$} &  \multicolumn{1}{p{1.75cm}|}{\centering $i\,=\,3$} &  \multicolumn{1}{p{2.06cm}|}{\centering $i\,=\,4$} &  \multicolumn{1}{p{1.56cm}|}{\centering $i\,=\,5$} &  \multicolumn{1}{p{1.03cm}|}{\centering $i\,=\,6$} \\
\hline
 \multicolumn{1}{|p{1.09cm}|}{\centering 1} &  \multicolumn{1}{p{2.66cm}|}{\raggedright $SO(7,7)$} &  \multicolumn{1}{p{3.32cm}|}{\centering Spin} &  \multicolumn{1}{p{1.72cm}|}{\centering $D_8$ node 1} &  \multicolumn{1}{p{1.75cm}|}{\centering} &  \multicolumn{1}{p{2.06cm}|}{\centering} &  \multicolumn{1}{p{1.56cm}|}{\centering} &  \multicolumn{1}{p{1.03cm}|}{\centering} \\
 \multicolumn{1}{|p{1.09cm}|}{\centering} &  \multicolumn{1}{p{2.66cm}|}{\centering $\dim \frak u_i$} &  \multicolumn{1}{p{3.32cm}|}{\centering 64} &  \multicolumn{1}{p{1.72cm}|}{\centering 14} &  \multicolumn{1}{p{1.75cm}|}{\centering} &  \multicolumn{1}{p{2.06cm}|}{\centering} &  \multicolumn{1}{p{1.56cm}|}{\centering} &  \multicolumn{1}{p{1.03cm}|}{\centering} \\
 \multicolumn{1}{|p{1.09cm}|}{\centering} &  \multicolumn{1}{p{2.66cm}|}{\centering action} &  \multicolumn{1}{p{3.32cm}|}{\centering $\varpi_2$} &  \multicolumn{1}{p{1.72cm}|}{\centering $\varpi_8$} &  \multicolumn{1}{p{1.75cm}|}{\centering} &  \multicolumn{1}{p{2.06cm}|}{\centering} &  \multicolumn{1}{p{1.56cm}|}{\centering} &  \multicolumn{1}{p{1.03cm}|}{\centering} \\
\hline
 \multicolumn{1}{|p{1.09cm}|}{\centering 2} &  \multicolumn{1}{p{2.66cm}|}{\raggedright $SL(8)$} &  \multicolumn{1}{p{3.32cm}|}{\centering Ext. cube} &  \multicolumn{1}{p{1.72cm}|}{\centering $D_8$ node 8} &  \multicolumn{1}{p{1.75cm}|}{\centering} &  \multicolumn{1}{p{2.06cm}|}{\centering} &  \multicolumn{1}{p{1.56cm}|}{\centering} &  \multicolumn{1}{p{1.03cm}|}{\centering} \\
 \multicolumn{1}{|p{1.09cm}|}{\centering} &  \multicolumn{1}{p{2.66cm}|}{\centering $\dim \frak u_i$} &  \multicolumn{1}{p{3.32cm}|}{\centering 56} &  \multicolumn{1}{p{1.72cm}|}{\centering 28} &  \multicolumn{1}{p{1.75cm}|}{\centering 8} &  \multicolumn{1}{p{2.06cm}|}{\centering} &  \multicolumn{1}{p{1.56cm}|}{\centering} &  \multicolumn{1}{p{1.03cm}|}{\centering} \\
 \multicolumn{1}{|p{1.09cm}|}{\centering} &  \multicolumn{1}{p{2.66cm}|}{\centering action} &  \multicolumn{1}{p{3.32cm}|}{\centering $\varpi_6$} &  \multicolumn{1}{p{1.72cm}|}{\centering $\varpi_3$} &  \multicolumn{1}{p{1.75cm}|}{\centering $\varpi_8$} &  \multicolumn{1}{p{2.06cm}|}{\centering} &  \multicolumn{1}{p{1.56cm}|}{\centering} &  \multicolumn{1}{p{1.03cm}|}{\centering} \\
\hline
 \multicolumn{1}{|p{1.09cm}|}{\centering 3} &  \multicolumn{1}{p{2.66cm}|}{\raggedright $SL(2) \times SL(7)$} &  \multicolumn{1}{p{3.32cm}|}{\centering Stan.$\,\otimes$ Ext. sq.} &  \multicolumn{1}{p{1.72cm}|}{\centering $E_7$ node 2} &  \multicolumn{1}{p{1.75cm}|}{\centering $A_8$ node 7} &  \multicolumn{1}{p{2.06cm}|}{\centering} &  \multicolumn{1}{p{1.56cm}|}{\centering} &  \multicolumn{1}{p{1.03cm}|}{\centering} \\
 \multicolumn{1}{|p{1.09cm}|}{\centering} &  \multicolumn{1}{p{2.66cm}|}{\centering $\dim \frak u_i$} &  \multicolumn{1}{p{3.32cm}|}{\centering 42} &  \multicolumn{1}{p{1.72cm}|}{\centering 35} &  \multicolumn{1}{p{1.75cm}|}{\centering 14} &  \multicolumn{1}{p{2.06cm}|}{\centering 7} &  \multicolumn{1}{p{1.56cm}|}{\centering} &  \multicolumn{1}{p{1.03cm}|}{\centering} \\
 \multicolumn{1}{|p{1.09cm}|}{\centering} &  \multicolumn{1}{p{2.66cm}|}{\centering action} &  \multicolumn{1}{p{3.32cm}|}{\centering $\varpi_1+\varpi_7$} &  \multicolumn{1}{p{1.72cm}|}{\centering $\varpi_5$} &  \multicolumn{1}{p{1.75cm}|}{\centering $\varpi_1+\varpi_2$} &  \multicolumn{1}{p{2.06cm}|}{\centering $\varpi_8$} &  \multicolumn{1}{p{1.56cm}|}{\centering} &  \multicolumn{1}{p{1.03cm}|}{\centering} \\
\hline
 \multicolumn{1}{|p{1.09cm}|}{\centering 4} &  \multicolumn{1}{p{2.66cm}|}{\raggedright $A_2\times A_1 \times A_4$} &  \multicolumn{1}{p{3.32cm}|}{\centering Stan.$\,\otimes\,$Stan.$\,\otimes\,$Stan.} &  \multicolumn{1}{p{1.72cm}|}{\centering $E_7$ node 5} &  \multicolumn{1}{p{1.75cm}|}{\centering $E_6$ node 3} &  \multicolumn{1}{p{2.06cm}|}{\centering $A_7$ node 3} &  \multicolumn{1}{p{1.56cm}|}{\centering $A_4$ node 2} &  \multicolumn{1}{p{1.03cm}|}{\centering} \\
 \multicolumn{1}{|p{1.09cm}|}{\centering} &  \multicolumn{1}{p{2.66cm}|}{\centering $\dim \frak u_i$} &  \multicolumn{1}{p{3.32cm}|}{\centering 30} &  \multicolumn{1}{p{1.72cm}|}{\centering 30} &  \multicolumn{1}{p{1.75cm}|}{\centering 20} &  \multicolumn{1}{p{2.06cm}|}{\centering 15} &  \multicolumn{1}{p{1.56cm}|}{\centering 6} &  \multicolumn{1}{p{1.03cm}|}{\centering 5} \\
 \multicolumn{1}{|p{1.09cm}|}{\centering} &  \multicolumn{1}{p{2.66cm}|}{\centering action} &  \multicolumn{1}{p{3.32cm}|}{\centering $\varpi_1+\varpi_2+\varpi_8$} &  \multicolumn{1}{p{1.72cm}|}{\centering $\varpi_3+\varpi_7$} &  \multicolumn{1}{p{1.75cm}|}{\centering $\varpi_2+\varpi_6$} &  \multicolumn{1}{p{2.06cm}|}{\centering $\varpi_1+\varpi_5$} &  \multicolumn{1}{p{1.56cm}|}{\centering $\varpi_2+\varpi_3$} &  \multicolumn{1}{p{1.03cm}|}{\centering $\varpi_8$} \\
\hline
 \multicolumn{1}{|p{1.09cm}|}{\centering 5} &  \multicolumn{1}{p{2.66cm}|}{\raggedright $SL(5)\times SL(4)$} &  \multicolumn{1}{p{3.32cm}|}{\centering Ext. sq.$\,\otimes$ Stan. } &  \multicolumn{1}{p{1.72cm}|}{\centering $D_8$ node 5} &  \multicolumn{1}{p{1.75cm}|}{\centering $A_8$ node 4} &  \multicolumn{1}{p{2.06cm}|}{\centering $D_5$ node 5} &  \multicolumn{1}{p{1.56cm}|}{\centering} &  \multicolumn{1}{p{1.03cm}|}{\centering} \\
 \multicolumn{1}{|p{1.09cm}|}{\centering} &  \multicolumn{1}{p{2.66cm}|}{\centering $\dim \frak u_i$} &  \multicolumn{1}{p{3.32cm}|}{\centering 40} &  \multicolumn{1}{p{1.72cm}|}{\centering 30} &  \multicolumn{1}{p{1.75cm}|}{\centering 20} &  \multicolumn{1}{p{2.06cm}|}{\centering 10} &  \multicolumn{1}{p{1.56cm}|}{\centering 4} &  \multicolumn{1}{p{1.03cm}|}{\centering} \\
 \multicolumn{1}{|p{1.09cm}|}{\centering} &  \multicolumn{1}{p{2.66cm}|}{\centering action} &  \multicolumn{1}{p{3.32cm}|}{\centering $\varpi_3+\varpi_8$} &  \multicolumn{1}{p{1.72cm}|}{\centering $\varpi_2+\varpi_7$} &  \multicolumn{1}{p{1.75cm}|}{\centering $\varpi_1+\varpi_6$} &  \multicolumn{1}{p{2.06cm}|}{\centering $\varpi_4$} &  \multicolumn{1}{p{1.56cm}|}{\centering $\varpi_8$} &  \multicolumn{1}{p{1.03cm}|}{\centering} \\
\hline
 \multicolumn{1}{|p{1.09cm}|}{\centering 6} &  \multicolumn{1}{p{2.66cm}|}{\raggedright $SO(5,5)\times SL(3)$} &  \multicolumn{1}{p{3.32cm}|}{\centering Spin$\,\otimes$ Stan.} &  \multicolumn{1}{p{1.72cm}|}{\centering $D_8$ node 3} &  \multicolumn{1}{p{1.75cm}|}{\centering $E_6$ node 1} &  \multicolumn{1}{p{2.06cm}|}{\centering} &  \multicolumn{1}{p{1.56cm}|}{\centering} &  \multicolumn{1}{p{1.03cm}|}{\centering} \\
 \multicolumn{1}{|p{1.09cm}|}{\centering} &  \multicolumn{1}{p{2.66cm}|}{\centering $\dim \frak u_i$} &  \multicolumn{1}{p{3.32cm}|}{\centering 48} &  \multicolumn{1}{p{1.72cm}|}{\centering 30} &  \multicolumn{1}{p{1.75cm}|}{\centering 16} &  \multicolumn{1}{p{2.06cm}|}{\centering 3} &  \multicolumn{1}{p{1.56cm}|}{\centering} &  \multicolumn{1}{p{1.03cm}|}{\centering} \\
 \multicolumn{1}{|p{1.09cm}|}{\centering} &  \multicolumn{1}{p{2.66cm}|}{\centering action} &  \multicolumn{1}{p{3.32cm}|}{\centering $\varpi_2+\varpi_8$} &  \multicolumn{1}{p{1.72cm}|}{\centering $\varpi_1+\varpi_7$} &  \multicolumn{1}{p{1.75cm}|}{\centering $\varpi_5$} &  \multicolumn{1}{p{2.06cm}|}{\centering $\varpi_8$} &  \multicolumn{1}{p{1.56cm}|}{\centering} &  \multicolumn{1}{p{1.03cm}|}{\centering} \\
\hline
 \multicolumn{1}{|p{1.09cm}|}{\centering 7} &  \multicolumn{1}{p{2.66cm}|}{\raggedright $E_6\times SL(2)$} &  \multicolumn{1}{p{3.32cm}|}{\centering {\bf 27} $\otimes$ Stan.} &  \multicolumn{1}{p{1.72cm}|}{\centering $E_7$ node 7} &  \multicolumn{1}{p{1.75cm}|}{\centering} &  \multicolumn{1}{p{2.06cm}|}{\centering} &  \multicolumn{1}{p{1.56cm}|}{\centering} &  \multicolumn{1}{p{1.03cm}|}{\centering} \\
 \multicolumn{1}{|p{1.09cm}|}{\centering} &  \multicolumn{1}{p{2.66cm}|}{\centering $\dim \frak u_i$} &  \multicolumn{1}{p{3.32cm}|}{\centering 54} &  \multicolumn{1}{p{1.72cm}|}{\centering 27} &  \multicolumn{1}{p{1.75cm}|}{\centering 2} &  \multicolumn{1}{p{2.06cm}|}{\centering} &  \multicolumn{1}{p{1.56cm}|}{\centering} &  \multicolumn{1}{p{1.03cm}|}{\centering} \\
 \multicolumn{1}{|p{1.09cm}|}{\centering} &  \multicolumn{1}{p{2.66cm}|}{\centering action} &  \multicolumn{1}{p{3.32cm}|}{\centering $\varpi_1+\varpi_8$} &  \multicolumn{1}{p{1.72cm}|}{\centering $\varpi_6$} &  \multicolumn{1}{p{1.75cm}|}{\centering $\varpi_8$} &  \multicolumn{1}{p{2.06cm}|}{\centering} &  \multicolumn{1}{p{1.56cm}|}{\centering} &  \multicolumn{1}{p{1.03cm}|}{\centering} \\
\hline
 \multicolumn{1}{|p{1.09cm}|}{\centering 8} &  \multicolumn{1}{p{2.66cm}|}{\raggedright $E_7$} &  \multicolumn{1}{p{3.32cm}|}{\centering {\bf 56}} &  \multicolumn{1}{p{1.72cm}|}{\centering} &  \multicolumn{1}{p{1.75cm}|}{\centering} &  \multicolumn{1}{p{2.06cm}|}{\centering} &  \multicolumn{1}{p{1.56cm}|}{\centering} &  \multicolumn{1}{p{1.03cm}|}{\centering} \\
 \multicolumn{1}{|p{1.09cm}|}{\centering} &  \multicolumn{1}{p{2.66cm}|}{\centering $\dim \frak u_i$} &  \multicolumn{1}{p{3.32cm}|}{\centering 56} &  \multicolumn{1}{p{1.72cm}|}{\centering 1} &  \multicolumn{1}{p{1.75cm}|}{\centering} &  \multicolumn{1}{p{2.06cm}|}{\centering} &  \multicolumn{1}{p{1.56cm}|}{\centering} &  \multicolumn{1}{p{1.03cm}|}{\centering} \\
 \multicolumn{1}{|p{1.09cm}|}{\centering} &  \multicolumn{1}{p{2.66cm}|}{\centering action} &  \multicolumn{1}{p{3.32cm}|}{\centering $\varpi_7$} &  \multicolumn{1}{p{1.72cm}|}{\centering} &  \multicolumn{1}{p{1.75cm}|}{\centering} &  \multicolumn{1}{p{2.06cm}|}{\centering} &  \multicolumn{1}{p{1.56cm}|}{\centering} &  \multicolumn{1}{p{1.03cm}|}{\centering} \\
\hline
\end{tabular}\end{center}
\end{sidewaystable}

\subsubsection{$E_8$ Node 1}

In this case $P=LU$ where $U$ is a 78-dimensional 2-step nilpotent group and $\frak u=\frak u_1\oplus \frak u_2$, with $\dim\frak u_1=64$   and $\dim\frak u_2=14$.  The semisimple part $[L,L]$ of $L$ is of type $SO(7,7)$, which acts on $\frak u_1$ by the spin representation  with  the following orbits:

\begin{center}\begin{tabular}{p{7.2cm}p{1.63cm}p{1.84cm}}
 \cline{1-1} \cline{2-2} \cline{3-3}
 \multicolumn{1}{|p{7.2cm}|}{\centering Orbit Basepoint} &  \multicolumn{1}{p{1.63cm}|}{\centering Dimension} &  \multicolumn{1}{p{1.84cm}|}{\centering Coadjoint orbit intersected} \\
\cline{1-1} \cline{2-2} \cline{3-3}
 \multicolumn{1}{|p{7.2cm}|}{\centering 11122111+11221111+11233210+12232210} &  \multicolumn{1}{p{1.63cm}|}{\centering 64} &  \multicolumn{1}{p{1.84cm}|}{\centering 20000000} \\
 \cline{1-1} \cline{2-2} \cline{3-3}
 \multicolumn{1}{|p{7.2cm}|}{\centering $\nrel{11222221+11232211+11233210+}{+12232111+12232210}$} &  \multicolumn{1}{p{1.63cm}|}{\centering 63} &  \multicolumn{1}{p{1.84cm}|}{\centering 00100000} \\
 \cline{1-1} \cline{2-2} \cline{3-3}
 \multicolumn{1}{|p{7.2cm}|}{\centering 11122221+11233211+12232211+12343210} &  \multicolumn{1}{p{1.63cm}|}{\centering 59} &  \multicolumn{1}{p{1.84cm}|}{\centering 00000100} \\
 \cline{1-1} \cline{2-2} \cline{3-3}
 \multicolumn{1}{|p{7.2cm}|}{\centering 11222221+12243211+12343210} &  \multicolumn{1}{p{1.63cm}|}{\centering 54} &  \multicolumn{1}{p{1.84cm}|}{\centering 10000001} \\
 \cline{1-1} \cline{2-2} \cline{3-3}
 \multicolumn{1}{|p{7.2cm}|}{\centering 11233321+12233221+12243211+12343210} &  \multicolumn{1}{p{1.63cm}|}{\centering 50} &  \multicolumn{1}{p{1.84cm}|}{\centering 01000000} \\
 \cline{1-1} \cline{2-2} \cline{3-3}
 \multicolumn{1}{|p{7.2cm}|}{\centering 11122221+12343211} &  \multicolumn{1}{p{1.63cm}|}{\centering 44} &  \multicolumn{1}{p{1.84cm}|}{\centering 00000002} \\
 \cline{1-1} \cline{2-2} \cline{3-3}
 \multicolumn{1}{|p{7.2cm}|}{\centering 12233321+12243221+12343211} &  \multicolumn{1}{p{1.63cm}|}{\centering 43} &  \multicolumn{1}{p{1.84cm}|}{\centering 00000010} \\
 \cline{1-1} \cline{2-2} \cline{3-3}
 \multicolumn{1}{|p{7.2cm}|}{\centering 12244321+12343321} &  \multicolumn{1}{p{1.63cm}|}{\centering 35} &  \multicolumn{1}{p{1.84cm}|}{\centering 10000000} \\
 \cline{1-1} \cline{2-2} \cline{3-3}
 \multicolumn{1}{|p{7.2cm}|}{\centering 13354321} &  \multicolumn{1}{p{1.63cm}|}{\centering 22} &  \multicolumn{1}{p{1.84cm}|}{\centering 00000001} \\
 \cline{1-1} \cline{2-2} \cline{3-3}
 \multicolumn{1}{|p{7.2cm}|}{\centering 00000000} &  \multicolumn{1}{p{1.63cm}|}{\centering 0} &  \multicolumn{1}{p{1.84cm}|}{\centering 00000000} \\
 \cline{1-1} \cline{2-2} \cline{3-3}
\end{tabular}\end{center}

The action on the 14-dimensional $\frak u_2$ is the vector representation of $SO(7,7)$, and occurs for $SO(8,8)$, node 1.  It has 3 orbits there, of dimensions 14, 13, and 0 and respective basepoints 11111101 + 11111110, 12222211, and 00000000 (in ${\frak{so}}(8,8)$).  This translates into the following orbits in  $\frak u_2$:
\begin{center}\begin{tabular}{|cp{1.66cm}||p{2.56cm}||}
\hline
 \multicolumn{1}{|c|}{Orbit Basepoint} &  \multicolumn{1}{p{1.66cm}|}{\centering Dimension} &  \multicolumn{1}{p{2.56cm}|}{\centering Coadjoint orbit intersected} \\
\hline
 \multicolumn{1}{|c|}{23354321+22454321} &  \multicolumn{1}{p{1.66cm}|}{\centering 14} &  \multicolumn{1}{p{2.56cm}|}{\centering 10000000} \\
 \cline{1-1} \cline{2-2} \cline{3-3}
 \multicolumn{1}{|c|}{23465432} &  \multicolumn{1}{p{1.66cm}|}{\centering 13} &  \multicolumn{1}{p{2.56cm}|}{\centering 00000001} \\
 \cline{1-1} \cline{2-2} \cline{3-3}
 \multicolumn{1}{|c|}{00000000} &  \multicolumn{1}{p{1.66cm}|}{\centering 0} &  \multicolumn{1}{p{2.56cm}|}{\centering 00000000} \\
\hline
\end{tabular}\end{center}

\subsubsection{$E_8$ Node 2}

In this case $P=LU$ where $U$ is a 92-dimensional 3-step nilpotent group and $\frak u=\frak u_1\oplus \frak u_2\oplus \frak u_3$, with $\dim\frak u_1=56$, $\dim\frak u_2=28$, and  $\dim\frak u_3=8$.  The semisimple part $[L,L]$ of $L$ is of type $SL(8)$, which acts on $\frak u_1$ as it does on antisymmetric 3 tensors.  It has the following orbits:
\begin{center}\begin{tabular}{p{7.2cm}p{1.53cm}p{1.75cm}}
 \cline{1-1} \cline{2-2} \cline{3-3}
 \multicolumn{1}{|p{7.2cm}|}{\centering Orbit Basepoint} &  \multicolumn{1}{p{1.53cm}|}{\centering Dimension} &  \multicolumn{1}{p{1.75cm}|}{\centering Coadjoint orbit intersected} \\
\cline{1-1} \cline{2-2} \cline{3-3}
 \multicolumn{1}{|p{7.2cm}|}{\centering $\nrel{01121111+01122111+01122210+}{+11111111+11122110+11221100}$} &  \multicolumn{1}{p{1.53cm}|}{\centering 56} &  \multicolumn{1}{p{1.75cm}|}{\centering 02000000} \\
 \cline{1-1} \cline{2-2} \cline{3-3}
 \multicolumn{1}{|p{7.2cm}|}{\centering $\nrel{01122111+01122210+11121111+}{+11122110+11221110+11222100}$} &  \multicolumn{1}{p{1.53cm}|}{\centering 55} &  \multicolumn{1}{p{1.75cm}|}{\centering 00010000} \\
 \cline{1-1} \cline{2-2} \cline{3-3}
 \multicolumn{1}{|p{7.2cm}|}{\centering $\nrel{01122211+11111111+11122110+}{+11221110+11232100}$} &  \multicolumn{1}{p{1.53cm}|}{\centering 53} &  \multicolumn{1}{p{1.75cm}|}{\centering 10000100} \\
 \cline{1-1} \cline{2-2} \cline{3-3}
 \multicolumn{1}{|p{7.2cm}|}{\centering $\nrel{01122210+01122211+11122111+}{+11221110+11232100}$} &  \multicolumn{1}{p{1.53cm}|}{\centering 52} &  \multicolumn{1}{p{1.75cm}|}{\centering 01000010} \\
 \cline{1-1} \cline{2-2} \cline{3-3}
 \multicolumn{1}{|p{7.2cm}|}{\centering $\nrel{01122221+11111111+11122210+}{+11222110+11232100}$} &  \multicolumn{1}{p{1.53cm}|}{\centering 50} &  \multicolumn{1}{p{1.75cm}|}{\centering 00100001} \\
 \cline{1-1} \cline{2-2} \cline{3-3}
 \multicolumn{1}{|p{7.2cm}|}{\centering 01122210+01122211+11122111+11221110} &  \multicolumn{1}{p{1.53cm}|}{\centering 48} &  \multicolumn{1}{p{1.75cm}|}{\centering 00000020} \\
 \cline{1-1} \cline{2-2} \cline{3-3}
 \multicolumn{1}{|p{7.2cm}|}{\centering $\nrel{01122221+11122111+11122210+}{+11221111+11222110+11232100}$} &  \multicolumn{1}{p{1.53cm}|}{\centering 48} &  \multicolumn{1}{p{1.75cm}|}{\centering 00001000} \\
 \cline{1-1} \cline{2-2} \cline{3-3}
 \multicolumn{1}{|p{7.2cm}|}{\centering 01122211+11111111+11222210+11232110} &  \multicolumn{1}{p{1.53cm}|}{\centering 47} &  \multicolumn{1}{p{1.75cm}|}{\centering 00000101} \\
 \cline{1-1} \cline{2-2} \cline{3-3}
 \multicolumn{1}{|p{7.2cm}|}{\centering $\nrel{01122211+11122111+11221111+}{+11222210+11232110}$} &  \multicolumn{1}{p{1.53cm}|}{\centering 46} &  \multicolumn{1}{p{1.75cm}|}{\centering 10000010} \\
 \cline{1-1} \cline{2-2} \cline{3-3}
 \multicolumn{1}{|p{7.2cm}|}{\centering 01122211+11122111+11222210+11232110} &  \multicolumn{1}{p{1.53cm}|}{\centering 44} &  \multicolumn{1}{p{1.75cm}|}{\centering 20000000} \\
 \cline{1-1} \cline{2-2} \cline{3-3}
 \multicolumn{1}{|p{7.2cm}|}{\centering $\nrel{01122221+11122211+11222111+}{+11222210+11232110}$} &  \multicolumn{1}{p{1.53cm}|}{\centering 43} &  \multicolumn{1}{p{1.75cm}|}{\centering 00100000} \\
 \cline{1-1} \cline{2-2} \cline{3-3}
 \multicolumn{1}{|p{7.2cm}|}{\centering $\nrel{11122111+11122210+11221111+}{+11222110+11232100}$} &  \multicolumn{1}{p{1.53cm}|}{\centering 42} &  \multicolumn{1}{p{1.75cm}|}{\centering 00100000} \\
 \cline{1-1} \cline{2-2} \cline{3-3}
 \multicolumn{1}{|p{7.2cm}|}{\centering 01121111+11111111+11233210} &  \multicolumn{1}{p{1.53cm}|}{\centering 41} &  \multicolumn{1}{p{1.75cm}|}{\centering 10000002} \\
 \cline{1-1} \cline{2-2} \cline{3-3}
 \multicolumn{1}{|p{7.2cm}|}{\centering 11122211+11222111+11222210+11232110} &  \multicolumn{1}{p{1.53cm}|}{\centering 41} &  \multicolumn{1}{p{1.75cm}|}{\centering 00000100} \\
 \cline{1-1} \cline{2-2} \cline{3-3}
 \multicolumn{1}{|p{7.2cm}|}{\centering 01122221+11122211+11221111+11233210} &  \multicolumn{1}{p{1.53cm}|}{\centering 40} &  \multicolumn{1}{p{1.75cm}|}{\centering 00000100} \\
 \cline{1-1} \cline{2-2} \cline{3-3}
 \multicolumn{1}{|p{7.2cm}|}{\centering 11122221+11221111+11233210} &  \multicolumn{1}{p{1.53cm}|}{\centering 38} &  \multicolumn{1}{p{1.75cm}|}{\centering 10000001} \\
 \cline{1-1} \cline{2-2} \cline{3-3}
 \multicolumn{1}{|p{7.2cm}|}{\centering 11122221+11222211+11232111+11233210} &  \multicolumn{1}{p{1.53cm}|}{\centering 35} &  \multicolumn{1}{p{1.75cm}|}{\centering 01000000} \\
 \cline{1-1} \cline{2-2} \cline{3-3}
 \multicolumn{1}{|p{7.2cm}|}{\centering 11221111+11233210} &  \multicolumn{1}{p{1.53cm}|}{\centering 32} &  \multicolumn{1}{p{1.75cm}|}{\centering 00000002} \\
 \cline{1-1} \cline{2-2} \cline{3-3}
 \multicolumn{1}{|p{7.2cm}|}{\centering 11222221+11232211+11233210} &  \multicolumn{1}{p{1.53cm}|}{\centering 31} &  \multicolumn{1}{p{1.75cm}|}{\centering 00000010} \\
 \cline{1-1} \cline{2-2} \cline{3-3}
 \multicolumn{1}{|p{7.2cm}|}{\centering 11122221+11222211+11232111} &  \multicolumn{1}{p{1.53cm}|}{\centering 28} &  \multicolumn{1}{p{1.75cm}|}{\centering 00000010} \\
 \cline{1-1} \cline{2-2} \cline{3-3}
 \multicolumn{1}{|p{7.2cm}|}{\centering 11232221+11233211} &  \multicolumn{1}{p{1.53cm}|}{\centering 25} &  \multicolumn{1}{p{1.75cm}|}{\centering 10000000} \\
 \cline{1-1} \cline{2-2} \cline{3-3}
 \multicolumn{1}{|p{7.2cm}|}{\centering 11233321} &  \multicolumn{1}{p{1.53cm}|}{\centering 16} &  \multicolumn{1}{p{1.75cm}|}{\centering 00000001} \\
 \cline{1-1} \cline{2-2} \cline{3-3}
 \multicolumn{1}{|p{7.2cm}|}{\centering 00000000} &  \multicolumn{1}{p{1.53cm}|}{\centering 0} &  \multicolumn{1}{p{1.75cm}|}{\centering 00000000} \\
 \cline{1-1} \cline{2-2} \cline{3-3}
\end{tabular}\end{center}

The 28 dimensional action of $SL(8)$ on $\frak u_2$ is the exterior square action, which arises for $SO(8,8)$, node 8.  The action there has 5 orbits, of dimensions 28, 27, 22, 13, and 0, with respective basepoints 00012211 + 00111211 + 01111111 + 11111101, 00122211 + 01112211 + 11111211, 01222211 + 11122211, 12222211, and 00000000 (in ${\frak{so}}(8,8)$).  The orbits here are given by
\begin{center}\begin{tabular}{||p{6.72cm}p{1.69cm}||p{1.47cm}||}
\hline
 \multicolumn{1}{|p{6.72cm}|}{\centering Orbit Basepoint} &  \multicolumn{1}{p{1.69cm}|}{\centering Dimension} &  \multicolumn{1}{p{1.47cm}|}{\centering Coadjoint orbit intersected} \\
\hline
 \multicolumn{1}{|p{6.72cm}|}{\centering 12233321+12243221+12343211+22343210} &  \multicolumn{1}{p{1.69cm}|}{\centering 28} &  \multicolumn{1}{p{1.47cm}|}{\centering 01000000} \\
 \cline{1-1} \cline{2-2} \cline{3-3}
 \multicolumn{1}{|p{6.72cm}|}{\centering 12244321+12343321+22343221} &  \multicolumn{1}{p{1.69cm}|}{\centering 27} &  \multicolumn{1}{p{1.47cm}|}{\centering 00000010} \\
 \cline{1-1} \cline{2-2} \cline{3-3}
 \multicolumn{1}{|p{6.72cm}|}{\centering 12354321+22344321} &  \multicolumn{1}{p{1.69cm}|}{\centering 22} &  \multicolumn{1}{p{1.47cm}|}{\centering 10000000} \\
 \cline{1-1} \cline{2-2} \cline{3-3}
 \multicolumn{1}{|p{6.72cm}|}{\centering 22454321} &  \multicolumn{1}{p{1.69cm}|}{\centering 13} &  \multicolumn{1}{p{1.47cm}|}{\centering 00000001} \\
 \cline{1-1} \cline{2-2} \cline{3-3}
 \multicolumn{1}{|p{6.72cm}|}{\centering 00000000} &  \multicolumn{1}{p{1.69cm}|}{\centering 0} &  \multicolumn{1}{p{1.47cm}|}{\centering 00000000} \\
\hline
\end{tabular}\end{center}

The action of $SL(8)$ on $\frak u_3$ is its standard action, and has 2 orbits:~zero and nonzero.

\subsubsection{$E_8$ Node 3}

In this case $P=LU$ where $U$ is a 98-dimensional 4-step nilpotent group and $\frak u=\frak u_1\oplus \frak u_2\oplus \frak u_3\oplus \frak u_4$, with $\dim\frak u_1=42$, $\dim\frak u_2=35$, $\dim\frak u_3=14$, and  $\dim\frak u_4=7$.  The semisimple part $[L,L]$ of $L$ is of type $SL(2)\times SL(7)$, which acts on $\frak u_1$ as the tensor product of the standard representation of the $SL(2)$ factor with the exterior square representation of the $SL(7)$ factor. It has the following orbits:

\begin{center}\begin{tabular}{p{7.01cm}p{1.56cm}p{1.91cm}}
 \cline{1-1} \cline{2-2} \cline{3-3}
 \multicolumn{1}{|p{7.01cm}|}{\centering Orbit Basepoint} &  \multicolumn{1}{p{1.56cm}|}{\centering Dimension} &  \multicolumn{1}{p{1.91cm}|}{\centering Coadjoint orbit intersected} \\
\cline{1-1} \cline{2-2} \cline{3-3}
 \multicolumn{1}{|p{7.01cm}|}{\centering $\nrel{01111111+01121110+01122100+}{+10111111+11111110+11121100}$} &  \multicolumn{1}{p{1.56cm}|}{\centering 42} &  \multicolumn{1}{p{1.91cm}|}{\centering 00000200} \\
 \cline{1-1} \cline{2-2} \cline{3-3}
 \multicolumn{1}{|p{7.01cm}|}{\centering $\nrel{01111111+01121110+10111111+}{+11111110+11122100}$} &  \multicolumn{1}{p{1.56cm}|}{\centering 40} &  \multicolumn{1}{p{1.91cm}|}{\centering 10000101} \\
 \cline{1-1} \cline{2-2} \cline{3-3}
 \multicolumn{1}{|p{7.01cm}|}{\centering 01111111+01121110+10111111+11122100} &  \multicolumn{1}{p{1.56cm}|}{\centering 38} &  \multicolumn{1}{p{1.91cm}|}{\centering 20000002} \\
 \cline{1-1} \cline{2-2} \cline{3-3}
 \multicolumn{1}{|p{7.01cm}|}{\centering $\nrel{01122111+01122210+10111111+}{+11111110+11121100}$} &  \multicolumn{1}{p{1.56cm}|}{\centering 37} &  \multicolumn{1}{p{1.91cm}|}{\centering 10000100} \\
 \cline{1-1} \cline{2-2} \cline{3-3}
 \multicolumn{1}{|p{7.01cm}|}{\centering 01111111+01121110+11111111+11122100} &  \multicolumn{1}{p{1.56cm}|}{\centering 36} &  \multicolumn{1}{p{1.91cm}|}{\centering 00000020} \\
 \cline{1-1} \cline{2-2} \cline{3-3}
 \multicolumn{1}{|p{7.01cm}|}{\centering 01122111+01122210+11111110+11121100} &  \multicolumn{1}{p{1.56cm}|}{\centering 35} &  \multicolumn{1}{p{1.91cm}|}{\centering 00000101} \\
 \cline{1-1} \cline{2-2} \cline{3-3}
 \multicolumn{1}{|p{7.01cm}|}{\centering $\nrel{01122111+01122210+11111111+}{+11121110+11122100}$} &  \multicolumn{1}{p{1.56cm}|}{\centering 34} &  \multicolumn{1}{p{1.91cm}|}{\centering 10000010} \\
 \cline{1-1} \cline{2-2} \cline{3-3}
 \multicolumn{1}{|p{7.01cm}|}{\centering 01122221+10111111+11111100+11121000} &  \multicolumn{1}{p{1.56cm}|}{\centering 33} &  \multicolumn{1}{p{1.91cm}|}{\centering 00000101} \\
 \cline{1-1} \cline{2-2} \cline{3-3}
 \multicolumn{1}{|p{7.01cm}|}{\centering 01121111+01122210+11111111+11122110} &  \multicolumn{1}{p{1.56cm}|}{\centering 32} &  \multicolumn{1}{p{1.91cm}|}{\centering 20000000} \\
 \cline{1-1} \cline{2-2} \cline{3-3}
 \multicolumn{1}{|p{7.01cm}|}{\centering 01122221+11111100+11121000} &  \multicolumn{1}{p{1.56cm}|}{\centering 31} &  \multicolumn{1}{p{1.91cm}|}{\centering 10000002} \\
 \cline{1-1} \cline{2-2} \cline{3-3}
 \multicolumn{1}{|p{7.01cm}|}{\centering 01122111+01122210+11121111+11122110} &  \multicolumn{1}{p{1.56cm}|}{\centering 30} &  \multicolumn{1}{p{1.91cm}|}{\centering 00000100} \\
 \cline{1-1} \cline{2-2} \cline{3-3}
 \multicolumn{1}{|p{7.01cm}|}{\centering 01122221+11111111+11121110+11122100} &  \multicolumn{1}{p{1.56cm}|}{\centering 30} &  \multicolumn{1}{p{1.91cm}|}{\centering 00000100} \\
 \cline{1-1} \cline{2-2} \cline{3-3}
 \multicolumn{1}{|p{7.01cm}|}{\centering 01122111+11121111+11122210} &  \multicolumn{1}{p{1.56cm}|}{\centering 28} &  \multicolumn{1}{p{1.91cm}|}{\centering 10000001} \\
 \cline{1-1} \cline{2-2} \cline{3-3}
 \multicolumn{1}{|p{7.01cm}|}{\centering 01122111+11122210} &  \multicolumn{1}{p{1.56cm}|}{\centering 24} &  \multicolumn{1}{p{1.91cm}|}{\centering 00000002} \\
 \cline{1-1} \cline{2-2} \cline{3-3}
 \multicolumn{1}{|p{7.01cm}|}{\centering 01122221+11122111+11122210} &  \multicolumn{1}{p{1.56cm}|}{\centering 23} &  \multicolumn{1}{p{1.91cm}|}{\centering 00000010} \\
 \cline{1-1} \cline{2-2} \cline{3-3}
 \multicolumn{1}{|p{7.01cm}|}{\centering 11111111+11121110+11122100} &  \multicolumn{1}{p{1.56cm}|}{\centering 22} &  \multicolumn{1}{p{1.91cm}|}{\centering 00000010} \\
 \cline{1-1} \cline{2-2} \cline{3-3}
 \multicolumn{1}{|p{7.01cm}|}{\centering 11122111+11122210} &  \multicolumn{1}{p{1.56cm}|}{\centering 19} &  \multicolumn{1}{p{1.91cm}|}{\centering 10000000} \\
 \cline{1-1} \cline{2-2} \cline{3-3}
 \multicolumn{1}{|p{7.01cm}|}{\centering 01122221+11122211} &  \multicolumn{1}{p{1.56cm}|}{\centering 18} &  \multicolumn{1}{p{1.91cm}|}{\centering 10000000} \\
 \cline{1-1} \cline{2-2} \cline{3-3}
 \multicolumn{1}{|p{7.01cm}|}{\centering 11122221} &  \multicolumn{1}{p{1.56cm}|}{\centering 12} &  \multicolumn{1}{p{1.91cm}|}{\centering 00000001} \\
 \cline{1-1} \cline{2-2} \cline{3-3}
 \multicolumn{1}{|p{7.01cm}|}{\centering 00000000} &  \multicolumn{1}{p{1.56cm}|}{\centering 0} &  \multicolumn{1}{p{1.91cm}|}{\centering 00000000} \\
 \cline{1-1} \cline{2-2} \cline{3-3}
\end{tabular}\end{center}

The action of $SL(7)$ on the 35-dimensional piece $\frak u_2$ is the exterior cube action from $E_7$, node 2, whose orbits were listed in section~\ref{sec:E7node2}. Its orbits are as follows:
\begin{center}\begin{tabular}{||p{7.3cm}p{1.66cm}p{1.75cm}||}
\hline
 \multicolumn{1}{|p{7.3cm}|}{\centering Orbit Basepoint} &  \multicolumn{1}{p{1.66cm}|}{\centering Dimension} &  \multicolumn{1}{p{1.75cm}|}{\centering Coadjoint orbit intersected} \\
\hline
 \multicolumn{1}{|p{7.3cm}|}{\centering $\nrel{12232210+11233210+12232111+}{+11232211+11222221}$} &  \multicolumn{1}{p{1.66cm}|}{\centering 35} &  \multicolumn{1}{p{1.75cm}|}{\centering 00100000} \\
 \cline{1-1} \cline{2-2} \cline{3-3}
 \multicolumn{1}{|p{7.3cm}|}{\centering 12233210+12232211+11233211+11232221} &  \multicolumn{1}{p{1.66cm}|}{\centering 34} &  \multicolumn{1}{p{1.75cm}|}{\centering 00000100} \\
 \cline{1-1} \cline{2-2} \cline{3-3}
 \multicolumn{1}{|p{7.3cm}|}{\centering 12243210+12232111+11233321} &  \multicolumn{1}{p{1.66cm}|}{\centering 31} &  \multicolumn{1}{p{1.75cm}|}{\centering 10000001} \\
 \cline{1-1} \cline{2-2} \cline{3-3}
 \multicolumn{1}{|p{7.3cm}|}{\centering 12243210+12233211+12232221+11233321} &  \multicolumn{1}{p{1.66cm}|}{\centering 28} &  \multicolumn{1}{p{1.75cm}|}{\centering 01000000} \\
 \cline{1-1} \cline{2-2} \cline{3-3}
 \multicolumn{1}{|p{7.3cm}|}{\centering 12232111+11233321} &  \multicolumn{1}{p{1.66cm}|}{\centering 26} &  \multicolumn{1}{p{1.75cm}|}{\centering 00000002} \\
 \cline{1-1} \cline{2-2} \cline{3-3}
 \multicolumn{1}{|p{7.3cm}|}{\centering 12243211+12233221+11233321} &  \multicolumn{1}{p{1.66cm}|}{\centering 25} &  \multicolumn{1}{p{1.75cm}|}{\centering 00000010} \\
 \cline{1-1} \cline{2-2} \cline{3-3}
 \multicolumn{1}{|p{7.3cm}|}{\centering 12243210+12233211+12232221} &  \multicolumn{1}{p{1.66cm}|}{\centering 21} &  \multicolumn{1}{p{1.75cm}|}{\centering 00000010} \\
 \cline{1-1} \cline{2-2} \cline{3-3}
 \multicolumn{1}{|p{7.3cm}|}{\centering 12243221+12233321} &  \multicolumn{1}{p{1.66cm}|}{\centering 20} &  \multicolumn{1}{p{1.75cm}|}{\centering 10000000} \\
 \cline{1-1} \cline{2-2} \cline{3-3}
 \multicolumn{1}{|p{7.3cm}|}{\centering 12244321} &  \multicolumn{1}{p{1.66cm}|}{\centering 13} &  \multicolumn{1}{p{1.75cm}|}{\centering 00000001} \\
 \cline{1-1} \cline{2-2} \cline{3-3}
 \multicolumn{1}{|p{7.3cm}|}{\centering 00000000} &  \multicolumn{1}{p{1.66cm}|}{\centering 0} &  \multicolumn{1}{p{1.75cm}|}{\centering 0000000} \\
\hline
\end{tabular}\end{center}

The 14-dimensional tensor product action of $SL(2)\times SL(7)$ on $\frak u_3$ occurs for $SL(9)$, node 7, and has 3 orbits, classified by rank.  They have dimensions 14, 8, and 0, with respective basepoints 01111111+11111110, 11111111, and 00000000 (in ${\frak{sl}}(9)$).  The orbits here are
\begin{center}\begin{tabular}{||p{3.38cm}p{1.69cm}||p{4.53cm}||}
\hline
 \multicolumn{1}{|p{3.38cm}|}{\centering Orbit Basepoint} &  \multicolumn{1}{p{1.69cm}|}{\centering Dimension} &  \multicolumn{1}{p{4.53cm}|}{\centering Coadjoint orbit intersected} \\
\hline
 \multicolumn{1}{|p{3.38cm}|}{\centering 13354321+22354321} &  \multicolumn{1}{p{1.69cm}|}{\centering 14} &  \multicolumn{1}{p{4.53cm}|}{\centering 10000000} \\
 \cline{1-1} \cline{2-2} \cline{3-3}
 \multicolumn{1}{|p{3.38cm}|}{\centering 23354321} &  \multicolumn{1}{p{1.69cm}|}{\centering 8} &  \multicolumn{1}{p{4.53cm}|}{\centering 00000001} \\
 \cline{1-1} \cline{2-2} \cline{3-3}
 \multicolumn{1}{|p{3.38cm}|}{\centering 00000000} &  \multicolumn{1}{p{1.69cm}|}{\centering 0} &  \multicolumn{1}{p{4.53cm}|}{\centering 00000000} \\
\hline
\end{tabular}\end{center}

The 7-dimensional action of $SL(7)$ on $\frak u_4$ is its standard action, and has 2 nonzero orbits:~zero and nonzero.

\subsubsection{$E_8$ Node 4}

This is the most intricate configuration in that it has the deepest grading.  Here $P=LU$ where $U$ is a 116-dimensional 6-step nilpotent group and $\frak u=\frak u_1\oplus \frak u_2\oplus \frak u_3\oplus \frak u_4\oplus \frak u_5\oplus \frak u_6$, with $\dim\frak u_1=30$, $\dim\frak u_2=30$, $\dim\frak u_3=20$,   $\dim\frak u_4=15$, $\dim\frak u_5=6$, and $\dim\frak u_6=5$.  The semisimple part $[L,L]$ of $L$ is of type $SL(3)\times SL(2)\times SL(5)$, which acts on $\frak u_1$ as the tensor product of the standard representations of each factor.  The orbits are given as follows:

\begin{center}\begin{tabular}{p{7.61cm}p{1.44cm}p{1.66cm}}
 \cline{1-1} \cline{2-2} \cline{3-3}
 \multicolumn{1}{|p{7.61cm}|}{\centering Orbit Basepoint} &  \multicolumn{1}{p{1.44cm}|}{\centering Dimension} &  \multicolumn{1}{p{1.66cm}|}{\centering Coadjoint orbit intersected} \\
\cline{1-1} \cline{2-2} \cline{3-3}
 \multicolumn{1}{|p{7.61cm}|}{\centering $\nrel{00011111+00111110+01011110+}{+01111100+10111000+11110000}$} &  \multicolumn{1}{p{1.44cm}|}{\centering 30} &  \multicolumn{1}{p{1.66cm}|}{\centering 20000200} \\
 \cline{1-1} \cline{2-2} \cline{3-3}
 \multicolumn{1}{|p{7.61cm}|}{\centering $\nrel{00111110+01011111+01111000+}{+10111100+11110000}$} &  \multicolumn{1}{p{1.44cm}|}{\centering 28} &  \multicolumn{1}{p{1.66cm}|}{\centering 20000101} \\
 \cline{1-1} \cline{2-2} \cline{3-3}
 \multicolumn{1}{|p{7.61cm}|}{\centering $\nrel{00011111+00111110+01011110+}{+01111100+10111100+11111000}$} &  \multicolumn{1}{p{1.44cm}|}{\centering 28} &  \multicolumn{1}{p{1.66cm}|}{\centering 00000200} \\
 \cline{1-1} \cline{2-2} \cline{3-3}
 \multicolumn{1}{|p{7.61cm}|}{\centering $\nrel{00111110+01011111+01111100+}{+10111100+11111000}$} &  \multicolumn{1}{p{1.44cm}|}{\centering 27} &  \multicolumn{1}{p{1.66cm}|}{\centering 10000101} \\
 \cline{1-1} \cline{2-2} \cline{3-3}
 \multicolumn{1}{|p{7.61cm}|}{\centering 00011111+01111110+10111100+11111000} &  \multicolumn{1}{p{1.44cm}|}{\centering 26} &  \multicolumn{1}{p{1.66cm}|}{\centering 20000002} \\
 \cline{1-1} \cline{2-2} \cline{3-3}
 \multicolumn{1}{|p{7.61cm}|}{\centering $\nrel{00111111+01011111+01111100+}{+10111110+11111000}$} &  \multicolumn{1}{p{1.44cm}|}{\centering 25} &  \multicolumn{1}{p{1.66cm}|}{\centering 10000100} \\
 \cline{1-1} \cline{2-2} \cline{3-3}
 \multicolumn{1}{|p{7.61cm}|}{\centering 00011111+01011111+01111110+10111100} &  \multicolumn{1}{p{1.44cm}|}{\centering 24} &  \multicolumn{1}{p{1.66cm}|}{\centering 00000020} \\
 \cline{1-1} \cline{2-2} \cline{3-3}
 \multicolumn{1}{|p{7.61cm}|}{\centering 00111111+01011110+10111110+11111100} &  \multicolumn{1}{p{1.44cm}|}{\centering 23} &  \multicolumn{1}{p{1.66cm}|}{\centering 00000101} \\
 \cline{1-1} \cline{2-2} \cline{3-3}
 \multicolumn{1}{|p{7.61cm}|}{\centering 01011110+01111100+10111111+11111000} &  \multicolumn{1}{p{1.44cm}|}{\centering 23} &  \multicolumn{1}{p{1.66cm}|}{\centering 00000101} \\
 \cline{1-1} \cline{2-2} \cline{3-3}
 \multicolumn{1}{|p{7.61cm}|}{\centering 00111111+01111100+10111110+11111000} &  \multicolumn{1}{p{1.44cm}|}{\centering 22} &  \multicolumn{1}{p{1.66cm}|}{\centering 20000000} \\
 \cline{1-1} \cline{2-2} \cline{3-3}
 \multicolumn{1}{|p{7.61cm}|}{\centering $\nrel{00111111+01011111+01111110+}{+10111110+11111100}$} &  \multicolumn{1}{p{1.44cm}|}{\centering 22} &  \multicolumn{1}{p{1.66cm}|}{\centering 10000010} \\
 \cline{1-1} \cline{2-2} \cline{3-3}
 \multicolumn{1}{|p{7.61cm}|}{\centering 01011110+01111100+10111111} &  \multicolumn{1}{p{1.44cm}|}{\centering 21} &  \multicolumn{1}{p{1.66cm}|}{\centering 10000002} \\
 \cline{1-1} \cline{2-2} \cline{3-3}
 \multicolumn{1}{|p{7.61cm}|}{\centering 00111111+01011111+10111110+11111100} &  \multicolumn{1}{p{1.44cm}|}{\centering 20} &  \multicolumn{1}{p{1.66cm}|}{\centering 20000000} \\
 \cline{1-1} \cline{2-2} \cline{3-3}
 \multicolumn{1}{|p{7.61cm}|}{\centering 00111111+01111110+10111110+11111100} &  \multicolumn{1}{p{1.44cm}|}{\centering 20} &  \multicolumn{1}{p{1.66cm}|}{\centering 00000100} \\
 \cline{1-1} \cline{2-2} \cline{3-3}
 \multicolumn{1}{|p{7.61cm}|}{\centering 01011111+01111110+10111111+11111100} &  \multicolumn{1}{p{1.44cm}|}{\centering 20} &  \multicolumn{1}{p{1.66cm}|}{\centering 00000100} \\
 \cline{1-1} \cline{2-2} \cline{3-3}
 \multicolumn{1}{|p{7.61cm}|}{\centering 01111110+10111111+11111100} &  \multicolumn{1}{p{1.44cm}|}{\centering 19} &  \multicolumn{1}{p{1.66cm}|}{\centering 10000001} \\
 \cline{1-1} \cline{2-2} \cline{3-3}
 \multicolumn{1}{|p{7.61cm}|}{\centering 00111111+01011111+01111110+10111110} &  \multicolumn{1}{p{1.44cm}|}{\centering 18} &  \multicolumn{1}{p{1.66cm}|}{\centering 00000100} \\
 \cline{1-1} \cline{2-2} \cline{3-3}
 \multicolumn{1}{|p{7.61cm}|}{\centering 00111111+01011111+11111110} &  \multicolumn{1}{p{1.44cm}|}{\centering 17} &  \multicolumn{1}{p{1.66cm}|}{\centering 10000001} \\
 \cline{1-1} \cline{2-2} \cline{3-3}
 \multicolumn{1}{|p{7.61cm}|}{\centering 00111111+11111110} &  \multicolumn{1}{p{1.44cm}|}{\centering 16} &  \multicolumn{1}{p{1.66cm}|}{\centering 00000002} \\
 \cline{1-1} \cline{2-2} \cline{3-3}
 \multicolumn{1}{|p{7.61cm}|}{\centering 01011111+01111110+11111100} &  \multicolumn{1}{p{1.44cm}|}{\centering 16} &  \multicolumn{1}{p{1.66cm}|}{\centering 00000010} \\
 \cline{1-1} \cline{2-2} \cline{3-3}
 \multicolumn{1}{|p{7.61cm}|}{\centering 01111111+10111111+11111110} &  \multicolumn{1}{p{1.44cm}|}{\centering 15} &  \multicolumn{1}{p{1.66cm}|}{\centering 00000010} \\
 \cline{1-1} \cline{2-2} \cline{3-3}
 \multicolumn{1}{|p{7.61cm}|}{\centering 01111111+11111110} &  \multicolumn{1}{p{1.44cm}|}{\centering 13} &  \multicolumn{1}{p{1.66cm}|}{\centering 10000000} \\
 \cline{1-1} \cline{2-2} \cline{3-3}
  \multicolumn{1}{|p{7.61cm}|}{\centering 10111111+11111110} &  \multicolumn{1}{p{1.44cm}|}{\centering 12} &  \multicolumn{1}{p{1.66cm}|}{\centering 10000000} \\
 \cline{1-1} \cline{2-2} \cline{3-3}
   \multicolumn{1}{|p{7.61cm}|}{\centering 01111111+10111111} &  \multicolumn{1}{p{1.44cm}|}{\centering 10} &  \multicolumn{1}{p{1.66cm}|}{\centering 10000000} \\
 \cline{1-1} \cline{2-2} \cline{3-3}
   \multicolumn{1}{|p{7.61cm}|}{\centering 11111111} &  \multicolumn{1}{p{1.44cm}|}{\centering 8} &  \multicolumn{1}{p{1.66cm}|}{\centering 00000001} \\
 \cline{1-1} \cline{2-2} \cline{3-3}
   \multicolumn{1}{|p{7.61cm}|}{\centering 00000000} &  \multicolumn{1}{p{1.44cm}|}{\centering 0} &  \multicolumn{1}{p{1.66cm}|}{\centering 00000000} \\
 \cline{1-1} \cline{2-2} \cline{3-3}
\end{tabular}\end{center}

The action on $\frak u_2$ is the tensor product of the standard action of $SL(3)$ with the exterior square representation of $SL(5)$.  It occurs for $E_7$, node 5, whose orbits were listed in section~\ref{sec:E7node5}.  The orbits from there translate here to the following:
\begin{center}\begin{tabular}{||p{7.2cm}p{1.56cm}p{1.72cm}}
 \cline{1-1} \cline{2-2} \cline{3-3}
 \multicolumn{1}{|p{7.2cm}|}{\centering Orbit Basepoint} &  \multicolumn{1}{p{1.56cm}|}{\centering Dimension} &  \multicolumn{1}{p{1.72cm}|}{\centering Coadjoint orbit intersected} \\
\cline{1-1} \cline{2-2} \cline{3-3}
 \multicolumn{1}{|p{7.2cm}|}{\centering $\nrel{11221110+11222100+11122110+}{+01122210+11121111+01122111}$} &  \multicolumn{1}{p{1.56cm}|}{\centering 30} &  \multicolumn{1}{p{1.72cm}|}{\centering 00010000} \\
 \cline{1-1} \cline{2-2} \cline{3-3}
 \multicolumn{1}{|p{7.2cm}|}{\centering $\nrel{11221110+11222100+11122110+}{+11121111+01122211}$} &  \multicolumn{1}{p{1.56cm}|}{\centering 29} &  \multicolumn{1}{p{1.72cm}|}{\centering 10000100} \\
 \cline{1-1} \cline{2-2} \cline{3-3}
 \multicolumn{1}{|p{7.2cm}|}{\centering $\nrel{11221110+11222100+11122110+}{+11221111+01122211}$} &  \multicolumn{1}{p{1.56cm}|}{\centering 28} &  \multicolumn{1}{p{1.72cm}|}{\centering 01000010} \\
 \cline{1-1} \cline{2-2} \cline{3-3}
 \multicolumn{1}{|p{7.2cm}|}{\centering 11221110+11122110+11221111+01122211} &  \multicolumn{1}{p{1.56cm}|}{\centering 27} &  \multicolumn{1}{p{1.72cm}|}{\centering 00000020} \\
 \cline{1-1} \cline{2-2} \cline{3-3}
 \multicolumn{1}{|p{7.2cm}|}{\centering $\nrel{11221110+11222100+11122210+}{+11121111+01122221}$} &  \multicolumn{1}{p{1.56cm}|}{\centering 27} &  \multicolumn{1}{p{1.72cm}|}{\centering 00100001} \\
 \cline{1-1} \cline{2-2} \cline{3-3}
 \multicolumn{1}{|p{7.2cm}|}{\centering 11222110+11122210+11121111+01122211} &  \multicolumn{1}{p{1.56cm}|}{\centering 26} &  \multicolumn{1}{p{1.72cm}|}{\centering 00000101} \\
 \cline{1-1} \cline{2-2} \cline{3-3}
 \multicolumn{1}{|p{7.2cm}|}{\centering $\nrel{11222100+11122210+11221111+}{+11122111+01122221}$} &  \multicolumn{1}{p{1.56cm}|}{\centering 25} &  \multicolumn{1}{p{1.72cm}|}{\centering 10000010} \\
 \cline{1-1} \cline{2-2} \cline{3-3}
 \multicolumn{1}{|p{7.2cm}|}{\centering 11221110+11222100+11121111+01122221} &  \multicolumn{1}{p{1.56cm}|}{\centering 24} &  \multicolumn{1}{p{1.72cm}|}{\centering 00000101} \\
 \cline{1-1} \cline{2-2} \cline{3-3}
 \multicolumn{1}{|p{7.2cm}|}{\centering 11222210+11121111+01122111} &  \multicolumn{1}{p{1.56cm}|}{\centering 23} &  \multicolumn{1}{p{1.72cm}|}{\centering 10000002} \\
 \cline{1-1} \cline{2-2} \cline{3-3}
 \multicolumn{1}{|p{7.2cm}|}{\centering 11221110+11222100+11122211+01122221} &  \multicolumn{1}{p{1.56cm}|}{\centering 23} &  \multicolumn{1}{p{1.72cm}|}{\centering 20000000} \\
 \cline{1-1} \cline{2-2} \cline{3-3}
 \multicolumn{1}{|p{7.2cm}|}{\centering $\nrel{11222110+11122210+11221111+}{+11122111+01122221}$} &  \multicolumn{1}{p{1.56cm}|}{\centering 23} &  \multicolumn{1}{p{1.72cm}|}{\centering 00100000} \\
 \cline{1-1} \cline{2-2} \cline{3-3}
 \multicolumn{1}{|p{7.2cm}|}{\centering 11222100+11122210+11122111+01122221} &  \multicolumn{1}{p{1.56cm}|}{\centering 22} &  \multicolumn{1}{p{1.72cm}|}{\centering 00000100} \\
 \cline{1-1} \cline{2-2} \cline{3-3}
 \multicolumn{1}{|p{7.2cm}|}{\centering 11222110+11122210+11221111+11122111} &  \multicolumn{1}{p{1.56cm}|}{\centering 22} &  \multicolumn{1}{p{1.72cm}|}{\centering 00000100} \\
 \cline{1-1} \cline{2-2} \cline{3-3}
 \multicolumn{1}{|p{7.2cm}|}{\centering 11222110+11221111+11122211+01122221} &  \multicolumn{1}{p{1.56cm}|}{\centering 22} &  \multicolumn{1}{p{1.72cm}|}{\centering 00000100} \\
 \cline{1-1} \cline{2-2} \cline{3-3}
 \multicolumn{1}{|p{7.2cm}|}{\centering 11222110+11122211+01122221} &  \multicolumn{1}{p{1.56cm}|}{\centering 21} &  \multicolumn{1}{p{1.72cm}|}{\centering 10000001} \\
 \cline{1-1} \cline{2-2} \cline{3-3}
 \multicolumn{1}{|p{7.2cm}|}{\centering 11222100+11221111+11122221} &  \multicolumn{1}{p{1.56cm}|}{\centering 20} &  \multicolumn{1}{p{1.72cm}|}{\centering 10000001} \\
 \cline{1-1} \cline{2-2} \cline{3-3}
 \multicolumn{1}{|p{7.2cm}|}{\centering 11222210+11222111+11122211+01122221} &  \multicolumn{1}{p{1.56cm}|}{\centering 19} &  \multicolumn{1}{p{1.72cm}|}{\centering 01000000} \\
 \cline{1-1} \cline{2-2} \cline{3-3}
 \multicolumn{1}{|p{7.2cm}|}{\centering 11222100+11122221} &  \multicolumn{1}{p{1.56cm}|}{\centering 18} &  \multicolumn{1}{p{1.72cm}|}{\centering 00000002} \\
 \cline{1-1} \cline{2-2} \cline{3-3}
 \multicolumn{1}{|p{7.2cm}|}{\centering 11222210+11222111+11122221} &  \multicolumn{1}{p{1.56cm}|}{\centering 17} &  \multicolumn{1}{p{1.72cm}|}{\centering 00000010} \\
 \cline{1-1} \cline{2-2} \cline{3-3}
 \multicolumn{1}{|p{7.2cm}|}{\centering 11222111+11122211+01122221} &  \multicolumn{1}{p{1.56cm}|}{\centering 16} &  \multicolumn{1}{p{1.72cm}|}{\centering 00000010} \\
 \cline{1-1} \cline{2-2} \cline{3-3}
 \multicolumn{1}{|p{7.2cm}|}{\centering 11222210+11122211+01122221} &  \multicolumn{1}{p{1.56cm}|}{\centering 15} &  \multicolumn{1}{p{1.72cm}|}{\centering 00000010} \\
 \cline{1-1} \cline{2-2} \cline{3-3}
 \multicolumn{1}{|p{7.2cm}|}{\centering 11222211+11122221} &  \multicolumn{1}{p{1.56cm}|}{\centering 14} &  \multicolumn{1}{p{1.72cm}|}{\centering 10000000} \\
 \cline{1-1} \cline{2-2} \cline{3-3}
 \multicolumn{1}{|p{7.2cm}|}{\centering 11222210+11222111} &  \multicolumn{1}{p{1.56cm}|}{\centering 12} &  \multicolumn{1}{p{1.72cm}|}{\centering 10000000} \\
 \cline{1-1} \cline{2-2} \cline{3-3}
 \multicolumn{1}{|p{7.2cm}|}{\centering 11222221} &  \multicolumn{1}{p{1.56cm}|}{\centering 9} &  \multicolumn{1}{p{1.72cm}|}{\centering 00000001} \\
 \cline{1-1} \cline{2-2} \cline{3-3}
 \multicolumn{1}{|p{7.2cm}|}{\centering 00000000} &  \multicolumn{1}{p{1.56cm}|}{\centering 0} &  \multicolumn{1}{p{1.72cm}|}{\centering 0000000} \\
\hline
\end{tabular}\end{center}

The action on $\frak u_3$ is the tensor product of the standard action of $SL(2)$ with the exterior square representation of $SL(5)$.  It occurs for $E_6$, node 3, whose orbits were listed in section~\ref{sec:E7node3}.  The orbits here are the following:
\begin{center}\begin{tabular}{p{6.41cm}p{1.72cm}p{2cm}}
 \cline{1-1} \cline{2-2} \cline{3-3}
 \multicolumn{1}{|p{6.41cm}|}{\centering Orbit Basepoint} &  \multicolumn{1}{p{1.72cm}|}{\centering Dimension} &  \multicolumn{1}{p{2cm}|}{\centering Coadjoint orbit intersected} \\
\cline{1-1} \cline{2-2} \cline{3-3}
 \multicolumn{1}{|p{6.41cm}|}{\centering 11233211+11232221+12233210+12232211} &  \multicolumn{1}{p{1.72cm}|}{\centering 20} &  \multicolumn{1}{p{2cm}|}{\centering 00000100} \\
 \cline{1-1} \cline{2-2} \cline{3-3}
 \multicolumn{1}{|p{6.41cm}|}{\centering 11233211+12233210+12232221} &  \multicolumn{1}{p{1.72cm}|}{\centering 18} &  \multicolumn{1}{p{2cm}|}{\centering 10000001} \\
 \cline{1-1} \cline{2-2} \cline{3-3}
 \multicolumn{1}{|p{6.41cm}|}{\centering 11233211+12232221} &  \multicolumn{1}{p{1.72cm}|}{\centering 16} &  \multicolumn{1}{p{2cm}|}{\centering 00000002} \\
 \cline{1-1} \cline{2-2} \cline{3-3}
 \multicolumn{1}{|p{6.41cm}|}{\centering 11233321+12233211+12232221} &  \multicolumn{1}{p{1.72cm}|}{\centering 15} &  \multicolumn{1}{p{2cm}|}{\centering 00000010} \\
 \cline{1-1} \cline{2-2} \cline{3-3}
 \multicolumn{1}{|p{6.41cm}|}{\centering 11233321+12233221} &  \multicolumn{1}{p{1.72cm}|}{\centering 12} &  \multicolumn{1}{p{2cm}|}{\centering 10000000} \\
 \cline{1-1} \cline{2-2} \cline{3-3}
 \multicolumn{1}{|p{6.41cm}|}{\centering 12233211+12232221} &  \multicolumn{1}{p{1.72cm}|}{\centering 11} &  \multicolumn{1}{p{2cm}|}{\centering 10000000} \\
 \cline{1-1} \cline{2-2} \cline{3-3}
 \multicolumn{1}{|p{6.41cm}|}{\centering 12233321} &  \multicolumn{1}{p{1.72cm}|}{\centering 8} &  \multicolumn{1}{p{2cm}|}{\centering 00000001} \\
 \cline{1-1} \cline{2-2} \cline{3-3}
 \multicolumn{1}{|p{6.41cm}|}{\centering 00000000} &  \multicolumn{1}{p{1.72cm}|}{\centering 0} &  \multicolumn{1}{p{2cm}|}{\centering 00000000} \\
 \cline{1-1} \cline{2-2} \cline{3-3}
\end{tabular}\end{center}
The action on $\frak u_4$ is the tensor product of the standard actions of $SL(3)$ and $SL(5)$, and arises for $SL(8)$, node 3.  It has 4 orbits, classified by rank.  They have dimensions 15, 12, 7, and 0, with respective basepoints 0011111 + 0111110 + 1111100, 0111111 + 1111110, 1111111, and 0000000 (in $\frak a_7$). It has the following orbits in $\frak u_4$:
\begin{center}\begin{tabular}{|cp{1.66cm}||p{2.66cm}||}
\hline
 \multicolumn{1}{|c|}{Orbit Basepoint} &  \multicolumn{1}{p{1.66cm}|}{\centering Dimension} &  \multicolumn{1}{p{2.66cm}|}{\centering Coadjoint orbit intersected} \\
\hline
 \multicolumn{1}{|c|}{22343221+12343321+12244321} &  \multicolumn{1}{p{1.66cm}|}{\centering 15} &  \multicolumn{1}{p{2.66cm}|}{\centering 00000010} \\
 \cline{1-1} \cline{2-2} \cline{3-3}
 \multicolumn{1}{|c|}{22343321+12344321} &  \multicolumn{1}{p{1.66cm}|}{\centering 12} &  \multicolumn{1}{p{2.66cm}|}{\centering 10000000} \\
 \cline{1-1} \cline{2-2} \cline{3-3}
 \multicolumn{1}{|c|}{22344321} &  \multicolumn{1}{p{1.66cm}|}{\centering 7} &  \multicolumn{1}{p{2.66cm}|}{\centering 00000001} \\
 \cline{1-1} \cline{2-2} \cline{3-3}
 \multicolumn{1}{|c|}{00000000} &  \multicolumn{1}{p{1.66cm}|}{\centering 0} &  \multicolumn{1}{p{2.66cm}|}{\centering 00000000} \\
\hline
\end{tabular}\end{center}

The action of $\frak u_5$ is the tensor product of the standard actions of $SL(3)$ and $SL(2)$, and occurs for $SL(5)$, node 2.  It has 3 orbits, classified by rank.  They have dimensions 6, 4, and 0, with respective basepoints 0111 + 1110, 1111, and 0000 there.  The orbits here are as follows:
\begin{center}\begin{tabular}{|cp{1.66cm}||p{2.78cm}||}
\hline
 \multicolumn{1}{|c|}{Orbit Basepoint} &  \multicolumn{1}{p{1.66cm}|}{\centering Dimension} &  \multicolumn{1}{p{2.78cm}|}{\centering Coadjoint orbit intersected} \\
\hline
 \multicolumn{1}{|c|}{23354321+22454321} &  \multicolumn{1}{p{1.66cm}|}{\centering 6} &  \multicolumn{1}{p{2.78cm}|}{\centering 10000000} \\
 \cline{1-1} \cline{2-2} \cline{3-3}
 \multicolumn{1}{|c|}{23454321} &  \multicolumn{1}{p{1.66cm}|}{\centering 4} &  \multicolumn{1}{p{2.78cm}|}{\centering 00000001} \\
 \cline{1-1} \cline{2-2} \cline{3-3}
 \multicolumn{1}{|c|}{00000000} &  \multicolumn{1}{p{1.66cm}|}{\centering 0} &  \multicolumn{1}{p{2.78cm}|}{\centering 00000000} \\
\hline
\end{tabular}\end{center}

The action of $\frak u_6$ is the standard action of $GL(5)$, and has two orbits:~zero and nonzero.

\subsubsection{$E_8$ Node 5}\label{sec:E8node5}

Here $P=LU$ where $U$ is a 104-dimensional 5-step nilpotent group and $\frak u=\frak u_1\oplus \frak u_2\oplus \frak u_3\oplus \frak u_4\oplus \frak u_5$, with $\dim\frak u_1=40$, $\dim\frak u_2=30$, $\dim\frak u_3=20$,   $\dim\frak u_4=10$, and $\dim\frak u_5=4$.  The semisimple part $[L,L]$ of $L$ is of type $SL(5)\times SL(4)$, which acts on $\frak u_1$ as the tensor product of the exterior square representation of the $SL(5)$ factor with the standard representation of the $SL(4)$ factor.  The orbits are given as follows:

\begin{center}\begin{tabular}{p{7.4cm}p{1.53cm}p{1.88cm}}
 \cline{1-1} \cline{2-2} \cline{3-3}
 \multicolumn{1}{|p{7.4cm}|}{\centering Orbit Basepoint} &  \multicolumn{1}{p{1.5cm}|}{\centering Dimension} &  \multicolumn{1}{p{1.88cm}|}{\centering Coadjoint orbit intersected} \\
\cline{1-1} \cline{2-2} \cline{3-3}
 \multicolumn{1}{|p{7.4cm}|}{\centering $\nrel{00001000+00011110+01011111+01111110+}{+01121100+10111100+11121000+11221111}$} &  \multicolumn{1}{p{1.5cm}|}{\centering 40} &  \multicolumn{1}{p{1.88cm}|}{\centering 00002000} \\
 \cline{1-1} \cline{2-2} \cline{3-3}
 \multicolumn{1}{|p{7.4cm}|}{\centering $\nrel{00011110+00111100+01011111+01111110+}{+10111110+11111100+11221000}$} &  \multicolumn{1}{p{1.5cm}|}{\centering 39} &  \multicolumn{1}{p{1.88cm}|}{\centering 00010100} \\
 \cline{1-1} \cline{2-2} \cline{3-3}
 \multicolumn{1}{|p{7.4cm}|}{\centering $\nrel{00011111+00111110+01011110+01111111+}{+01121100+11111100+11221000}$} &  \multicolumn{1}{p{1.5cm}|}{\centering 38} &  \multicolumn{1}{p{1.88cm}|}{\centering 10001010} \\
 \cline{1-1} \cline{2-2} \cline{3-3}
 \multicolumn{1}{|p{7.4cm}|}{\centering $\nrel{00111111+01011110+01121000+}{+10111110+11111100+11121000}$} &  \multicolumn{1}{p{1.5cm}|}{\centering 38} &  \multicolumn{1}{p{1.88cm}|}{\centering 01100010} \\
 \cline{1-1} \cline{2-2} \cline{3-3}
 \multicolumn{1}{|p{7.4cm}|}{\centering $\nrel{00011111+01011110+01111110+01121100+}{+10111110+11111100+11221000}$} &  \multicolumn{1}{p{1.5cm}|}{\centering 37} &  \multicolumn{1}{p{1.88cm}|}{\centering 00100101} \\
 \cline{1-1} \cline{2-2} \cline{3-3}
 \multicolumn{1}{|p{7.4cm}|}{\centering $\nrel{00111111+01011111+01111110+}{+01121100+10111100+11121000}$} &  \multicolumn{1}{p{1.5cm}|}{\centering 37} &  \multicolumn{1}{p{1.88cm}|}{\centering 10010001} \\
 \cline{1-1} \cline{2-2} \cline{3-3}
 \multicolumn{1}{|p{7.4cm}|}{\centering $\nrel{00011111+00111110+01011110+}{+01111111+11111100+11221000}$} &  \multicolumn{1}{p{1.5cm}|}{\centering 36} &  \multicolumn{1}{p{1.88cm}|}{\centering 20000020} \\
 \cline{1-1} \cline{2-2} \cline{3-3}
 \multicolumn{1}{|p{7.4cm}|}{\centering $\nrel{00011111+01111110+01121100+}{+10111110+11111100+11221000}$} &  \multicolumn{1}{p{1.5cm}|}{\centering 36} &  \multicolumn{1}{p{1.88cm}|}{\centering 02000002} \\
 \cline{1-1} \cline{2-2} \cline{3-3}
 \multicolumn{1}{|p{7.4cm}|}{\centering $\nrel{00111111+01011111+01111110+01121100+}{+10111110+11111100+11121000}$} &  \multicolumn{1}{p{1.5cm}|}{\centering 36} &  \multicolumn{1}{p{1.88cm}|}{\centering 00010010} \\
\hline
 \multicolumn{1}{|p{7.4cm}|}{\centering $\nrel{00111111+01011111+01111110+01121100+}{+10111110+11111100+11221000}$} &  \multicolumn{1}{p{1.5cm}|}{\centering 35} &  \multicolumn{1}{p{1.88cm}|}{\centering 00100100} \\
\hline
 \multicolumn{1}{|p{7.4cm}|}{\centering $\nrel{00011111+01011110+01111110+}{+01121100+11111100+11221000}$} &  \multicolumn{1}{p{1.5cm}|}{\centering 35} &  \multicolumn{1}{p{1.88cm}|}{\centering 00010002} \\
 \cline{1-1} \cline{2-2} \cline{3-3}
 \multicolumn{1}{|p{7.4cm}|}{\centering $\nrel{00111111+01011111+01121110+}{+10111100+11111000}$} &  \multicolumn{1}{p{1.5cm}|}{\centering 35} &  \multicolumn{1}{p{1.88cm}|}{\centering 20000101} \\
 \cline{1-1} \cline{2-2} \cline{3-3}
 \multicolumn{1}{|p{7.4cm}|}{\centering $\nrel{00001111+01121110+10111110+}{+11111100+11121100+11221000}$} &  \multicolumn{1}{p{1.5cm}|}{\centering 35} &  \multicolumn{1}{p{1.88cm}|}{\centering 00010002} \\
 \cline{1-1} \cline{2-2} \cline{3-3}
 \multicolumn{1}{|p{7.4cm}|}{\centering $\nrel{00001111+01121110+11111100+}{+11121100+11221000}$} &  \multicolumn{1}{p{1.5cm}|}{\centering 34} &  \multicolumn{1}{p{1.88cm}|}{\centering 10000102} \\
 \cline{1-1} \cline{2-2} \cline{3-3}
 \multicolumn{1}{|p{7.4cm}|}{\centering $\nrel{00111111+01111110+01121100+}{+10111110+11111100+11121000}$} &  \multicolumn{1}{p{1.5cm}|}{\centering 34} &  \multicolumn{1}{p{1.88cm}|}{\centering 00000200} \\
 \cline{1-1} \cline{2-2} \cline{3-3}
 \multicolumn{1}{|p{7.4cm}|}{\centering $\nrel{00111111+01011111+01121100+}{+10111110+11111100+11221000}$} &  \multicolumn{1}{p{1.5cm}|}{\centering 34} &  \multicolumn{1}{p{1.88cm}|}{\centering 00000200} \\
 \cline{1-1} \cline{2-2} \cline{3-3}
 \multicolumn{1}{|p{7.4cm}|}{\centering $\nrel{00111111+01011111+01111110+}{+01121100+10111110+11221000}$} &  \multicolumn{1}{p{1.5cm}|}{\centering 34} &  \multicolumn{1}{p{1.88cm}|}{\centering 00010001} \\
 \cline{1-1} \cline{2-2} \cline{3-3}
\end{tabular}\end{center}
\begin{center}\begin{tabular}{p{7.31cm}p{1.5cm}p{1.88cm}}
 \cline{1-1} \cline{2-2} \cline{3-3}
 \multicolumn{1}{|p{7.31cm}|}{\centering $\nrel{\text{Orbit Basepoint}}{\text{(Continued)}}$} &  \multicolumn{1}{p{1.5cm}|}{\centering Dimension} &  \multicolumn{1}{p{1.88cm}|}{\centering Coadjoint orbit intersected} \\
\cline{1-1} \cline{2-2} \cline{3-3}
 \multicolumn{1}{|p{7.31cm}|}{\centering $\nrel{01011111+01111110+01121100+}{+10111111+11111100+11121000}$} &  \multicolumn{1}{p{1.5cm}|}{\centering 34} &  \multicolumn{1}{p{1.88cm}|}{\centering 00010001} \\
 \cline{1-1} \cline{2-2} \cline{3-3}
 \multicolumn{1}{|p{7.31cm}|}{\centering $\nrel{00111111+01011111+01121100+}{+11111110+11121000}$} &  \multicolumn{1}{p{1.5cm}|}{\centering 33} &  \multicolumn{1}{p{1.88cm}|}{\centering 10000101} \\
 \cline{1-1} \cline{2-2} \cline{3-3}
 \multicolumn{1}{|p{7.31cm}|}{\centering $\nrel{01111110+01121100+10111111+}{+11111100+11121000}$} &  \multicolumn{1}{p{1.5cm}|}{\centering 33} &  \multicolumn{1}{p{1.88cm}|}{\centering 10000101} \\
 \cline{1-1} \cline{2-2} \cline{3-3}
 \multicolumn{1}{|p{7.31cm}|}{\centering $\nrel{00111111+01011111+01111110+}{+10111110+11121100+11221000}$} &  \multicolumn{1}{p{1.5cm}|}{\centering 33} &  \multicolumn{1}{p{1.88cm}|}{\centering 10001000} \\
 \cline{1-1} \cline{2-2} \cline{3-3}
 \multicolumn{1}{|p{7.31cm}|}{\centering $\nrel{00111111+01011111+01111110+}{+01121100+10111110+11111100}$} &  \multicolumn{1}{p{1.5cm}|}{\centering 33} &  \multicolumn{1}{p{1.88cm}|}{\centering 00010000} \\
 \cline{1-1} \cline{2-2} \cline{3-3}
 \multicolumn{1}{|p{7.31cm}|}{\centering $\nrel{00001111+01121110+11111110+}{+11121100+11221000}$} &  \multicolumn{1}{p{1.5cm}|}{\centering 32} &  \multicolumn{1}{p{1.88cm}|}{\centering 01000012} \\
 \cline{1-1} \cline{2-2} \cline{3-3}
 \multicolumn{1}{|p{7.31cm}|}{\centering 00111111+01121100+11111110+11121000} &  \multicolumn{1}{p{1.5cm}|}{\centering 32} &  \multicolumn{1}{p{1.88cm}|}{\centering 20000002} \\
 \cline{1-1} \cline{2-2} \cline{3-3}
 \multicolumn{1}{|p{7.31cm}|}{\centering $\nrel{00111111+01011111+01111110+}{+10111111+11121100+11221000}$} &  \multicolumn{1}{p{1.5cm}|}{\centering 32} &  \multicolumn{1}{p{1.88cm}|}{\centering 02000000} \\
 \cline{1-1} \cline{2-2} \cline{3-3}
 \multicolumn{1}{|p{7.31cm}|}{\centering $\nrel{00111111+01011111+01111110+}{+10111110+11121100}$} &  \multicolumn{1}{p{1.5cm}|}{\centering 32} &  \multicolumn{1}{p{1.88cm}|}{\centering 10000100} \\
 \cline{1-1} \cline{2-2} \cline{3-3}
 \multicolumn{1}{|p{7.31cm}|}{\centering $\nrel{00111111+01011111+01111110+}{+10111111+11121100}$} &  \multicolumn{1}{p{1.5cm}|}{\centering 31} &  \multicolumn{1}{p{1.88cm}|}{\centering 01000010} \\
 \cline{1-1} \cline{2-2} \cline{3-3}
 \multicolumn{1}{|p{7.31cm}|}{\centering $\nrel{00111111+01111110+10111111+}{+11121100+11221000}$} &  \multicolumn{1}{p{1.5cm}|}{\centering 31} &  \multicolumn{1}{p{1.88cm}|}{\centering 01000010} \\
 \cline{1-1} \cline{2-2} \cline{3-3}
 \multicolumn{1}{|p{7.31cm}|}{\centering $\nrel{00111111+01011111+01121110+}{+11111110+11121100+11221000}$} &  \multicolumn{1}{p{1.5cm}|}{\centering 31} &  \multicolumn{1}{p{1.88cm}|}{\centering 00010000} \\
 \cline{1-1} \cline{2-2} \cline{3-3}
 \multicolumn{1}{|p{7.31cm}|}{\centering $\nrel{01011111+01121110+10111111+}{+11111100+11221000}$} &  \multicolumn{1}{p{1.5cm}|}{\centering 31} &  \multicolumn{1}{p{1.88cm}|}{\centering 10000100} \\
 \cline{1-1} \cline{2-2} \cline{3-3}
 \multicolumn{1}{|p{7.31cm}|}{\centering $\nrel{00111111+01011111+01121110+}{+10111110+11221100}$} &  \multicolumn{1}{p{1.5cm}|}{\centering 30} &  \multicolumn{1}{p{1.88cm}|}{\centering 00100001} \\
 \cline{1-1} \cline{2-2} \cline{3-3}
 \multicolumn{1}{|p{7.31cm}|}{\centering 00111111+01111110+10111111+11121100} &  \multicolumn{1}{p{1.5cm}|}{\centering 30} &  \multicolumn{1}{p{1.88cm}|}{\centering 00000020} \\
 \cline{1-1} \cline{2-2} \cline{3-3}
 \multicolumn{1}{|p{7.31cm}|}{\centering $\nrel{01111111+01121110+10111110+}{+11121100+11221000}$} &  \multicolumn{1}{p{1.5cm}|}{\centering 30} &  \multicolumn{1}{p{1.88cm}|}{\centering 00100001} \\
 \cline{1-1} \cline{2-2} \cline{3-3}
 \multicolumn{1}{|p{7.31cm}|}{\centering 01111111+01121110+10111110+11121100} &  \multicolumn{1}{p{1.5cm}|}{\centering 29} &  \multicolumn{1}{p{1.88cm}|}{\centering 00000101} \\
 \cline{1-1} \cline{2-2} \cline{3-3}
 \multicolumn{1}{|p{7.31cm}|}{\centering $\nrel{00111111+01011111+11111110+}{+11121100+11221000}$} &  \multicolumn{1}{p{1.5cm}|}{\centering 29} &  \multicolumn{1}{p{1.88cm}|}{\centering 10000100} \\
 \cline{1-1} \cline{2-2} \cline{3-3}
 \multicolumn{1}{|p{7.31cm}|}{\centering $\nrel{01111111+01121110+10111111+}{+11111110+11121100+11221000}$} &  \multicolumn{1}{p{1.5cm}|}{\centering 29} &  \multicolumn{1}{p{1.88cm}|}{\centering 00001000} \\
 \cline{1-1} \cline{2-2} \cline{3-3}
 \multicolumn{1}{|p{7.31cm}|}{\centering $\nrel{01011111+01121110+10111111+}{+11111110+11221100}$} &  \multicolumn{1}{p{1.5cm}|}{\centering 28} &  \multicolumn{1}{p{1.88cm}|}{\centering 10000010} \\
 \cline{1-1} \cline{2-2} \cline{3-3}
 \multicolumn{1}{|p{7.31cm}|}{\centering 01011111+01111110+11121100+11221000} &  \multicolumn{1}{p{1.5cm}|}{\centering 28} &  \multicolumn{1}{p{1.88cm}|}{\centering 20000000} \\
 \cline{1-1} \cline{2-2} \cline{3-3}
\end{tabular}\end{center}

\begin{center}\begin{tabular}{p{7.27cm}p{1.5cm}p{1.88cm}}
 \cline{1-1} \cline{2-2} \cline{3-3}
 \multicolumn{1}{|p{7.27cm}|}{\centering $\nrel{\text{Orbit Basepoint}}{\text{(Continued)}}$} &  \multicolumn{1}{p{1.5cm}|}{\centering Dimension} &  \multicolumn{1}{p{1.88cm}|}{\centering Coadjoint orbit intersected} \\
\cline{1-1} \cline{2-2} \cline{3-3}
 \multicolumn{1}{|p{7.27cm}|}{\centering 01121111+10111110+11111100+11221000} &  \multicolumn{1}{p{1.5cm}|}{\centering 28} &  \multicolumn{1}{p{1.88cm}|}{\centering 00000101} \\
 \cline{1-1} \cline{2-2} \cline{3-3}
 \multicolumn{1}{|p{7.27cm}|}{\centering 00001111+01121110+11121100+11221000} &  \multicolumn{1}{p{1.5cm}|}{\centering 28} &  \multicolumn{1}{p{1.88cm}|}{\centering 00000022} \\
 \cline{1-1} \cline{2-2} \cline{3-3}
 \multicolumn{1}{|p{7.27cm}|}{\centering 00111111+01011111+10111110+11221100} &  \multicolumn{1}{p{1.5cm}|}{\centering 27} &  \multicolumn{1}{p{1.88cm}|}{\centering 00000101} \\
 \cline{1-1} \cline{2-2} \cline{3-3}
 \multicolumn{1}{|p{7.27cm}|}{\centering $\nrel{01111111+01121110+10111111+}{+11121100+11221000}$} &  \multicolumn{1}{p{1.5cm}|}{\centering 27} &  \multicolumn{1}{p{1.88cm}|}{\centering 10000010} \\
 \cline{1-1} \cline{2-2} \cline{3-3}
 \multicolumn{1}{|p{7.27cm}|}{\centering $\nrel{01111111+01121110+11111110+}{+11121100+11221000}$} &  \multicolumn{1}{p{1.5cm}|}{\centering 27} &  \multicolumn{1}{p{1.88cm}|}{\centering 00100000} \\
 \cline{1-1} \cline{2-2} \cline{3-3}
 \multicolumn{1}{|p{7.27cm}|}{\centering 00111111+01011111+11121110+11221100} &  \multicolumn{1}{p{1.5cm}|}{\centering 26} &  \multicolumn{1}{p{1.88cm}|}{\centering 20000000} \\
 \cline{1-1} \cline{2-2} \cline{3-3}
 \multicolumn{1}{|p{7.27cm}|}{\centering 01121111+10111110+11111100} &  \multicolumn{1}{p{1.5cm}|}{\centering 26} &  \multicolumn{1}{p{1.88cm}|}{\centering 10000002} \\
 \cline{1-1} \cline{2-2} \cline{3-3}
 \multicolumn{1}{|p{7.27cm}|}{\centering $\nrel{01111111+01121110+10111111+}{+11111110+11221100}$} &  \multicolumn{1}{p{1.5cm}|}{\centering 26} &  \multicolumn{1}{p{1.88cm}|}{\centering 00100000} \\
 \cline{1-1} \cline{2-2} \cline{3-3}
 \multicolumn{1}{|p{7.27cm}|}{\centering $\nrel{01121111+10111111+11111110+}{+11121100+11221000}$} &  \multicolumn{1}{p{1.5cm}|}{\centering 26} &  \multicolumn{1}{p{1.88cm}|}{\centering 00100000} \\
 \cline{1-1} \cline{2-2} \cline{3-3}
 \multicolumn{1}{|p{7.27cm}|}{\centering 01011111+01121110+11111110+11221100} &  \multicolumn{1}{p{1.5cm}|}{\centering 25} &  \multicolumn{1}{p{1.88cm}|}{\centering 00000100} \\
 \cline{1-1} \cline{2-2} \cline{3-3}
 \multicolumn{1}{|p{7.27cm}|}{\centering 01121111+11111110+11121100+11221000} &  \multicolumn{1}{p{1.5cm}|}{\centering 25} &  \multicolumn{1}{p{1.88cm}|}{\centering 00000100} \\
 \cline{1-1} \cline{2-2} \cline{3-3}
 \multicolumn{1}{|p{7.27cm}|}{\centering 01111111+10111111+11121110+11221100} &  \multicolumn{1}{p{1.5cm}|}{\centering 25} &  \multicolumn{1}{p{1.88cm}|}{\centering 00000100} \\
 \cline{1-1} \cline{2-2} \cline{3-3}
 \multicolumn{1}{|p{7.27cm}|}{\centering 01111111+01121110+10111111+11111110} &  \multicolumn{1}{p{1.5cm}|}{\centering 24} &  \multicolumn{1}{p{1.88cm}|}{\centering 00000100} \\
 \cline{1-1} \cline{2-2} \cline{3-3}
 \multicolumn{1}{|p{7.27cm}|}{\centering 01111111+11121110+11221100} &  \multicolumn{1}{p{1.5cm}|}{\centering 24} &  \multicolumn{1}{p{1.88cm}|}{\centering 10000001} \\
 \cline{1-1} \cline{2-2} \cline{3-3}
 \multicolumn{1}{|p{7.27cm}|}{\centering 01011111+10111111+11221110} &  \multicolumn{1}{p{1.5cm}|}{\centering 22} &  \multicolumn{1}{p{1.88cm}|}{\centering 10000001} \\
 \cline{1-1} \cline{2-2} \cline{3-3}
 \multicolumn{1}{|p{7.27cm}|}{\centering 01121111+11111111+11121110+11221100} &  \multicolumn{1}{p{1.5cm}|}{\centering 22} &  \multicolumn{1}{p{1.88cm}|}{\centering 01000000} \\
 \cline{1-1} \cline{2-2} \cline{3-3}
 \multicolumn{1}{|p{7.27cm}|}{\centering 01011111+11221110} &  \multicolumn{1}{p{1.5cm}|}{\centering 20} &  \multicolumn{1}{p{1.88cm}|}{\centering 00000002} \\
 \cline{1-1} \cline{2-2} \cline{3-3}
 \multicolumn{1}{|p{7.27cm}|}{\centering 10111111+11111110+11121100+11221000} &  \multicolumn{1}{p{1.5cm}|}{\centering 20} &  \multicolumn{1}{p{1.88cm}|}{\centering 01000000} \\
 \cline{1-1} \cline{2-2} \cline{3-3}
 \multicolumn{1}{|p{7.27cm}|}{\centering 11111111+11121110+11221100} &  \multicolumn{1}{p{1.5cm}|}{\centering 19} &  \multicolumn{1}{p{1.88cm}|}{\centering 00000010} \\
 \cline{1-1} \cline{2-2} \cline{3-3}
 \multicolumn{1}{|p{7.27cm}|}{\centering 01121111+11111111+11221110} &  \multicolumn{1}{p{1.5cm}|}{\centering 19} &  \multicolumn{1}{p{1.88cm}|}{\centering 00000010} \\
 \cline{1-1} \cline{2-2} \cline{3-3}
 \multicolumn{1}{|p{7.27cm}|}{\centering 01121111+11121110+11221100} &  \multicolumn{1}{p{1.5cm}|}{\centering 18} &  \multicolumn{1}{p{1.88cm}|}{\centering 00000010} \\
 \cline{1-1} \cline{2-2} \cline{3-3}
 \multicolumn{1}{|p{7.27cm}|}{\centering 11121111+11221110} &  \multicolumn{1}{p{1.5cm}|}{\centering 16} &  \multicolumn{1}{p{1.88cm}|}{\centering 10000000} \\
 \cline{1-1} \cline{2-2} \cline{3-3}
 \multicolumn{1}{|p{7.27cm}|}{\centering 01121111+11111111} &  \multicolumn{1}{p{1.5cm}|}{\centering 13} &  \multicolumn{1}{p{1.88cm}|}{\centering 10000000} \\
 \cline{1-1} \cline{2-2} \cline{3-3}
 \multicolumn{1}{|p{7.27cm}|}{\centering 11221111} &  \multicolumn{1}{p{1.5cm}|}{\centering 10} &  \multicolumn{1}{p{1.88cm}|}{\centering 00000001} \\
 \cline{1-1} \cline{2-2} \cline{3-3}
 \multicolumn{1}{|p{7.27cm}|}{\centering 00000000} &  \multicolumn{1}{p{1.5cm}|}{\centering 0} &  \multicolumn{1}{p{1.88cm}|}{\centering 00000000} \\
 \cline{1-1} \cline{2-2} \cline{3-3}
\end{tabular}\end{center}

The 30-dimensional action of $SL(5)\times SL(4)$ on $\frak u_2$ is the tensor product of the standard action on $SL(5)$ with the exterior square action of $SL(4)$, and occurs for $SO(8,8)$, node 5.  It has 16 orbits, of dimensions 30, 29, 28, 27, 25, 24, 23, 21, 18,  18, 18, 17, 15, 10, 9, and 0, with respective basepoints
     00001211 + 00011111 +00111101 + 00111110 + 01111100 + 11111000,
     00001211 + 00011111 + 00111101 +     01111110 + 11111100,
     00011111 + 00111101 + 01111110 + 11111100,
     00011211 + 00111111 + 01111101 +     01111110 + 11111100,
     00011211 + 00111111 + 01111101 +  11111110,
     00111111 + 01111101 + 01111110 + 11111100,
     00111211 + 01111101 + 11111110,
     00111211 + 01111111 + 11111101 + 11111110,
     01111101 + 11111110,
     00111211 + 01111111 + 11111101,
     00111211 + 01111111 + 11111110,
     01111211 + 11111101 + 11111110,
     01111211 + 11111111,
    11111101 + 11111110, and
     11111211,
   00000000 (in ${\frak{so}}(8,8)$).
     The orbits here are given as follows:
\begin{center}\begin{tabular}{||p{7.53cm}p{1.69cm}||p{1.47cm}||}
\hline
 \multicolumn{1}{|p{7.53cm}|}{\centering Orbit Basepoint} &  \multicolumn{1}{p{1.69cm}|}{\centering Dimension} &  \multicolumn{1}{p{1.47cm}|}{\centering Coadjoint orbit intersected} \\
\hline
 \multicolumn{1}{|p{7.53cm}|}{\centering $\nrel{01122221+11122211+11222210+}{+11222111+11232110+12232100}$} &  \multicolumn{1}{p{1.69cm}|}{\centering 30} &  \multicolumn{1}{p{1.47cm}|}{\centering 00001000} \\
 \cline{1-1} \cline{2-2} \cline{3-3}
 \multicolumn{1}{|p{7.53cm}|}{\centering $\nrel{01122221+11122211+11222210+}{+11232111+12232110}$} &  \multicolumn{1}{p{1.69cm}|}{\centering 29} &  \multicolumn{1}{p{1.47cm}|}{\centering 10000010} \\
 \cline{1-1} \cline{2-2} \cline{3-3}
 \multicolumn{1}{|p{7.53cm}|}{\centering 11122211+11222210+11232111+12232110} &  \multicolumn{1}{p{1.69cm}|}{\centering 28} &  \multicolumn{1}{p{1.47cm}|}{\centering 20000000} \\
 \cline{1-1} \cline{2-2} \cline{3-3}
 \multicolumn{1}{|p{7.53cm}|}{\centering $\nrel{11122221+11222211+11232210+}{+11232111+12232110}$} &  \multicolumn{1}{p{1.69cm}|}{\centering 27} &  \multicolumn{1}{p{1.47cm}|}{\centering 00100000} \\
 \cline{1-1} \cline{2-2} \cline{3-3}
 \multicolumn{1}{|p{7.53cm}|}{\centering 11122221+11222211+11232210+12232111} &  \multicolumn{1}{p{1.69cm}|}{\centering 25} &  \multicolumn{1}{p{1.47cm}|}{\centering 00000100} \\
 \cline{1-1} \cline{2-2} \cline{3-3}
 \multicolumn{1}{|p{7.53cm}|}{\centering 11222211+11232210+11232111+12232110} &  \multicolumn{1}{p{1.69cm}|}{\centering 24} &  \multicolumn{1}{p{1.47cm}|}{\centering 00000100} \\
 \cline{1-1} \cline{2-2} \cline{3-3}
 \multicolumn{1}{|p{7.53cm}|}{\centering 11222221+11232210+12232111} &  \multicolumn{1}{p{1.69cm}|}{\centering 23} &  \multicolumn{1}{p{1.47cm}|}{\centering 10000001} \\
 \cline{1-1} \cline{2-2} \cline{3-3}
 \multicolumn{1}{|p{7.53cm}|}{\centering 11222221+11232211+12232210+12232111} &  \multicolumn{1}{p{1.69cm}|}{\centering 21} &  \multicolumn{1}{p{1.47cm}|}{\centering 01000000} \\
 \cline{1-1} \cline{2-2} \cline{3-3}
 \multicolumn{1}{|p{7.53cm}|}{\centering 11232210+12232111} &  \multicolumn{1}{p{1.69cm}|}{\centering 18} &  \multicolumn{1}{p{1.47cm}|}{\centering 00000002} \\
 \cline{1-1} \cline{2-2} \cline{3-3}
 \multicolumn{1}{|p{7.53cm}|}{\centering 11222221+11232211+12232210} &  \multicolumn{1}{p{1.69cm}|}{\centering 18} &  \multicolumn{1}{p{1.47cm}|}{\centering 00000010} \\
 \cline{1-1} \cline{2-2} \cline{3-3}
 \multicolumn{1}{|p{7.53cm}|}{\centering 11222221+11232211+12232111} &  \multicolumn{1}{p{1.69cm}|}{\centering 18} &  \multicolumn{1}{p{1.47cm}|}{\centering 00000010} \\
 \cline{1-1} \cline{2-2} \cline{3-3}
 \multicolumn{1}{|p{7.53cm}|}{\centering 11232221+12232210+12232111} &  \multicolumn{1}{p{1.69cm}|}{\centering 17} &  \multicolumn{1}{p{1.47cm}|}{\centering 00000010} \\
 \cline{1-1} \cline{2-2} \cline{3-3}
 \multicolumn{1}{|p{7.53cm}|}{\centering 11232221+12232211} &  \multicolumn{1}{p{1.69cm}|}{\centering 15} &  \multicolumn{1}{p{1.47cm}|}{\centering 10000000} \\
 \cline{1-1} \cline{2-2} \cline{3-3}
 \multicolumn{1}{|p{7.53cm}|}{\centering 12232210+12232111} &  \multicolumn{1}{p{1.69cm}|}{\centering 10} &  \multicolumn{1}{p{1.47cm}|}{\centering 10000000} \\
 \cline{1-1} \cline{2-2} \cline{3-3}
 \multicolumn{1}{|p{7.53cm}|}{\centering 12232221} &  \multicolumn{1}{p{1.69cm}|}{\centering 9} &  \multicolumn{1}{p{1.47cm}|}{\centering 00000001} \\
 \cline{1-1} \cline{2-2} \cline{3-3}
 \multicolumn{1}{|p{7.53cm}|}{\centering 00000000} &  \multicolumn{1}{p{1.69cm}|}{\centering 0} &  \multicolumn{1}{p{1.47cm}|}{\centering 00000000} \\
\hline
\end{tabular}\end{center}

The action on $\frak u_3$ is the tensor product of the standard representations of $SL(5)$ and $SL(4)$, and arises for $SL(9)$, node 4.  It has 5 orbits, classified by rank, with dimensions 20, 18, 14, 8, and 0, and respective basepoints 00011111 + 00111110 + 01111100 + 11111000, 00111111 + 01111110 + 11111100, 01111111 + 11111110, 11111111, and 00000000, respectively (in $\frak a_8$).  The orbits here are as follows:
\begin{center}\begin{tabular}{||p{6.66cm}p{1.69cm}||p{1.72cm}||}
\hline
 \multicolumn{1}{|p{6.66cm}|}{\centering Orbit Basepoint} &  \multicolumn{1}{p{1.69cm}|}{\centering Dimension} &  \multicolumn{1}{p{1.72cm}|}{\centering Coadjoint orbit intersected} \\
\hline
 \multicolumn{1}{|p{6.66cm}|}{\centering 12233321+12243221+12343211+22343210} &  \multicolumn{1}{p{1.69cm}|}{\centering 20} &  \multicolumn{1}{p{1.72cm}|}{\centering 01000000} \\
 \cline{1-1} \cline{2-2} \cline{3-3}
 \multicolumn{1}{|p{6.66cm}|}{\centering 12243321+12343221+22343211} &  \multicolumn{1}{p{1.69cm}|}{\centering 18} &  \multicolumn{1}{p{1.72cm}|}{\centering 00000010} \\
 \cline{1-1} \cline{2-2} \cline{3-3}
 \multicolumn{1}{|p{6.66cm}|}{\centering 12343321+22343221} &  \multicolumn{1}{p{1.69cm}|}{\centering 14} &  \multicolumn{1}{p{1.72cm}|}{\centering 10000000} \\
 \cline{1-1} \cline{2-2} \cline{3-3}
 \multicolumn{1}{|p{6.66cm}|}{\centering 22343321} &  \multicolumn{1}{p{1.69cm}|}{\centering 8} &  \multicolumn{1}{p{1.72cm}|}{\centering 00000001} \\
 \cline{1-1} \cline{2-2} \cline{3-3}
 \multicolumn{1}{|p{6.66cm}|}{\centering 00000000} &  \multicolumn{1}{p{1.69cm}|}{\centering 0} &  \multicolumn{1}{p{1.72cm}|}{\centering 00000000} \\
\hline
\end{tabular}\end{center}

The action on $\frak u_4$ is the exterior square representation of $SL(5)$, and arises for $SO(5,5)$, node 5 (see section~\ref{sec:D5node4}). It has three orbits, of dimensions 10, 7, and 0, with respective basepoints 01211 + 11111, 12211, and 00000 there.  The orbits here have the following basepoints:
\begin{center}\begin{tabular}{|cp{1.66cm}||p{2.59cm}||}
\hline
 \multicolumn{1}{|c|}{Orbit Basepoint} &  \multicolumn{1}{p{1.66cm}|}{\centering Dimension} &  \multicolumn{1}{p{2.59cm}|}{\centering Coadjoint orbit intersected} \\
\hline
 \multicolumn{1}{|c|}{22454321+23354321} &  \multicolumn{1}{p{1.66cm}|}{\centering 10} &  \multicolumn{1}{p{2.59cm}|}{\centering 10000000} \\
 \cline{1-1} \cline{2-2} \cline{3-3}
 \multicolumn{1}{|c|}{23464321} &  \multicolumn{1}{p{1.66cm}|}{\centering 7} &  \multicolumn{1}{p{2.59cm}|}{\centering 00000001} \\
 \cline{1-1} \cline{2-2} \cline{3-3}
 \multicolumn{1}{|c|}{00000000} &  \multicolumn{1}{p{1.66cm}|}{\centering 0} &  \multicolumn{1}{p{2.59cm}|}{\centering 00000000} \\
\hline
\end{tabular}\end{center}

The action on $\frak u_5$ is the standard representation of $SL(4)$, and has two orbits:~zero and nonzero.

\subsubsection{$E_8$ Node 6}

Here $P=LU$ where $U$ is a 97-dimensional 4-step nilpotent group and $\frak u=\frak u_1\oplus \frak u_2\oplus \frak u_3\oplus \frak u_4$, with $\dim\frak u_1=48$, $\dim\frak u_2=30$, $\dim\frak u_3=16$,   and $\dim\frak u_4=3$.  The semisimple part $[L,L]$ of $L$ is of type $SO(5,5)\times SL(3)$, which acts on $\frak u_1$ as the tensor product of the spin representation of the $SO(5,5)$ factor with the standard representation of the $SL(3)$ factor.  The orbits are given as follows:

\begin{center}\begin{tabular}{p{7.33cm}p{1.63cm}p{1.75cm}}
 \cline{1-1} \cline{2-2} \cline{3-3}
 \multicolumn{1}{|p{7.33cm}|}{\centering Orbit Basepoint} &  \multicolumn{1}{p{1.63cm}|}{\centering Dimension} &  \multicolumn{1}{p{1.75cm}|}{\centering Coadjoint orbit intersected} \\
\cline{1-1} \cline{2-2} \cline{3-3}
 \multicolumn{1}{|p{7.33cm}|}{\centering $\nrel{00111111+01011111+01122110+}{+10111110+11122100+11221100}$} &  \multicolumn{1}{p{1.63cm}|}{\centering 48} &  \multicolumn{1}{p{1.75cm}|}{\centering 00000200} \\
 \cline{1-1} \cline{2-2} \cline{3-3}
 \multicolumn{1}{|p{7.33cm}|}{\centering $\nrel{01011111+01122110+10111111+}{+11121110+11122100+11221100}$} &  \multicolumn{1}{p{1.63cm}|}{\centering 47} &  \multicolumn{1}{p{1.75cm}|}{\centering 00010001} \\
 \cline{1-1} \cline{2-2} \cline{3-3}
 \multicolumn{1}{|p{7.33cm}|}{\centering $\nrel{00111111+01011111+01122110+}{+10111110+12232100}$} &  \multicolumn{1}{p{1.63cm}|}{\centering 45} &  \multicolumn{1}{p{1.75cm}|}{\centering 10000101} \\
 \cline{1-1} \cline{2-2} \cline{3-3}
 \multicolumn{1}{|p{7.33cm}|}{\centering $\nrel{01121111+01122110+10111111+}{+11111110+11122100+11221100}$} &  \multicolumn{1}{p{1.63cm}|}{\centering 45} &  \multicolumn{1}{p{1.75cm}|}{\centering 10001000} \\
 \cline{1-1} \cline{2-2} \cline{3-3}
 \multicolumn{1}{|p{7.33cm}|}{\centering $\nrel{01111111+01121111+01122110+}{+10111111+11121110+11222100}$} &  \multicolumn{1}{p{1.63cm}|}{\centering 43} &  \multicolumn{1}{p{1.75cm}|}{\centering 02000000} \\
 \cline{1-1} \cline{2-2} \cline{3-3}
 \multicolumn{1}{|p{7.33cm}|}{\centering $\nrel{01121111+01122110+11111111+}{+11121110+11122100+11221100}$} &  \multicolumn{1}{p{1.63cm}|}{\centering 43} &  \multicolumn{1}{p{1.75cm}|}{\centering 00010000} \\
 \cline{1-1} \cline{2-2} \cline{3-3}
 \multicolumn{1}{|p{7.33cm}|}{\centering  00111111+01011111+10111110+12232100} &  \multicolumn{1}{p{1.63cm}|}{\centering 42} &  \multicolumn{1}{p{1.75cm}|}{\centering 20000002} \\
 \cline{1-1} \cline{2-2} \cline{3-3}
 \multicolumn{1}{|p{7.33cm}|}{\centering $\nrel{01121111+01122110+11111111+}{+11121110+11222100}$} &  \multicolumn{1}{p{1.63cm}|}{\centering 42} &  \multicolumn{1}{p{1.75cm}|}{\centering 10000100} \\
 \cline{1-1} \cline{2-2} \cline{3-3}
 \multicolumn{1}{|p{7.33cm}|}{\centering $\nrel{01011111+01122110+10111111+}{+11221110+12232100}$} &  \multicolumn{1}{p{1.63cm}|}{\centering 41} &  \multicolumn{1}{p{1.75cm}|}{\centering 10000100} \\
 \cline{1-1} \cline{2-2} \cline{3-3}
 \multicolumn{1}{|p{7.33cm}|}{\centering $\nrel{01121111+01122111+11111111+}{+11122110+11221100}$} &  \multicolumn{1}{p{1.63cm}|}{\centering 41} &  \multicolumn{1}{p{1.75cm}|}{\centering 01000010} \\
 \cline{1-1} \cline{2-2} \cline{3-3}
 \multicolumn{1}{|p{7.33cm}|}{\centering $\nrel{01121111+01122110+11111111+}{+11221110+11232100}$} &  \multicolumn{1}{p{1.63cm}|}{\centering 40} &  \multicolumn{1}{p{1.75cm}|}{\centering 00100001} \\
 \cline{1-1} \cline{2-2} \cline{3-3}
 \multicolumn{1}{|p{7.33cm}|}{\centering 01121111+01122111+11122110+11221100} &  \multicolumn{1}{p{1.63cm}|}{\centering 39} &  \multicolumn{1}{p{1.75cm}|}{\centering 00000020} \\
 \cline{1-1} \cline{2-2} \cline{3-3}
 \multicolumn{1}{|p{7.33cm}|}{\centering 01122110+11121111+11221110+11222100} &  \multicolumn{1}{p{1.63cm}|}{\centering 38} &  \multicolumn{1}{p{1.75cm}|}{\centering 00000101} \\
 \cline{1-1} \cline{2-2} \cline{3-3}
 \multicolumn{1}{|p{7.33cm}|}{\centering 01121111+01122110+11111111+11232100} &  \multicolumn{1}{p{1.63cm}|}{\centering 37} &  \multicolumn{1}{p{1.75cm}|}{\centering 00000101} \\
 \cline{1-1} \cline{2-2} \cline{3-3}
 \multicolumn{1}{|p{7.33cm}|}{\centering $\nrel{01122111+10111111+11122110+}{+11221110+12232100}$} &  \multicolumn{1}{p{1.63cm}|}{\centering 37} &  \multicolumn{1}{p{1.75cm}|}{\centering 10000010} \\
 \cline{1-1} \cline{2-2} \cline{3-3}
 \multicolumn{1}{|p{7.33cm}|}{\centering 01111111+10111111+11232110+12232100} &  \multicolumn{1}{p{1.63cm}|}{\centering 35} &  \multicolumn{1}{p{1.75cm}|}{\centering 20000000} \\
 \cline{1-1} \cline{2-2} \cline{3-3}
 \multicolumn{1}{|p{7.33cm}|}{\centering $\nrel{01122111+11121111+11122110+}{+11221110+12232100}$} &  \multicolumn{1}{p{1.63cm}|}{\centering 35} &  \multicolumn{1}{p{1.75cm}|}{\centering 00100000} \\
 \cline{1-1} \cline{2-2} \cline{3-3}
 \multicolumn{1}{|p{7.33cm}|}{\centering 00111111+01011111+11122110+11221110} &  \multicolumn{1}{p{1.63cm}|}{\centering 34} &  \multicolumn{1}{p{1.75cm}|}{\centering 20000000} \\
 \cline{1-1} \cline{2-2} \cline{3-3}
 \multicolumn{1}{|p{7.33cm}|}{\centering 01122110+11122100+11221111} &  \multicolumn{1}{p{1.63cm}|}{\centering 34} &  \multicolumn{1}{p{1.75cm}|}{\centering 10000002} \\
 \cline{1-1} \cline{2-2} \cline{3-3}
 \multicolumn{1}{|p{7.33cm}|}{\centering 01122111+11121111+11122110+11221110} &  \multicolumn{1}{p{1.63cm}|}{\centering 33} &  \multicolumn{1}{p{1.75cm}|}{\centering 00000100} \\
 \cline{1-1} \cline{2-2} \cline{3-3}
\end{tabular}\end{center}

\begin{center}\begin{tabular}{p{7.31cm}p{1.63cm}p{1.75cm}}
 \cline{1-1} \cline{2-2} \cline{3-3}
 \multicolumn{1}{|p{7.31cm}|}{\centering $\nrel{\text{Orbit Basepoint}}{\text{(Continued)}}$} &  \multicolumn{1}{p{1.63cm}|}{\centering Dimension} &  \multicolumn{1}{p{1.75cm}|}{\centering Coadjoint orbit intersected} \\
\cline{1-1} \cline{2-2} \cline{3-3}
 \multicolumn{1}{|p{7.31cm}|}{\centering 01122111+11111111+11232110+12232100} &  \multicolumn{1}{p{1.63cm}|}{\centering 33} &  \multicolumn{1}{p{1.75cm}|}{\centering 00000100} \\
 \cline{1-1} \cline{2-2} \cline{3-3}
 \multicolumn{1}{|p{7.31cm}|}{\centering 10111111+11122110+11221110+12232100} &  \multicolumn{1}{p{1.63cm}|}{\centering 32} &  \multicolumn{1}{p{1.75cm}|}{\centering 00000100} \\
 \cline{1-1} \cline{2-2} \cline{3-3}
 \multicolumn{1}{|p{7.31cm}|}{\centering 11111111+11232110+12232100} &  \multicolumn{1}{p{1.63cm}|}{\centering 31} &  \multicolumn{1}{p{1.75cm}|}{\centering 10000001} \\
 \cline{1-1} \cline{2-2} \cline{3-3}
 \multicolumn{1}{|p{7.31cm}|}{\centering 01122111+10111111+12232110} &  \multicolumn{1}{p{1.63cm}|}{\centering 30} &  \multicolumn{1}{p{1.75cm}|}{\centering 10000001} \\
 \cline{1-1} \cline{2-2} \cline{3-3}
 \multicolumn{1}{|p{7.31cm}|}{\centering 11122111+11221111+11232110+12232100} &  \multicolumn{1}{p{1.63cm}|}{\centering 29} &  \multicolumn{1}{p{1.75cm}|}{\centering 01000000} \\
 \cline{1-1} \cline{2-2} \cline{3-3}
 \multicolumn{1}{|p{7.31cm}|}{\centering 10111111+12232110} &  \multicolumn{1}{p{1.63cm}|}{\centering 26} &  \multicolumn{1}{p{1.75cm}|}{\centering 00000002} \\
 \cline{1-1} \cline{2-2} \cline{3-3}
 \multicolumn{1}{|p{7.31cm}|}{\centering 11122111+11221111+12232110} &  \multicolumn{1}{p{1.63cm}|}{\centering 25} &  \multicolumn{1}{p{1.75cm}|}{\centering 00000010} \\
 \cline{1-1} \cline{2-2} \cline{3-3}
 \multicolumn{1}{|p{7.31cm}|}{\centering 11222111+11232110+12232100} &  \multicolumn{1}{p{1.63cm}|}{\centering 25} &  \multicolumn{1}{p{1.75cm}|}{\centering 00000010} \\
 \cline{1-1} \cline{2-2} \cline{3-3}
 \multicolumn{1}{|p{7.31cm}|}{\centering 11232111+12232110} &  \multicolumn{1}{p{1.63cm}|}{\centering 21} &  \multicolumn{1}{p{1.75cm}|}{\centering 10000000} \\
 \cline{1-1} \cline{2-2} \cline{3-3}
 \multicolumn{1}{|p{7.31cm}|}{\centering 11122111+11221111} &  \multicolumn{1}{p{1.63cm}|}{\centering 18} &  \multicolumn{1}{p{1.75cm}|}{\centering 10000000} \\
 \cline{1-1} \cline{2-2} \cline{3-3}
 \multicolumn{1}{|p{7.31cm}|}{\centering 12232111} &  \multicolumn{1}{p{1.63cm}|}{\centering 13} &  \multicolumn{1}{p{1.75cm}|}{\centering 00000001} \\
 \cline{1-1} \cline{2-2} \cline{3-3}
 \multicolumn{1}{|p{7.31cm}|}{\centering 00000000} &  \multicolumn{1}{p{1.63cm}|}{\centering 0} &  \multicolumn{1}{p{1.75cm}|}{\centering 00000000} \\
 \cline{1-1} \cline{2-2} \cline{3-3}
\end{tabular}\end{center}

The action on $\frak u_2$ is the tensor product of the 10-dimensional vector representation of the $SO(5,5)$ factor with the standard representation of $SL(3)$.  It occurs for $SO(8,8)$ node 3, and has   orbits of dimensions 30, 29, 27, 24, 22, 21, 19, 12, 11, and 0, with respective basepoints
     00111111 + 01111101 + 01111110 + 11111100,
     00122211 + 01111101 + 11111110,
     00122211 + 01112211 + 11111101 + 11111110,
     00122211 + 01112211 + 11111211,
     01111101 + 11111110,
     01122211 + 11111101 + 11111110,
     01122211 + 11112211,
     11111101 + 11111110,
          11122211, and
     00000000
(in $\frak{so}(8,8)$).
 The orbits here are as follows:
\begin{center}\begin{tabular}{|cp{1.66cm}||p{1.47cm}||}
\hline
 \multicolumn{1}{|c|}{Orbit Basepoint} &  \multicolumn{1}{p{1.66cm}|}{\centering Dimension} &  \multicolumn{1}{p{1.47cm}|}{\centering Coadjoint orbit intersected} \\
\hline
 \multicolumn{1}{|c|}{12233210+11233211+12232211+11232221} &  \multicolumn{1}{p{1.66cm}|}{\centering 30} &  \multicolumn{1}{p{1.47cm}|}{\centering 00000100} \\
 \cline{1-1} \cline{2-2} \cline{3-3}
 \multicolumn{1}{|c|}{22343210+11233211+12232221} &  \multicolumn{1}{p{1.66cm}|}{\centering 29} &  \multicolumn{1}{p{1.47cm}|}{\centering 10000001} \\
 \cline{1-1} \cline{2-2} \cline{3-3}
 \multicolumn{1}{|c|}{22343210+12343211+11233221+12232221} &  \multicolumn{1}{p{1.66cm}|}{\centering 27} &  \multicolumn{1}{p{1.47cm}|}{\centering 01000000} \\
 \cline{1-1} \cline{2-2} \cline{3-3}
 \multicolumn{1}{|c|}{22343210+12343211+12243221} &  \multicolumn{1}{p{1.66cm}|}{\centering 24} &  \multicolumn{1}{p{1.47cm}|}{\centering 00000010} \\
 \cline{1-1} \cline{2-2} \cline{3-3}
 \multicolumn{1}{|c|}{11233211+12232221} &  \multicolumn{1}{p{1.66cm}|}{\centering 22} &  \multicolumn{1}{p{1.47cm}|}{\centering 00000002} \\
 \cline{1-1} \cline{2-2} \cline{3-3}
 \multicolumn{1}{|c|}{22343211+11233221+12232221} &  \multicolumn{1}{p{1.66cm}|}{\centering 21} &  \multicolumn{1}{p{1.47cm}|}{\centering 00000010} \\
 \cline{1-1} \cline{2-2} \cline{3-3}
 \multicolumn{1}{|c|}{22343211+12343221} &  \multicolumn{1}{p{1.66cm}|}{\centering 19} &  \multicolumn{1}{p{1.47cm}|}{\centering 10000000} \\
 \cline{1-1} \cline{2-2} \cline{3-3}
 \multicolumn{1}{|c|}{11233221+12232221} &  \multicolumn{1}{p{1.66cm}|}{\centering 12} &  \multicolumn{1}{p{1.47cm}|}{\centering 10000000} \\
 \cline{1-1} \cline{2-2} \cline{3-3}
 \multicolumn{1}{|c|}{22343221} &  \multicolumn{1}{p{1.66cm}|}{\centering 11} &  \multicolumn{1}{p{1.47cm}|}{\centering 00000001} \\
 \cline{1-1} \cline{2-2} \cline{3-3}
 \multicolumn{1}{|c|}{00000000} &  \multicolumn{1}{p{1.66cm}|}{\centering 0} &  \multicolumn{1}{p{1.47cm}|}{\centering 00000000} \\
\hline
\end{tabular}\end{center}

The action on $\frak u_3$ is the 16 dimensional spin representation of $SO(5,5)$, and occurs earlier for $E_6$, node 1 (see section~\ref{sec:E6node1}).  It has the following orbits:
\begin{center}\begin{tabular}{|c|p{1.75cm}||p{2.94cm}||}
\hline
 Orbit Basepoint &  \multicolumn{1}{p{1.75cm}|}{\centering Dimension} &  \multicolumn{1}{p{2.94cm}|}{\centering Coadjoint orbit intersected} \\
\hline
 \multicolumn{1}{|c|}{22454321+23354321} &  \multicolumn{1}{p{1.75cm}|}{\centering 16} &  \multicolumn{1}{p{2.94cm}|}{\centering 10000000} \\
 \cline{1-1} \cline{2-2} \cline{3-3}
 \multicolumn{1}{|c|}{23465321} &  \multicolumn{1}{p{1.75cm}|}{\centering 11} &  \multicolumn{1}{p{2.94cm}|}{\centering 00000001} \\
 \cline{1-1} \cline{2-2} \cline{3-3}
 00000000 &  \multicolumn{1}{p{1.75cm}|}{\centering 0} &  \multicolumn{1}{p{2.94cm}|}{\centering 00000000} \\
\hline
\end{tabular}\end{center}

The action on $\frak u_4$ is the 3 dimensional standard representation of $GL(3)$, and has two orbits:~zero and non-zero.

\subsubsection{$E_8$ Node 7}

Here $P=LU$ where $U$ is a 83-dimensional 3-step nilpotent group and $\frak u=\frak u_1\oplus \frak u_2\oplus \frak u_3$, with $\dim\frak u_1=54$, $\dim\frak u_2=27$,      and $\dim\frak u_3=2$.  The semisimple part $[L,L]$ of $L$ is of type $E_6\times SL(2)$, which acts on $\frak u_1$ as the tensor product of the (minimal) 27-dimensional representation of the $E_6$ factor with the standard representation of the $SL(2)$ factor.  The orbits are given as follows:

\begin{center}\begin{tabular}{p{7.44cm}p{1.63cm}p{1.69cm}}
 \cline{1-1} \cline{2-2} \cline{3-3}
 \multicolumn{1}{|p{7.44cm}|}{\centering Orbit Basepoint} &  \multicolumn{1}{p{1.63cm}|}{\centering Dimension} &  \multicolumn{1}{p{1.69cm}|}{\centering Coadjoint orbit intersected} \\
\cline{1-1} \cline{2-2} \cline{3-3}
 \multicolumn{1}{|p{7.44cm}|}{\centering 01122210+01122211+11122111+11221110} &  \multicolumn{1}{p{1.63cm}|}{\centering 54} &  \multicolumn{1}{p{1.69cm}|}{\centering 00000020} \\
 \cline{1-1} \cline{2-2} \cline{3-3}
 \multicolumn{1}{|p{7.44cm}|}{\centering 01122111+10111111+11233210+12232210} &  \multicolumn{1}{p{1.63cm}|}{\centering 53} &  \multicolumn{1}{p{1.69cm}|}{\centering 00000101} \\
 \cline{1-1} \cline{2-2} \cline{3-3}
 \multicolumn{1}{|p{7.44cm}|}{\centering $\nrel{01122211+11122111+11221111+}{+11233210+12232210}$} &  \multicolumn{1}{p{1.63cm}|}{\centering 52} &  \multicolumn{1}{p{1.69cm}|}{\centering 10000010} \\
 \cline{1-1} \cline{2-2} \cline{3-3}
 \multicolumn{1}{|p{7.44cm}|}{\centering 11122111+11221111+11233210+12232210} &  \multicolumn{1}{p{1.63cm}|}{\centering 50} &  \multicolumn{1}{p{1.69cm}|}{\centering 20000000} \\
 \cline{1-1} \cline{2-2} \cline{3-3}
 \multicolumn{1}{|p{7.44cm}|}{\centering 11232211+11233210+12232111+12232210} &  \multicolumn{1}{p{1.63cm}|}{\centering 47} &  \multicolumn{1}{p{1.69cm}|}{\centering 00000100} \\
 \cline{1-1} \cline{2-2} \cline{3-3}
 \multicolumn{1}{|p{7.44cm}|}{\centering 00111111+01011111+22343210} &  \multicolumn{1}{p{1.63cm}|}{\centering 45} &  \multicolumn{1}{p{1.69cm}|}{\centering 10000002} \\
 \cline{1-1} \cline{2-2} \cline{3-3}
 \multicolumn{1}{|p{7.44cm}|}{\centering 01122211+11122111+11221111+22343210} &  \multicolumn{1}{p{1.63cm}|}{\centering 44} &  \multicolumn{1}{p{1.69cm}|}{\centering 00000100} \\
 \cline{1-1} \cline{2-2} \cline{3-3}
 \multicolumn{1}{|p{7.44cm}|}{\centering 01122211+12232111+22343210} &  \multicolumn{1}{p{1.63cm}|}{\centering 43} &  \multicolumn{1}{p{1.69cm}|}{\centering 10000001} \\
 \cline{1-1} \cline{2-2} \cline{3-3}
 \multicolumn{1}{|p{7.44cm}|}{\centering 01122211+22343210} &  \multicolumn{1}{p{1.63cm}|}{\centering 36} &  \multicolumn{1}{p{1.69cm}|}{\centering 00000002} \\
 \cline{1-1} \cline{2-2} \cline{3-3}
 \multicolumn{1}{|p{7.44cm}|}{\centering 11233211+12232211+22343210} &  \multicolumn{1}{p{1.63cm}|}{\centering 35} &  \multicolumn{1}{p{1.69cm}|}{\centering 00000010} \\
 \cline{1-1} \cline{2-2} \cline{3-3}
 \multicolumn{1}{|p{7.44cm}|}{\centering 12343211+22343210} &  \multicolumn{1}{p{1.63cm}|}{\centering 29} &  \multicolumn{1}{p{1.69cm}|}{\centering 10000000} \\
 \cline{1-1} \cline{2-2} \cline{3-3}
 \multicolumn{1}{|p{7.44cm}|}{\centering 01122211+11122111+11221111} &  \multicolumn{1}{p{1.63cm}|}{\centering 28} &  \multicolumn{1}{p{1.69cm}|}{\centering 00000010} \\
 \cline{1-1} \cline{2-2} \cline{3-3}
 \multicolumn{1}{|p{7.44cm}|}{\centering 11233211+12232211} &  \multicolumn{1}{p{1.63cm}|}{\centering 27} &  \multicolumn{1}{p{1.69cm}|}{\centering 10000000} \\
 \cline{1-1} \cline{2-2} \cline{3-3}
 \multicolumn{1}{|p{7.44cm}|}{\centering 22343211} &  \multicolumn{1}{p{1.63cm}|}{\centering 18} &  \multicolumn{1}{p{1.69cm}|}{\centering 00000001} \\
 \cline{1-1} \cline{2-2} \cline{3-3}
 \multicolumn{1}{|p{7.44cm}|}{\centering 00000000} &  \multicolumn{1}{p{1.63cm}|}{\centering 0} &  \multicolumn{1}{p{1.69cm}|}{\centering 00000000} \\
 \cline{1-1} \cline{2-2} \cline{3-3}
\end{tabular}\end{center}

The action on $\frak u_2$ is the 27-dimensional representation of $E_6$, which occurs for $E_7$, node $7$ (see section~\ref{sec:E7node7}).  It has the following orbits here:
\begin{center}\begin{tabular}{||p{5.66cm}p{1.78cm}p{3.29cm}}
 \cline{1-1} \cline{2-2} \cline{3-3}
 \multicolumn{1}{|p{5.66cm}|}{\centering Orbit Basepoint} &  \multicolumn{1}{p{1.78cm}|}{\centering Dimension} &  \multicolumn{1}{p{3.29cm}|}{\centering Coadjoint orbit intersected} \\
\cline{1-1} \cline{2-2} \cline{3-3}
 \multicolumn{1}{|p{5.66cm}|}{\centering 22343221+12343321+12244321} &  \multicolumn{1}{p{1.78cm}|}{\centering 27} &  \multicolumn{1}{p{3.29cm}|}{\centering 00000010} \\
 \cline{1-1} \cline{2-2} \cline{3-3}
 \multicolumn{1}{|p{5.66cm}|}{\centering 22454321+23354321} &  \multicolumn{1}{p{1.78cm}|}{\centering 26} &  \multicolumn{1}{p{3.29cm}|}{\centering 10000000} \\
 \cline{1-1} \cline{2-2} \cline{3-3}
 \multicolumn{1}{|p{5.66cm}|}{\centering 23465421} &  \multicolumn{1}{p{1.78cm}|}{\centering 17} &  \multicolumn{1}{p{3.29cm}|}{\centering 00000001} \\
 \cline{1-1} \cline{2-2} \cline{3-3}
 \multicolumn{1}{|p{5.66cm}|}{\centering 00000000} &  \multicolumn{1}{p{1.78cm}|}{\centering 0} &  \multicolumn{1}{p{3.29cm}|}{\centering 00000000} \\
\hline
\end{tabular}\end{center}

The action on $\frak u_3$ is the standard action of $GL(2)$, and has 2-orbits:~zero and nonzero.

\subsubsection{$E_8$ Node 8}

Here $P=LU$ where $U$ is a 57-dimensional Heisenberg group and $\frak u=\frak u_1\oplus \frak u_2$, with $\dim\frak u_1=56$  and $\dim\frak u_2=1$.  The semisimple part $[L,L]$ of $L$ is of type $E_7$, which acts on $\frak u_1$ as its (minimal) 56-dimensional representation.

\begin{center}\begin{tabular}{p{6.19cm}p{1.66cm}p{2.91cm}}
 \cline{1-1} \cline{2-2} \cline{3-3}
 \multicolumn{1}{|p{6.19cm}|}{\centering Orbit Basepoint} &  \multicolumn{1}{p{1.66cm}|}{\centering Dimension} &  \multicolumn{1}{p{2.91cm}|}{\centering Coadjoint orbit intersected} \\
\cline{1-1} \cline{2-2} \cline{3-3}
 \multicolumn{1}{|p{6.19cm}|}{\centering 01122221+22343211} &  \multicolumn{1}{p{1.66cm}|}{\centering 56} &  \multicolumn{1}{p{2.91cm}|}{\centering 00000002} \\
 \cline{1-1} \cline{2-2} \cline{3-3}
 \multicolumn{1}{|p{6.19cm}|}{\centering 12244321+12343321+22343221} &  \multicolumn{1}{p{1.66cm}|}{\centering 55} &  \multicolumn{1}{p{2.91cm}|}{\centering 00000010} \\
 \cline{1-1} \cline{2-2} \cline{3-3}
 \multicolumn{1}{|p{6.19cm}|}{\centering 22454321+23354321} &  \multicolumn{1}{p{1.66cm}|}{\centering 45} &  \multicolumn{1}{p{2.91cm}|}{\centering 10000000} \\
 \cline{1-1} \cline{2-2} \cline{3-3}
 \multicolumn{1}{|p{6.19cm}|}{\centering 23465431} &  \multicolumn{1}{p{1.66cm}|}{\centering 28} &  \multicolumn{1}{p{2.91cm}|}{\centering 00000001} \\
 \cline{1-1} \cline{2-2} \cline{3-3}
 \multicolumn{1}{|p{6.19cm}|}{\centering 00000000} &  \multicolumn{1}{p{1.66cm}|}{\centering 0} &  \multicolumn{1}{p{2.91cm}|}{\centering 00000000} \\
 \cline{1-1} \cline{2-2} \cline{3-3}
\end{tabular}\end{center}

The action on the one-dimensional piece $\frak u_2$ has two orbits:~zero and nonzero.

\subsection{Type $F_4$}

\begin{center}
\begin{figure}
  \includegraphics[scale=.5]{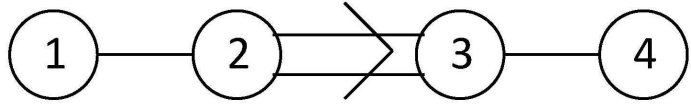}\\
  \caption{$F_4$ Dynkin diagram.}\label{F4Dynkin}
\end{figure}
\end{center}

The following table lists the internal Chevalley modules
that arise for maximal parabolic subgroups of $F_4$.  We also indicate where the higher graded pieces arise earlier, except for those which are the standard actions of $SL(n)$. We write the 3rd fundamental representation of $Sp(6)$ as {\bf 14} (though caution the reader that the 2nd fundamental representation has the same dimension).  The numbering and labeling conventions for the analogous charts for  $D_5$, $E_6$, $E_7$, and $E_8$ remain in effect.

\begin{center}\begin{tabular}{|c|c|c|c|c|c|}
\hline
 Node &  Type of $[L,L]$ &  $i\,=\,1$ &   $i\,=\,2$ &   $i\,=\,3$ &  $i\,=\,4$ \\
\hline
 1 &  \multicolumn{1}{l|}{$Sp(6)$} &  {\bf 14}  & Trivial  &   &  \\
  &  $\dim \frak u_i$ &  14 &  1 &   &  \\
  &  action &  $\varpi_2$ &   &   &  \\
\hline
 2 &  \multicolumn{1}{l|}{$SL(2)\times SL(3)$} &  Standard $\otimes$ Sym$^2$ & $C_3$ node 3  &   &   \\
  &  $\dim \frak u_i$ &  12 &  6 &  2 &  \\
  &  action &  $\varpi_1+2\varpi_4$ & $2\varpi_3    $  & $\varpi$  &  \\
\hline
 3 &  \multicolumn{1}{l|}{$SL(3)\times SL(2)$} &  Tensor product & $B_4$ node 3  &   &   \\
  &  $\dim \frak u_i$ &  6 &  9 &  2 & 3 \\
  &  action &  $\varpi_1+\varpi_4$ & $\varpi_2+2\varpi_4$  & $\varpi_4$  & $\varpi_1$  \\
\hline
 4 &  \multicolumn{1}{l|}{$SO(7)$} &  Spin & $B_4$ node 1  &   &  \\
  &  $\dim \frak u_i$ &  8 &  7 &   &  \\
  &  action &  $\varpi_3$ & $\varpi_1$  &   &  \\
\hline
\end{tabular}\end{center}

\subsubsection{$F_4$ Node 1}

Here $P=LU$ where $U$ is a 15-dimensional Heisenberg group and $\frak u=\frak u_1\oplus \frak u_2$, with $\dim\frak u_1=14$  and $\dim\frak u_2=1$.  The semisimple part $[L,L]$ of $L$ is of type $Sp(6)$ and  acts on $\frak u_1$ as its  14-dimensional representation 3rd fundamental representation (corresponding to the long root).  Its orbits are as follows.
\begin{center}\begin{tabular}{|l|p{2.28cm}||p{4.22cm}||}
\hline
 Orbit Basepoint &  \multicolumn{1}{p{2.28cm}|}{\centering Dimension} &  \multicolumn{1}{p{4.22cm}|}{\centering Coadjoint orbit intersected} \\
\hline
 \multicolumn{1}{|c|}{1122+1220} &  \multicolumn{1}{p{2.28cm}|}{\centering 14} &  \multicolumn{1}{p{4.22cm}|}{\centering 2000} \\
 \cline{1-1} \cline{2-2} \cline{3-3}
 \multicolumn{1}{|c|}{1222+1231} &  \multicolumn{1}{p{2.28cm}|}{\centering 13} &  \multicolumn{1}{p{4.22cm}|}{\centering 0100} \\
 \cline{1-1} \cline{2-2} \cline{3-3}
 \multicolumn{1}{|c|}{1232} &  \multicolumn{1}{p{2.28cm}|}{\centering 10} &  \multicolumn{1}{p{4.22cm}|}{\centering 0001} \\
 \cline{1-1} \cline{2-2} \cline{3-3}
 \multicolumn{1}{|c|}{1342} &  \multicolumn{1}{p{2.28cm}|}{\centering 7} &  \multicolumn{1}{p{4.22cm}|}{\centering 1000} \\
 \cline{1-1} \cline{2-2} \cline{3-3}
 \multicolumn{1}{|c|}{0000}&  \multicolumn{1}{p{2.28cm}|}{\centering 0} &  \multicolumn{1}{p{4.22cm}|}{\centering 0000} \\
\hline
\end{tabular}\end{center}

The action on the one-dimensional piece $\frak u_2$ has two orbits:~zero and nonzero.

\subsubsection{$F_4$ Node 2}

Here $P=LU$ where $U$ is a 20-dimensional 3-step Heisenberg group and $\frak u=\frak u_1\oplus \frak u_2\oplus \frak u_3$, with $\dim\frak u_1=12$, $\dim\frak u_2=6$,  and $\dim\frak u_3=2$.  The semisimple part $[L,L]$ of $L$ is of type $SL(2)\times SL(3)$ and  acts on $\frak u_1$ as the tensor product of the standard representation of the $SL(2)$ factor with the 6-dimensional symmetric square representation of the $SL(3)$ factor.  It has the following orbits:

\begin{center}\begin{tabular}{||p{3.88cm}||p{1.69cm}||p{3.09cm}||}
\hline
 \multicolumn{1}{|p{3.88cm}|}{\centering Orbit Basepoint} &  \multicolumn{1}{p{1.69cm}|}{\centering Dimension} &  \multicolumn{1}{p{3.09cm}|}{\centering Coadjoint orbit intersected} \\
\hline
 \multicolumn{1}{|p{3.88cm}|}{\centering 0100+0121+1111+1120} &  \multicolumn{1}{p{1.69cm}|}{\centering 12} &  \multicolumn{1}{p{3.09cm}|}{\centering 0200} \\
 \cline{1-1} \cline{2-2} \cline{3-3}
 \multicolumn{1}{|p{3.88cm}|}{\centering 0120+0122+1110} &  \multicolumn{1}{p{1.69cm}|}{\centering 11} &  \multicolumn{1}{p{3.09cm}|}{\centering 1010} \\
 \cline{1-1} \cline{2-2} \cline{3-3}
 \multicolumn{1}{|p{3.88cm}|}{\centering 0122+1110} &  \multicolumn{1}{p{1.69cm}|}{\centering 10} &  \multicolumn{1}{p{3.09cm}|}{\centering 2001} \\
 \cline{1-1} \cline{2-2} \cline{3-3}
 \multicolumn{1}{|p{3.88cm}|}{\centering 0121+1111+1120} &  \multicolumn{1}{p{1.69cm}|}{\centering 10} &  \multicolumn{1}{p{3.09cm}|}{\centering 0101} \\
 \cline{1-1} \cline{2-2} \cline{3-3}
 \multicolumn{1}{|p{3.88cm}|}{\centering 0122+1111+1120} &  \multicolumn{1}{p{1.69cm}|}{\centering 9} &  \multicolumn{1}{p{3.09cm}|}{\centering 0010} \\
 \cline{1-1} \cline{2-2} \cline{3-3}
 \multicolumn{1}{|p{3.88cm}|}{\centering 0121+1111} &  \multicolumn{1}{p{1.69cm}|}{\centering 8} &  \multicolumn{1}{p{3.09cm}|}{\centering 0002} \\
 \cline{1-1} \cline{2-2} \cline{3-3}
 \multicolumn{1}{|p{3.88cm}|}{\centering 0122+1120} &  \multicolumn{1}{p{1.69cm}|}{\centering 8} &  \multicolumn{1}{p{3.09cm}|}{\centering 2000} \\
 \cline{1-1} \cline{2-2} \cline{3-3}
 \multicolumn{1}{|p{3.88cm}|}{\centering 1111+1120} &  \multicolumn{1}{p{1.69cm}|}{\centering 7} &  \multicolumn{1}{p{3.09cm}|}{\centering 0100} \\
 \cline{1-1} \cline{2-2} \cline{3-3}
 \multicolumn{1}{|p{3.88cm}|}{\centering 0122+1121} &  \multicolumn{1}{p{1.69cm}|}{\centering 7} &  \multicolumn{1}{p{3.09cm}|}{\centering 0100} \\
 \cline{1-1} \cline{2-2} \cline{3-3}
 \multicolumn{1}{|p{3.88cm}|}{\centering 1121} &  \multicolumn{1}{p{1.69cm}|}{\centering 6} &  \multicolumn{1}{p{3.09cm}|}{\centering 0001} \\
 \cline{1-1} \cline{2-2} \cline{3-3}
 \multicolumn{1}{|p{3.88cm}|}{\centering 1122} &  \multicolumn{1}{p{1.69cm}|}{\centering 4} &  \multicolumn{1}{p{3.09cm}|}{\centering 1000} \\
 \cline{1-1} \cline{2-2} \cline{3-3}
 \multicolumn{1}{|p{3.88cm}|}{\centering 0000} &  \multicolumn{1}{p{1.69cm}|}{\centering 0} &  \multicolumn{1}{p{3.09cm}|}{\centering 0000} \\
\hline
\end{tabular}\end{center}

The symmetric square action of the $SL(3)$ factor   on $\frak u_2$ comes up earlier  for $Sp(6)$, node 3, and has four orbits there of dimensions 6, 5, 3, and 0, with respective basepoints 021 + 111, 121, 221, and 000.  The orbits here are the following:
\begin{center}\begin{tabular}{|l|p{1.78cm}||p{1.94cm}||}
\hline
 Orbit Basepoint &  \multicolumn{1}{p{1.78cm}|}{\centering Dimension} &  \multicolumn{1}{p{1.94cm}|}{\centering Coadjoint orbit intersected} \\
\hline
 \multicolumn{1}{|c|}{1222+1231} &  \multicolumn{1}{p{1.78cm}|}{\centering 6} &  \multicolumn{1}{p{1.94cm}|}{\centering 0100} \\
 \cline{1-1} \cline{2-2} \cline{3-3}
 \multicolumn{1}{|c|}{1232} &  \multicolumn{1}{p{1.78cm}|}{\centering 5} &  \multicolumn{1}{p{1.94cm}|}{\centering 0001} \\
 \cline{1-1} \cline{2-2} \cline{3-3}
 \multicolumn{1}{|c|}{1242} &  \multicolumn{1}{p{1.78cm}|}{\centering 3} &  \multicolumn{1}{p{1.94cm}|}{\centering 1000} \\
 \cline{1-1} \cline{2-2} \cline{3-3}
 \multicolumn{1}{|c|}{0000} &  \multicolumn{1}{p{1.78cm}|}{\centering 0} &  \multicolumn{1}{p{1.94cm}|}{\centering 0000} \\
\hline
\end{tabular}\end{center}
The action on $\frak u_3$ is the standard action of $SL(2)$ has 2 orbits:~zero and nonzero.

\subsubsection{$F_4$ Node 3}

Here $P=LU$ where $U$ is a 20-dimensional 4-step Heisenberg group and $\frak u=\frak u_1\oplus \frak u_2\oplus \frak u_3\oplus \frak u_4$, with $\dim\frak u_1=6$, $\dim\frak u_2=9$,   $\dim\frak u_3=2$, and $\dim\frak u_4=3$.  The semisimple part $[L,L]$ of $L$ is of type $SL(3)\times SL(2)$ and  acts on $\frak u_1$ as the tensor product of the standard representations of the two factors.  It has the following orbits:
\begin{center}\begin{tabular}{|c|p{1.94cm}||p{4.47cm}||}
\hline
 Orbit Basepoint &  \multicolumn{1}{p{1.94cm}|}{\centering Dimension} &  \multicolumn{1}{p{4.47cm}|}{\centering Coadjoint orbit intersected} \\
\hline
 \multicolumn{1}{|c|}{0111+1110} &  \multicolumn{1}{p{1.94cm}|}{\centering 6} &  \multicolumn{1}{p{4.47cm}|}{\centering 0002} \\
 \cline{1-1} \cline{2-2} \cline{3-3}
 \multicolumn{1}{|c|}{1111} &  \multicolumn{1}{p{1.94cm}|}{\centering 4} &  \multicolumn{1}{p{4.47cm}|}{\centering 0001} \\
 \cline{1-1} \cline{2-2} \cline{3-3}
 0000 &  \multicolumn{1}{p{1.94cm}|}{\centering 0} &  \multicolumn{1}{p{4.47cm}|}{\centering 0000} \\
\hline
\end{tabular}\end{center}

The action on $\frak u_2$ is the tensor product of the standard representation of $SL(3)$ with the symmetric square representation of $SL(2)$, and came up earlier for  $SO(5,4)$, node 3.  It has orbits there of dimensions 9, 8, 7, 5, 4, and 0, with respective basepoints 0012 + 0111 + 1110, 0112 + 1110, 0112 + 1111, 1111, 1112, and 0000. It has the following orbits here:
\begin{center}\begin{tabular}{|l|p{2.25cm}||p{4.28cm}||}
\hline
 Orbit Basepoint &  \multicolumn{1}{p{2.25cm}|}{\centering Dimension} &  \multicolumn{1}{p{4.28cm}|}{\centering Coadjoint orbit intersected} \\
\hline
 \multicolumn{1}{|c|}{0122+1121+1220} &  \multicolumn{1}{p{2.25cm}|}{\centering 9} &  \multicolumn{1}{p{4.28cm}|}{\centering 0010} \\
 \cline{1-1} \cline{2-2} \cline{3-3}
 \multicolumn{1}{|c|}{1122+1220} &  \multicolumn{1}{p{2.25cm}|}{\centering 8} &  \multicolumn{1}{p{4.28cm}|}{\centering 2000} \\
 \cline{1-1} \cline{2-2} \cline{3-3}
 \multicolumn{1}{|c|}{1122+1221} &  \multicolumn{1}{p{2.25cm}|}{\centering 7} &  \multicolumn{1}{p{4.28cm}|}{\centering 0100} \\
 \cline{1-1} \cline{2-2} \cline{3-3}
 \multicolumn{1}{|c|}{1221} &  \multicolumn{1}{p{2.25cm}|}{\centering 5} &  \multicolumn{1}{p{4.28cm}|}{\centering 0001} \\
 \cline{1-1} \cline{2-2} \cline{3-3}
 \multicolumn{1}{|c|}{1222} &  \multicolumn{1}{p{2.25cm}|}{\centering 4} &  \multicolumn{1}{p{4.28cm}|}{\centering 1000} \\
 \cline{1-1} \cline{2-2} \cline{3-3}
 \multicolumn{1}{|c|}{0000} &  \multicolumn{1}{p{2.25cm}|}{\centering 0} &  \multicolumn{1}{p{4.28cm}|}{\centering 0000} \\
\hline
\end{tabular}\end{center}
   The standard actions of $GL(2)$ and $GL(3)$, respectively, on $\frak u_3$ and $\frak u_4$ have two orbits each:~zero and nonzero.
\subsubsection{$F_4$ Node 4}

Here $P=LU$ where $U$ is a 15-dimensional 2-step Heisenberg group and $\frak u=\frak u_1\oplus \frak u_2$, with $\dim\frak u_1=6$, $\dim\frak u_2=8$ and  $\dim\frak u_2=7$.  The semisimple part $[L,L]$ of $L$ is of type $SO(7)$ and  acts on $\frak u_1$ as its spin representation, with the following orbits:
\begin{center}\begin{tabular}{|c|p{1.97cm}||p{4.16cm}||}
\hline
 Orbit Basepoint &  \multicolumn{1}{p{1.97cm}|}{\centering Dimension} &  \multicolumn{1}{p{4.16cm}|}{\centering Coadjoint orbit intersected} \\
\hline
 \multicolumn{1}{|c|}{0121+1111} &  \multicolumn{1}{p{1.97cm}|}{\centering 8} &  \multicolumn{1}{p{4.16cm}|}{\centering 0002} \\
 \cline{1-1} \cline{2-2} \cline{3-3}
 \multicolumn{1}{|c|}{1231} &  \multicolumn{1}{p{1.97cm}|}{\centering 7} &  \multicolumn{1}{p{4.16cm}|}{\centering 0001} \\
 \cline{1-1} \cline{2-2} \cline{3-3}
 0000 &  \multicolumn{1}{p{1.97cm}|}{\centering 0} &  \multicolumn{1}{p{4.16cm}|}{\centering 0000} \\
\hline
\end{tabular}\end{center}
The action on $\frak u_2$  is the 7-dimensional vector action, which came up previously for $SO(5,4)$, node 1.  It has orbits there of dimensions 7, 6, and 0, with respective basepoints 1111, 1222, and 0000. Its orbits here are:
\begin{center}\begin{tabular}{||p{2.44cm}||p{1.69cm}||p{4.22cm}||}
\hline
 \multicolumn{1}{|p{2.44cm}|}{\centering Orbit Basepoint} &  \multicolumn{1}{p{1.69cm}|}{\centering Dimension} &  \multicolumn{1}{p{4.22cm}|}{\centering Coadjoint orbit intersected} \\
\hline
 \multicolumn{1}{|p{2.44cm}|}{\centering 1232} &  \multicolumn{1}{p{1.69cm}|}{\centering 7} &  \multicolumn{1}{p{4.22cm}|}{\centering 0001} \\
 \cline{1-1} \cline{2-2} \cline{3-3}
 \multicolumn{1}{|p{2.44cm}|}{\centering 2342} &  \multicolumn{1}{p{1.69cm}|}{\centering 6} &  \multicolumn{1}{p{4.22cm}|}{\centering 1000} \\
 \cline{1-1} \cline{2-2} \cline{3-3}
 \multicolumn{1}{|p{2.44cm}|}{\centering 0000} &  \multicolumn{1}{p{1.69cm}|}{\centering 0} &  \multicolumn{1}{p{4.22cm}|}{\centering 0000} \\
\hline
\end{tabular}\end{center}

\subsection{Type $G_2$}

\begin{center}
  \includegraphics{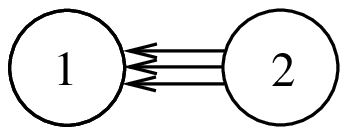} \\
  $G_2$ Dynkin diagram.
\end{center}

The following table lists the internal Chevalley modules
for the two conjugacy classes of maximal parabolic subgroups of $G_2$.
The actions on $\frak u_2$ are trivial, while the action for node 1 on $\frak u_3$ is the   2-dimensional standard representation of $SL(2)$ (it has two orbits:~zero and nonzero).  We list the actions below and the orbits on $\frak u_1$, which are most interesting in the case of the symmetric cube action for node 2.

\begin{center}\begin{tabular}{|c|c|c|c|c|}
\hline
 Node &  Type of $[L,L]$ &  $i\,=\,1$ &  $i\,=\,2$ & $i\,=\,3$ \\
\hline
 1 &  \multicolumn{1}{l|}{$SL(2)$} &  Standard &  Trivial  & Standard  \\
  &  $\dim \frak u_i$ &  2 &  1 & 2 \\
  &  action &  $\varpi$ &   &  ($SL(3)$ node 1) \\
\hline
 2 &  \multicolumn{1}{l|}{$SL(2)$} &  Symmetric cube &  Trivial  &  \\
  &  $\dim \frak u_i$ &  4 &  1 &  \\
  &  action &  $3\varpi$ &   &  \\
\hline
\end{tabular}\end{center}

\begin{center}\begin{tabular}{|cp{2cm}||p{4.22cm}||}
\hline
 \multicolumn{1}{|c|}{\centering Orbit Basepoint} &  \multicolumn{1}{p{2cm}|}{\centering Dimension} &  \multicolumn{1}{p{4.22cm}|}{\centering Coadjoint orbit intersected} \\
\hline
 \multicolumn{2}{|c|}{Node 1 } &  \multicolumn{1}{p{4.22cm}|}{\centering} \\
\hline
 \multicolumn{1}{|c|}{11} &  \multicolumn{1}{p{2cm}|}{\centering 2} &  \multicolumn{1}{p{4.22cm}|}{\centering 10} \\
 \multicolumn{1}{|c|}{00} &  \multicolumn{1}{p{2cm}|}{\centering 0} &  \multicolumn{1}{p{4.22cm}|}{\centering 00} \\
 \multicolumn{1}{|c|}{} &  \multicolumn{1}{p{2cm}|}{\centering} &  \multicolumn{1}{p{4.22cm}|}{\centering} \\
\hline
 \multicolumn{2}{|c|}{Node 2 } &  \multicolumn{1}{p{4.22cm}|}{\centering} \\
\hline
 \multicolumn{1}{|c|}{01+31} &  \multicolumn{1}{p{2cm}|}{\centering 4} &  \multicolumn{1}{p{4.22cm}|}{\centering 02} \\
 \multicolumn{1}{|c|}{21} &  \multicolumn{1}{p{2cm}|}{\centering 3} &  \multicolumn{1}{p{4.22cm}|}{\centering 10} \\
 \multicolumn{1}{|c|}{31} &  \multicolumn{1}{p{2cm}|}{\centering 2} &  \multicolumn{1}{p{4.22cm}|}{\centering 01} \\
 \multicolumn{1}{|c|}{00} &  \multicolumn{1}{p{2cm}|}{\centering 0} &  \multicolumn{1}{p{4.22cm}|}{\centering 00} \\
\hline
\end{tabular}\end{center}

\begin{bibsection}
\begin{biblist}

\bib{artin}{book}{
    author={Artin, E.}, title={Geometric Algebra}, publisher={Interscience Publishers, Inc.},place={New York-London},date={1957},}

\bib{atkinlehner}{article}{
   author={Atkin, A. O. L.},
   author={Lehner, J.},
   title={Hecke operators on $\Gamma \sb{0}(m)$},
   journal={Math. Ann.},
   volume={185},
   date={1970},
   pages={134--160},
   issn={0025-5831},
   review={\MR{0268123 (42 \#3022)}},
}

\bib{bate}{thesis}{
author={Bate, Brandon},
note={Rutgers University Ph.D. thesis, in preparation},
}

\bib{bhargava}{article}{
author={Bhargava, A.}, title={Higher composition laws I: A new view on Gauss composition, and quadratic generalizations}, journal={Annals of Mathematics}, volume={159}, date={2004}, pages={217-250},
}

\bib{borho}{article}{
 author={Borho, W.}, author = {Brylinski, J.L.}, title={Differential operators on homogeneous spaces, I},
 journal={ Invent. Math.}, volume={69}, date={1982}, pages={437-476},
 }

\bib{bumpgl3}{book}{
   author={Bump, Daniel},
   title={Automorphic forms on ${\rm GL}(3,{\bf R})$},
   series={Lecture Notes in Mathematics},
   volume={1083},
   publisher={Springer-Verlag},
   place={Berlin},
   date={1984},
   pages={xi+184},
   isbn={3-540-13864-1},
   review={\MR{765698 (86g:11028)}},
}

\bib{djokovic}{article}{author={Djokovic, D. Z.}, title={Classification of trivectors of an eight-dimensional real vector
space}, journal={Linear Multilinear Algebra}, volume={13},pages={3--39}, date={1983},}

\bib{ginzhund}{article}{
author={Ginzburg, David}, author={Hundley, Joseph}, title={Constructions of global integrals in the exceptional groups}, note ={\url{http://arxiv.org/abs/1108.1401}},
}

\bib{deG}{article}{
   author={de Graaf, Willem A.},
   title={Computing representatives of nilpotent orbits of $\theta$-groups},
   note={\url{http://arxiv.org/abs/0905.3149}},
}

\bib{ggs}{article}{
   author={Gan, Wee Teck},
   author={Gross, Benedict},
   author={Savin, Gordan},
   title={Fourier coefficients of modular forms on $G_2$},
   journal={Duke Math. J.},
   volume={115},
   date={2002},
   number={1},
   pages={105--169},
   issn={0012-7094},
   review={\MR{1932327 (2004a:11036)}},
   doi={10.1215/S0012-7094-02-11514-2},
}

\bib{Gurevich-Sahi}{article}{
author={Gourevitch, Dmitry}, author={Sahi, Siddhartha},
title={Annihilator varieties, highest derivatives, Whittaker functionals, and rank for unitary representations of $GL(n,\R)$}, note={\url{http://arxiv.org/abs/1106.0454}},
}

\bib{GMV}{article}{
author={Green, Michael B.}, author = {Miller, Stephen D.}, author = {Vanhove, Pierre},
title={Small representations, string instantons, and Fourier modes of Eisenstein series (with an appendix by D. Ciubotaru and P. Trapa)},note={\url{http://arxiv.org/abs/1111.2983}}}

\bib{haris}{article}{
author={Haris, Stephen J.}, title={Some Irreducible Representations of Exceptional Algebraic Groups},journal={American Journal of Mathematics}, volume={93}, number={1}, date={1971}, pages={75-106},}

\bib{Hecke}{book}{
    author={Hecke, Erich},
     title={Lectures on Dirichlet series, modular functions and quadratic
            forms},
      note={Edited by Bruno Schoeneberg;
            With the collaboration of Wilhelm Maak},
 publisher={Vandenhoeck \& Ruprecht},
     place={G\"ottingen},
      date={1983},
     pages={98},
      isbn={3-525-40727-0},
    review={MR 85c:11042},
}

\bib{igusa}{article}
{author={Igusa, J.}, title={A classification of spinors up to dimension twelve}, journal={American Journal of
Mathematics}, volume={92},year={1970}, pages={997-1028},}

\bib{jacquetlanglands}{book}{
   author={Jacquet, H.},
   author={Langlands, R. P.},
   title={Automorphic forms on ${\rm GL}(2)$},
   series={Lecture Notes in Mathematics, Vol. 114},
   publisher={Springer-Verlag},
   place={Berlin},
   date={1970},
   pages={vii+548},
   review={\MR{0401654 (53 \#5481)}},
}

\bib{jiang}{article}{
   author={Jiang, Dihua},
   title={Degree $16$ standard $L$-function of ${\rm GSp}(2)\times {\rm
   GSp}(2)$},
   journal={Mem. Amer. Math. Soc.},
   volume={123},
   date={1996},
   number={588},
   pages={viii+196},
   issn={0065-9266},
   review={\MR{1342020 (97d:11081)}},
}

\bib{jiang-rallis}{article}{
   author={Jiang, Dihua},
   author={Rallis, Stephen},
   title={Fourier coefficients of Eisenstein series of the exceptional group
   of type $G_2$},
   journal={Pacific J. Math.},
   volume={181},
   date={1997},
   number={2},
   pages={281--314},
   issn={0030-8730},
   review={\MR{1486533 (99d:11054)}},
}

\bib{joseph}{article}{
 author={Joseph, A.}, title={On the associated variety of a primitive ideal}, journal={Jour. of Algebra}, volume={93}, date={1985}, pages={509-523},
 }

\bib{kazhpoli}{article}{
   author={Kazhdan, D.},
   author={Polishchuk, A.},
   title={Minimal representations: spherical vectors and automorphic
   functionals},
   conference={
      title={Algebraic groups and arithmetic},
   },
   book={
      publisher={Tata Inst. Fund. Res.},
      place={Mumbai},
   },
   date={2004},
   pages={127--198},
   review={\MR{2094111 (2005i:22018)}},
}

\bib{krutelevich}{article}{
author={Krutelevich, S.}, title={Jordan algebras, exceptional groups, and Bhargava composition}, journal={J. Algebra}, volume={314}, date={2007},pages={924-977}, note={arXiv:math/0411104},
}

\bib{jsli}{article}{
   author={Li, Jian-Shu},
   title={Nonexistence of singular cusp forms},
   journal={Compositio Math.},
   volume={83},
   date={1992},
   number={1},
   pages={43--51},
   issn={0010-437X},
   review={\MR{1168122 (93f:11040)}},
}

\bib{littelmann}{article}{
   author={Littelmann, P.},
   title={An effective method to classify nilpotent orbits},
   conference={
      title={Algorithms in algebraic geometry and applications (Santander,
      1994)},
   },
   book={
      series={Progr. Math.},
      volume={143},
      publisher={Birkh\"auser},
      place={Basel},
   },
   date={1996},
   pages={255--269},
   review={\MR{1414453 (98a:20041)}},
}

\bib{Maass}{article}{
    author={Maass, Hans},
     title={\"Uber eine neue Art von nichtanalytischen automorphen
            Funktionen und die Bestimmung Dirichletscher Reihen durch
            Funktionalgleichungen},
  language={German},
   journal={Math. Ann.},
    volume={121},
      date={1949},
     pages={141\ndash 183},
    review={MR 11,163c},
}

\bib{matumoto}{article}{
   author={Matumoto, Hisayosi},
   title={Whittaker vectors and associated varieties},
   journal={Invent. Math.},
   volume={89},
   date={1987},
   number={1},
   pages={219--224},
   issn={0020-9910},
   review={\MR{892192 (88k:17022)}},
   doi={10.1007/BF01404678},
}

\bib{voronoi}{article}{
   author={Miller, Stephen D.},
   author={Schmid, Wilfried},
   title={Automorphic distributions, $L$-functions, and Voronoi summation
   for ${\rm GL}(3)$},
   journal={Ann. of Math. (2)},
   volume={164},
   date={2006},
   number={2},
   pages={423--488},
   issn={0003-486X},
   review={\MR{2247965}},
}

\bib{Mowa}{article}{
   author={M{\oe}glin, C.},
   author={Waldspurger, J.-L.},
   title={Mod\`eles de Whittaker d\'eg\'en\'er\'es pour des groupes
   $p$-adiques},
   language={French},
   journal={Math. Z.},
   volume={196},
   date={1987},
   number={3},
   pages={427--452},
   issn={0025-5874},
   review={\MR{913667 (89f:22024)}},
   doi={10.1007/BF01200363},
}

\bib{psmult}{article}{
   author={Piatetski-Shapiro, I. I.},
   title={Multiplicity one theorems},
   conference={
      title={Automorphic forms, representations and $L$-functions (Proc.
      Sympos. Pure Math., Oregon State Univ., Corvallis, Ore., 1977), Part
      1},
   },
   book={
      series={Proc. Sympos. Pure Math., XXXIII},
      publisher={Amer. Math. Soc.},
      place={Providence, R.I.},
   },
   date={1979},
   pages={209--212},
   review={\MR{546599 (81m:22027)}},
}

\bib{popov}{article}{author={Popov,V. L.}, title={Classification of the spinors of dimension fourteen}, journal={Uspekhi Mat. Nauk},volume={32}, date={1977}, number={1(193) }, pages={199–-200},}

\bib{rrs}{article}{
   author={Richardson, Roger},
   author={R{\"o}hrle, Gerhard},
   author={Steinberg, Robert},
   title={Parabolic subgroups with abelian unipotent radical},
   journal={Invent. Math.},
   volume={110},
   date={1992},
   number={3},
   pages={649--671},
   issn={0020-9910},
   review={\MR{1189494 (93j:20092)}},
   doi={10.1007/BF01231348},
}

\bib{rohrleinternal}{article}{
   author={R{\"o}hrle, Gerhard},
   title={Orbits in internal Chevalley modules},
   conference={
      title={Groups, combinatorics \& geometry},
      address={Durham},
      date={1990},
   },
   book={
      series={London Math. Soc. Lecture Note Ser.},
      volume={165},
      publisher={Cambridge Univ. Press},
      place={Cambridge},
   },
   date={1992},
   pages={311--315},
   review={\MR{1200268}},
}

\bib{rohrle}{article}{
   author={R{\"o}hrle, Gerhard E.},
   title={On extraspecial parabolic subgroups},
   conference={
      title={Linear algebraic groups and their representations (Los Angeles,
      CA, 1992)},
   },
   book={
      series={Contemp. Math.},
      volume={153},
      publisher={Amer. Math. Soc.},
      place={Providence, RI},
   },
   date={1993},
   pages={143--155},
   review={\MR{1247502 (94k:20082)}},
}

\bib{MR1219660}{article}{
   author={R{\"o}hrle, Gerhard E.},
   title={On the structure of parabolic subgroups in algebraic groups},
   journal={J. Algebra},
   volume={157},
   date={1993},
   number={1},
   pages={80--115},
   issn={0021-8693},
   review={\MR{1219660 (94d:20053)}},
   doi={10.1006/jabr.1993.1092},
}

\bib{savin}{article}{
  author={Savin, Gordan},
  title={Dual pair $G_J \times PGL_2$; $G_J$ is the automorphism group of
a Jordan algebra $J$.},
journal={Invent. Math.},volume={118},date={1994}, pages={141--160},
}

\bib{savinwoodbury}{article}{
   author={Savin, Gordan},
   author={Woodbury, Michael},
   title={Structure of internal modules and a formula for the spherical
   vector of minimal representations},
   journal={J. Algebra},
   volume={312},
   date={2007},
   number={2},
   pages={755--772},
   issn={0021-8693},
   review={\MR{2333183 (2008g:20102)}},
   doi={10.1016/j.jalgebra.2007.01.014},
}

\bib{shahidi}{article}{
   author={Shahidi, Freydoon},
   title={A proof of Langlands' conjecture on Plancherel measures;
   complementary series for $p$-adic groups},
   journal={Ann. of Math. (2)},
   volume={132},
   date={1990},
   number={2},
   pages={273--330},
   issn={0003-486X},
   review={\MR{1070599 (91m:11095)}},
   doi={10.2307/1971524},
}

\bib{shalika}{article}{
   author={Shalika, J. A.},
   title={The multiplicity one theorem for ${\rm GL}\sb{n}$},
   journal={Ann. of Math. (2)},
   volume={100},
   date={1974},
   pages={171--193},
   issn={0003-486X},
   review={\MR{0348047 (50 \#545)}},
}

\end{biblist}
\end{bibsection}

\end{document}